\DeclareMathOperator{\Mod}{Mod}
\DeclareMathOperator{\Tor}{Tor} 
\DeclareMathOperator{\VB}{VB} 
\DeclareMathOperator{\Ch}{Ch}
\DeclareMathOperator{\Ab}{Ab}
\DeclareMathOperator{\CB}{CB}
\DeclareMathOperator{\Top}{Top}
\DeclareMathOperator{\Set}{Set}
\DeclareMathOperator{\Fun}{Fun}
\DeclareMathOperator{\Ind}{Ind}
\newcommand{\m}{\to}
\newcommand{\cC}{\mathcal{C}}
\newcommand{\cU}{\mathcal{U}}
\providecommand{\Link}{\ensuremath\mr{Link}}
\newcommand{\Z}{\mathbb{Z}}
\newcommand{\N}{\mathbb{N}_0}
\newcommand{\R}{\mathbb{R}}
\numberwithin{theoremcounter}{section}
\newaliascnt{theoremauto}{theoremcounter}
\newaliascnt{Defauto}{theoremcounter}
\newaliascnt{exampleauto}{theoremcounter}
\newaliascnt{lemmaauto}{theoremcounter}
\newaliascnt{propositionauto}{theoremcounter}
\newaliascnt{corollaryauto}{theoremcounter}
\newaliascnt{remarkauto}{theoremcounter}
\newaliascnt{notationauto}{theoremcounter}
\newaliascnt{claimauto}{theoremcounter}
\newaliascnt{warningauto}{theoremcounter}
\newaliascnt{questionauto}{theoremcounter}
\newaliascnt{discussionauto}{theoremcounter}
\newaliascnt{computationauto}{theoremcounter}
\newaliascnt{conjectureauto}{theoremcounter}
\newaliascnt{convauto}{theoremcounter}
\newtheorem{theorem}[theoremauto]{Theorem}
\newtheorem{lemma}[lemmaauto]{Lemma}
\newtheorem{proposition}[propositionauto]{Proposition}
\newtheorem{corollary}[corollaryauto]{Corollary}
\newtheorem*{corollary*}{Corollary}
\newtheorem{conjecture}[conjectureauto]{Conjecture}
\theoremstyle{definition}
\newtheorem{definition}[Defauto]{Definition}
\newtheorem{notation}[notationauto]{Notation}
\theoremstyle{remark}
\newtheorem{example}[exampleauto]{Example}
\newtheorem{remark}[remarkauto]{Remark}
\newtheorem{question}[questionauto]{Question}
\newcommand{\mr}[1]{{\rm #1}}
\newcommand{\rank}{\mr{rank}}
\newcommand{\Star}{\mr{Star}}
\DeclareMathOperator*{\colim}{colim}
\DeclareMathOperator{\V}{Vert}
\newcommand{\GL}{\mr{GL}}
\newcommand{\St}{\mr{St}}
\newcommand{\Lk}{\mr{Link}}
\newcommand{\jw}[1]{\marginpar{\tiny\textcolor{violet}{jw: #1}}}
\title[On  rank filtrations of algebraic $K$-theory and Steinberg modules]{On  rank filtrations of algebraic $K$-theory\\ and Steinberg modules}
 \author{Jeremy Miller}\thanks{Jeremy Miller was supported in part by NSF Grant DMS-2202943 and a Simons Foundation Collaboration Grant}
 \email{jeremykmiller@purdue.edu}  
\address{Purdue University \\
Department of Mathematics \\
 	 150 N. University \\
 	 West Lafayette IN, 47907 \\USA}
\author{Peter Patzt}
\email{ppatzt@ou.edu}
\address{University of Oklahoma\\
Department of Mathematics \\
 	 601 Elm Av \\
 	 Norman OK, 73019 \\USA}
	 \thanks{Peter Patzt was supported by NSF grant DMS-2405310, a Simons Foundation Collaboration Grant, the Danish National Research Foundation through the Copenhagen Centre
for Geometry and Topology (DNRF151), and the European Research Council under the European Union’s
Seventh Framework Programme ERC Grant agreement ERC StG 716424 - CASe, PI Karim Adiprasito.}
\author{Jennifer C. H. Wilson}
\email{jchw@umich.edu}
\address{University of Michigan \\ Department of Mathematics \\
 	 530 Church St\\
 	 Ann Arbor MI, 48109 \\USA}
\thanks{Jennifer Wilson was supported in part by NSF grant DMS-1906123 and NSF CAREER grant DMS-2142709}
	\def\MR#1{}}
\date{}
\begin{document}
	
\begin{abstract} Motivated by his work on the stable rank filtration of algebraic $K$-theory spectra, Rognes defined a simplicial complex called the common basis complex and conjectured that this complex is highly connected for local rings and Euclidean domains. We prove this conjecture in the case of fields. Our methods give a novel description of this common basis complex of a PID as an iterated bar construction on an equivariant monoid built out of Tits buildings. We also identify the Koszul dual of a certain equivariant ring assembled out of Steinberg modules.

 \end{abstract} 

\maketitle

\tableofcontents


\section{Introduction}

\subsection{The stable rank filtration and Rognes' connectivity conjecture}

Given a PID $R$, let ${\CB}_n(R)$ denote Rognes' common basis complex. That is, ${\CB}_n(R)$ is the simplicial complex with vertices the proper nonzero summands of $R^n$ such that $\{V_0,\dots, V_p\}$ forms a simplex if there is a basis for $R^n$ such that each $V_i$ is a span of a subset of that basis. In this case, we say the set $\{V_0,\dots, V_p\}$ has a \emph{common basis}.   

One of the most popular models for algebraic $K$-theory is Waldhausen's iterated $S_\bullet$-construction \cite{WaldhausenSource}.  Filtering the Waldhausen $S_\bullet$-construction by rank yields a filtration of the algebraic $K$-theory spectrum: 
\[*=\mathcal{F}_0K(R) \subset \mathcal{F}_1K(R) \subset \dots \subset K(R).\]
Rognes  \cite{Rog1} proved that the associated graded of this filtration is the general linear group homotopy orbits of the suspension spectrum of the suspension  of the common basis complex: \[\mathcal{F}_{n}K(F)/\mathcal{F}_{n-1}K(F) \simeq \left(\Sigma^{\infty}\Sigma {\CB}_n(R) \right)_{h\GL_n(R)}.\] This stable rank filtration was a key ingredient in Rognes' proof that $K_4(\Z) \cong 0$ \cite{RognesK4}. Based on calculations for $n=2$ and $3$, Rognes \cite[Conjecture 12.3]{Rog1} made the following connectivity conjecture.

\begin{conjecture}[Rognes' connectivity conjecture]
For $R$ a local ring or Euclidean domain, ${\CB}_n(R)$ is $(2n-4)$-connected.
\end{conjecture}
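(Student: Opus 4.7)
The plan is to prove the conjecture for fields by establishing and then exploiting the iterated bar construction description of $\mathrm{CB}_n(F)$ announced in the abstract. At a high level, the strategy is to express $\mathrm{CB}_n(F)$, up to homotopy, as an iterated bar construction whose building blocks are Tits buildings $T_k(F)$, and then to propagate connectivity through this construction using the Solomon--Tits theorem.

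The first step is to set up the bar construction model. I would organize the proper nonzero summands of $F^n$ into an equivariant (non-unital) monoid $\mathbf{T}$ whose rank $k$ piece is (a desuspension of) the Tits building $T_k(F)$, with multiplication given by direct sum along complementary pairs of summands. A simplex of $\mathrm{CB}_n(F)$ is a set of summands admitting a common basis, which amounts to an iterated decomposition of $F^n$ into summands; this combinatorial structure suggests that the rank $n$ component of an iterated bar construction $B^{(\bullet)} \mathbf{T}$ should be weakly equivalent to $\mathrm{CB}_n(F)$. I would construct an explicit comparison map between the iterated bar construction and (a suitable model of) the common basis complex and verify the equivalence by analyzing the fibers combinatorially.

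The second step is to propagate connectivity. By the Solomon--Tits theorem, $T_k(F)$ is homotopy equivalent to a wedge of $(k-2)$-spheres, hence $(k-3)$-connected; in particular this is $(-1)$-connected when $k=2$. Each application of a bar construction (or equivalently a join with a sphere, via the formula $\mathrm{conn}(X \ast Y) = \mathrm{conn}(X) + \mathrm{conn}(Y) + 2$) boosts connectivity by one relative to its input. Iterating an appropriate spectral sequence or join-connectivity argument over the $(n-1)$ ``levels'' of the bar construction should produce the gap of $n-1$ between the Solomon--Tits bound $n-3$ and the target bound $2n-4$. I would formalize this via an induction on $n$, where the inductive hypothesis supplies the connectivity of $\mathrm{CB}_{n-k}(F)$ feeding into the bar construction at rank $n$.

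The main obstacle will be the first step: choosing the equivariant monoidal framework so that the multiplication faithfully encodes direct sum decompositions, the rank $n$ component of the iterated bar construction recovers $\mathrm{CB}_n(F)$, and the resulting filtration is amenable to connectivity estimates. Delicate points include reconciling the unordered character of a common basis with the ordered bar coordinates, correctly handling the $\mathrm{GL}$-equivariance throughout, and ensuring that the comparison map has highly connected (or contractible) fibers. Once the model is in place, I expect the $(2n-4)$ bound to fall out of the connectivity calculus applied level-by-level, with $\mathbb{F}_2$-type small-field cases in the base $n=2$ handled by a direct argument showing that any two lines in $F^2$ admit a common complementary line.
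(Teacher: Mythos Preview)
Your overall architecture matches the paper: model $\mathrm{CB}_n(F)$ via an iterated bar construction on a monoid built from Tits buildings, then feed in Solomon--Tits. But your connectivity bookkeeping has a real gap. A single bar construction increases connectivity by a constant~$1$ (in the paper's language: if $A(R^n)$ is $(\alpha n+\beta)$-connected for all $n$, then $BA(R^n)$ is $(\alpha n+\beta+1)$-connected). It does \emph{not} change the slope~$\alpha$. Starting from Solomon--Tits, $D^1_n \simeq \Sigma^2 T_n(F)$ is $(n-1)$-connected, i.e.\ slope~$1$; iterating the bar construction any number of times keeps slope~$1$ and will never reach a bound of the form $2n+\text{const}$. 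Your proposed ``$(n-1)$ levels of bar construction'' and the induction on~$n$ using $\mathrm{CB}_{n-k}(F)$ do not produce the jump from slope~$1$ to slope~$2$.

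The missing idea is that the slope doubles at the second stage for a reason specific to fields: any two flags in $F^n$ lie in a common apartment, so the second-level higher building $T^2_n(F)$ (the subcomplex of $T_n(F)*T_n(F)$ on pairs of flags with a common basis) is \emph{equal} to the full join $T_n(F)*T_n(F)$. Join connectivity then gives $(2n-4)$-connectivity at level~$2$ directly. From there, each further bar construction adds~$1$ to the connectivity of $D^k_n$, and the extra suspension that appears when passing from $D^k_n$ back to $T^k_n$ absorbs it, so $T^k_n(F)$ stays $(2n-4)$-connected for all $k\ge 2$, and hence so does the colimit $\mathrm{CB}_n(F)$. This is also exactly where the argument breaks for non-field PIDs (e.g.\ two lines in $\mathbb{Z}^2$ need not have a common basis), so your remark about $n=2$ and complementary lines is pointing at the right phenomenon but at the wrong place in the argument: it is needed for all~$n$, at the passage from $T^1$ to $T^2$, not just as a base case of an induction on~$n$.
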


We resolve this conjecture for $R=F$ a field.

\begin{theorem}\label{ConnectivityThm}
For $F$ a field, $ {\CB}_n(F)$ is $(2n-4)$-connected. 
\end{theorem}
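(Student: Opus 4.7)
The plan is to identify $\CB_n(F)$ with an iterated bar construction on an equivariant monoid built out of Tits buildings, as foreshadowed by the abstract, and then to deduce the connectivity bound from the Solomon--Tits theorem.

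I would begin by reformulating simplices combinatorially. A simplex $\{V_0,\dots,V_p\}$ of $\CB_n(F)$ admits a common basis $\{e_1,\ldots,e_n\}$ of $F^n$, equivalently a direct sum decomposition $F^n = W_1 \oplus \cdots \oplus W_k$ (for some $k \geq 2$) together with proper nonempty subsets $S_i \subsetneq [k]$ such that $V_i = \bigoplus_{j \in S_i} W_j$. Choosing a minimal such decomposition, namely one in which the $S_i$ and their complements generate the full partition of $[k]$ into singletons via the ``same memberships'' equivalence relation, stratifies simplices of $\CB_n(F)$ by direct sum decompositions of $F^n$. The stratum over a decomposition into $k$ parts is then controlled by a complex of proper nonempty subsets of $[k]$ together with $\prod_j \GL(W_j)$-equivariant data on the pieces.

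I would then assemble these strata into an iterated bar construction. Introduce an equivariant monoid $\cM$ on the groupoid of finitely generated free $F$-modules, with $\cM(V)$ built from the Tits building $\cT(V)$ and multiplication induced by direct sum. One bar construction on $\cM$ records direct sum decompositions of $F^n$, while a second records the additional Boolean structure of the subsets $S_i \subsetneq [k]$; these combine into a bisemisimplicial model whose geometric realization I would show is weakly equivalent to $\CB_n(F)$. Connectivity then follows: by Solomon--Tits, $\cT(W)$ is $(\dim W - 3)$-connected (indeed a wedge of top-dimensional spheres), and a spectral sequence argument on the bisemisimplicial model, or equivalently a direct join-and-homotopy-colimit estimate summed over contributing strata, propagates this through the double bar construction to yield the desired $(2n-4)$-connectivity.

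The main obstacle is the first step: establishing the precise equivalence between $\CB_n(F)$ and the iterated bar construction. The delicate issue is that the ``finest common decomposition'' of a simplex is not canonical as a tuple of subspaces, only up to the natural action of $\prod_j \GL(W_j)$, so the bar construction must be equivariant and the argument must carefully quotient by this action while keeping track of faces and degeneracies. Once this model is in place, the connectivity estimate reduces to bookkeeping with Solomon--Tits and standard connectivity bounds for joins and homotopy colimits.
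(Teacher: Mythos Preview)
Your high-level strategy---model $\CB_n(F)$ via iterated bar constructions on a monoid built from Tits buildings, then propagate Solomon--Tits connectivity---matches the paper's. But there is a genuine gap: nothing in your outline uses the hypothesis that $F$ is a field. Your stratification by finest direct-sum decompositions, the bar constructions, and the Solomon--Tits input all make sense over any PID, so if your argument worked it would prove Rognes' conjecture in full. The paper is explicit about where fields enter: over a field, any two flags in $F^n$ admit a common basis (equivalently, any two chambers of the building lie in a common apartment), so the complex $T^2_n(F)$ of pairs of flags with a common basis coincides with the full join $T_n(F)*T_n(F)$. This join is $(2n-4)$-connected by Solomon--Tits, and \emph{this} is the base case from which the bar-construction iteration climbs; each further bar construction adds only $1$ to connectivity. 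Over $\bZ$ the identification already fails for $n=2$. Your spectral-sequence sketch, starting only from the $(\dim W-3)$-connectivity of $\cT(W)$, would have to recover this doubling of connectivity somewhere, and you have not indicated how.

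Your acknowledged ``main obstacle''---turning the finest-decomposition stratification into an honest simplicial or bar model---is real, and the paper does not take this route. Face maps in $\CB_n$ (deleting a $V_i$) can coarsen the minimal decomposition, so the strata do not assemble into a simplicial object in any evident way. Instead the paper works with explicit $k$-fold simplicial models $D^{k}_n$ (suspensions of subcomplexes $T^k_n$ of the $k$-fold join of Tits buildings), proves $BD^{k}\simeq D^{k+1}$ via a substantial combinatorial Morse-theory comparison between split and unsplit versions of these buildings, and identifies $\CB_n$ with $\colim_k T^k_n$. One further subtlety you omit: the bar-construction argument yields connectivity of the suspended models $D^k_n$, but desuspending to $T^k_n$ (and hence to $\CB_n$) gives only homology vanishing; a separate hands-on argument for simple connectivity of $T^k_n$ is needed before Hurewicz can be invoked.
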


Rognes  \cite{Rog1} proved that  $\widetilde{H}_i({\CB}_n(R))$ vanishes for $i>2n-3$. It follows that the reduced homology of ${\CB}_n(F)$ is concentrated in degree $2n-3$. Rognes named the conjectural single non-vanishing reduced homology group the \emph{stable Steinberg module} $\St^{\infty}_n(R)$ \cite[Definition 11.3]{Rognes96}. \autoref{ConnectivityThm} shows that the homology of the associated graded of the stable rank filtration of the $K$-theory of a field is the homology of $\GL_n(F)$ with coefficients in the stable Steinberg module:
$$H_i\big(\mathcal{F}_n K(F)/ \mathcal{F}_{n-1}K(F)\big) \cong H_{i-2n+2}\big(\GL_n(F); \St^{\infty}_n(F)\big).$$
In particular this result implies a vanishing line on the $E^1$ page of the spectral sequence associated to the stable rank filtration. 

We note that in the case that $F$ is an infinite field, Galatius--Kupers--Randal-Williams recently proved that $(\Sigma^{\infty} \Sigma {\CB}_n(F))_{h\GL_n(F)}$ is $(2n-3)$-connected \cite[Theorem C]{e2cellsIV}. In particular, their results imply the same vanishing line on the $E^1$ page of the spectral sequence associated to the stable rank filtration in the case of infinite fields. Our result implies $(\Sigma^{\infty} \Sigma {\CB}_n(F))_{h\GL_n(F)}$ is $(2n-3)$-connected for all fields.

\subsection{Higher Tits buildings and the proof strategy} 

The rank filtration not only gives a filtration of the $K$-theory spectrum but also gives a filtration of each of the spaces in the Waldhausen model of the K-theory  spectrum. Rognes proved that the associated graded of the rank filtration of the $k$th space is the reduced homotopy orbits of a $\GL_n(R)$ action on a space $D^k_n(R)$.\footnote{Rognes' notation differs from our notation having subscripts and superscripts flipped.} This space can be thought of as a $k$-dimensional version of the Tits building. The definition of $D^k_n(R)$ is somewhat involved (see \autoref{Dab}, following Rognes \cite[Definition 3.9]{Rog1}) so we will only describe a model of its desuspension here. 

The Tits building $T_n(R)$ is the realization of the poset of proper nontrivial summands of $R^n$ ordered by inclusion. A simplex in $T_n(R)$ is a flag of summands, that is, a collection of summands totally ordered under inclusion. Let $T^k_n(R)$ denote the subcomplex of the $k$-fold join \[\underbrace{T_n(R) * \dots * T_n(R)}_{k \text{ times}} \] of simplices that admit a common basis.  There is an equivalence $D_n^k(R) \simeq \Sigma^{k+1} T_n^k(R)$ (\autoref{suspend}). 

The complexes $\{D^k_n(R)\}_n$ assemble to form a kind of equivariant monoid. Let $\GL(R)$ denote the groupoid of general linear groups $\{\GL_n(R)\}_n$ viewed as a symmetric monoidal category with block sum. 
The category $\Fun(\GL(R),{\Top_*}  )$ of functors to based spaces has  the structure of a symmetric monoidal category as well. The monoidal operation is given by Day convolution (\autoref{Day}). 
 Let $D^k(R)$ denote the functor $n \mapsto D^k_n(R)$. Galatius--Kupers--Randal-Williams \cite{e2cellsIV} observed that  $D^k(R)$ has the structure of an augmented graded-commutative monoid object in this category. Concretely, the multiplication map is the data of $(\GL_n(R) \times \GL_m(R))$-equivariant maps: \[D^k_n(R) \wedge D^k_m(R) \m D^k_{n+m}(R). \] It satisfies an equivariant version of the associativity and commutativity axioms. This augmented monoid structure allows us to make sense of bar constructions. Our main technical result is the following. 

\begin{lemma} \label{mainLemma} For $R$ a PID and $k \geq 1$, there is an equivalence $B D^k(R) \simeq D^{k+1}(R)$.
\end{lemma}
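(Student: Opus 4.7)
The plan is to compare both sides of the equivalence by establishing an explicit level-wise identification of semi-simplicial based spaces, using the model $D^k_n(R) \simeq \Sigma^{k+1} T^k_n(R)$ from \autoref{suspend}.

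First I would unpack $BD^k(R)$ via the reduced bar construction for Day convolution. At arity $n$, its $p$-simplices split as a wedge indexed by ordered compositions $n_1 + \cdots + n_p = n$ with $n_i \geq 1$, with the summand at $\vec{n}$ given by
\[(\GL_n(R)/(\GL_{n_1}(R) \times \cdots \times \GL_{n_p}(R)))_{+} \wedge \bigwedge_{i=1}^{p} D^k_{n_i}(R),\]
where each coset parametrizes an ordered direct sum decomposition $R^n = V_1 \oplus \cdots \oplus V_p$ and each $D^k_{n_i}(R)$ is a $(k+1)$-fold suspension of the space of common-basis $k$-tuples of flags in $V_i$.

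Next I would build a $\GL_n(R)$-equivariant comparison map to $D^{k+1}(R)(n) \simeq \Sigma^{k+2} T^{k+1}_n(R)$. An ordered decomposition $R^n = V_1 \oplus \cdots \oplus V_p$ equipped with a common-basis $k$-tuple of flags in each $V_i$ assembles into a $(k+1)$-tuple of flags in $R^n$ sharing a common basis: the ``new'' flag is the partial-sums flag $V_1 \subsetneq V_1 \oplus V_2 \subsetneq \cdots$, and the other $k$ flags are obtained by concatenating the corresponding flag in each $V_i$ along the ordering of the decomposition. All $k+1$ of these flags share the amalgamated common basis, hence define a simplex of $T^{k+1}_n(R)$. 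Tracking the suspension coordinates from the simplicial realization direction and from the $(k+1)$-fold suspensions in each factor produces the needed extra suspension on the target.

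To show the map is an equivalence, I would invert it combinatorially using the PID hypothesis. Any simplex of $T^{k+1}_n(R)$ consists of $k+1$ flags sharing a common basis; designating one of them as distinguished and applying the common basis canonically produces an ordered direct sum decomposition of $R^n$ (every summand being free of a well-defined rank), and the remaining $k$ flags restrict to common-basis configurations in each summand. The face and degeneracy structure of the bar construction (multiplication of adjacent monoid factors under Day convolution, insertion of the unit) corresponds to merging consecutive pieces of the decomposition or inserting a trivial summand, which matches precisely with the simplicial faces and degeneracies along the distinguished join coordinate in $T^{k+1}_n(R)$. The main obstacle, in my view, is the bookkeeping required to match suspension coordinates and simplicial structure: one must verify that the combinatorial bijection on simplices upgrades to a level-wise equivalence of semi-simplicial based spaces compatible with the monoid multiplication, the augmentation, and all the suspension coordinates, so that taking geometric realization yields the stated equivalence $BD^k(R) \simeq D^{k+1}(R)$.
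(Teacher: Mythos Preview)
Your construction of the comparison map is essentially correct and coincides with the paper's: the bar direction records an ordered splitting of $R^n$, the $k$ concatenated flags give the $L_\bullet$ factors, and together these define a level-wise isomorphism $B_\bullet D^{k,0}_{\bullet,\ldots,\bullet} \cong D^{k,1}_{\bullet,\ldots,\bullet}$ (this is \autoref{Bsplit}, which the paper treats as elementary). Your partial-sums flag is then the image under the forget-complements map $D^{k,1} \to D^{k+1,0}$.

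The gap is in your proposed inverse. The forget-complements map is \emph{not} a level-wise bijection, and the PID hypothesis does not rescue this: a simplex of $T^{k+1}_n(R)$ is a $(k+1)$-tuple of flags, and no common basis is part of the data. There is in general no canonical choice of common basis, and different choices yield different splittings of the distinguished flag, hence different preimages in $D^{k,1}$. For instance, two splittings $(V_1,V_2)$ and $(V_1,V_2')$ of $R^n$ with $V_2 \neq V_2'$ both map to the single flag $V_1$; if both are compatible with the remaining $k$ flags, they are distinct preimages of the same simplex. So the step ``applying the common basis canonically produces an ordered direct sum decomposition'' is not well-defined as a map of simplicial sets. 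Establishing that the forget-complements map is nevertheless a homotopy equivalence is the substantive content of the lemma: in the paper this is \autoref{SplitvsNotSplitD}, which rests on the combinatorial Morse theory of \autoref{Section-TheBuildings} (specifically \autoref{ContractibleTits} and \autoref{higherContractible}) showing that the relevant intersections $\bigcap_i T^{a,b}(M,\sigma_i)$ indexed by the preimages of a fixed flag are contractible. Your proposal skips exactly this step.
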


Since bar constructions increase connectivity by one (see \autoref{Bconn}) and, by \autoref{colimTD},  \[{\CB}_n(R) \simeq \colim_{k \m \infty} T_n^k(R),\] we obtain the following corollary. 

\begin{corollary} Let $R$ be a PID. If $T_n^2(R)$ is $(2n-4)$-connected, so is ${\CB}_n(R)$. \end{corollary}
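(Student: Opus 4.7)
The plan is to induct on $k \geq 2$, proving the uniform bound that $T_n^k(R)$ is $(2n-4)$-connected for every $k \geq 2$, and then pass to the colimit. The base case $k = 2$ is precisely the hypothesis of the corollary.

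For the inductive step I would chain together three ingredients. First, by \autoref{suspend}, the inductive hypothesis that $T_n^k(R)$ is $(2n-4)$-connected upgrades to the statement that $D_n^k(R) \simeq \Sigma^{k+1} T_n^k(R)$ is $(2n+k-3)$-connected. Next, \autoref{mainLemma} identifies $D^{k+1}(R) \simeq B D^k(R)$ as the bar construction on the previous stage, and \autoref{Bconn} tells me this bar construction raises connectivity by one, so $D_n^{k+1}(R)$ is $(2n+k-2)$-connected. Finally, desuspending via the equivalence $D_n^{k+1}(R) \simeq \Sigma^{k+2} T_n^{k+1}(R)$ from \autoref{suspend} shows that $T_n^{k+1}(R)$ is $(2n-4)$-connected, completing the induction. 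The numerology works out because the $+1$ gain from the bar construction exactly cancels the $+1$ cost of desuspending through one additional suspension factor; this is the reason the bound does not drift as $k$ grows.

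With the uniform bound in place I would invoke \autoref{colimTD} to identify $\CB_n(R) \simeq \colim_{k \to \infty} T_n^k(R)$, and then use the fact that homotopy groups of CW complexes commute with sequential colimits along cellular inclusions to conclude that $\CB_n(R)$ is $(2n-4)$-connected. I do not anticipate a serious obstacle: once \autoref{mainLemma} is available, the corollary is essentially a telescoping connectivity count. The points that merit a careful check are that \autoref{Bconn} applies in the form needed to the Day-convolution monoid $D^k(R)$ in $\Fun(\GL(R), \Top_*)$, and that the colimit appearing in \autoref{colimTD} is filtered along cofibrations so that the connectivity bound genuinely passes to the limit.
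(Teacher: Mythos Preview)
Your overall architecture matches the paper's: induct on $k$ to get a uniform connectivity bound for the $T_n^k(R)$, then pass to the colimit via \autoref{colimTD}. The colimit step is fine. But the inductive step has a genuine gap.

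The problem is the sentence ``desuspending via the equivalence $D_n^{k+1}(R) \simeq \Sigma^{k+2} T_n^{k+1}(R)$ from \autoref{suspend} shows that $T_n^{k+1}(R)$ is $(2n-4)$-connected.'' Connectivity does not desuspend. From the fact that $\Sigma^{k+2} X$ is $(2n+k-2)$-connected you only learn that $\widetilde H_i(X)=0$ for $i\le 2n-4$; you learn nothing about $\pi_1(X)$. (Take $X$ acyclic with nontrivial fundamental group: then $\Sigma X$ is contractible but $X$ is not simply connected.) So your induction, as written, proves only that each $T_n^k(R)$ is $(2n-4)$-\emph{acyclic}, and the appeal to Freudenthal implicit in ``desuspending'' is circular because it would require exactly the connectivity of $T_n^{k+1}(R)$ you are trying to establish.

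The paper's detailed proof (\autoref{maingeneral}) is organized to avoid this. It runs the induction entirely at the level of the $D_n^k(R)$, where suspension goes the right way, obtaining that $D_n^{k}(R)$ is $(2n+k-3)$-connected for all $k\ge 2$ (\autoref{DRognes}); from this it deduces only the homological statement $\widetilde H_i(T_n^k(R))=0$ for $i\le 2n-4$. To upgrade to genuine connectivity it supplies a separate, hands-on argument (\autoref{badness}) that $T_n^k(R)$ is simply connected for $n\ge 3$ and $k\ge 2$: a bad-simplex argument homotopes any loop into a copy of $T_n^{2}(R)$ sitting inside $T_n^k(R)$, where it dies by hypothesis. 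Hurewicz then gives $(2n-4)$-connectivity. For $n\le 2$ the target is $\le 0$-connectivity, which \emph{is} detected by reduced homology, so no extra input is needed there. This missing simple-connectivity step is exactly what you flagged as ``points that merit a careful check,'' but it is not a routine check---it requires an independent argument.
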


When $F$ is a field,  $T_n^2(F)$ is simply the join $T_n(F) * T_n(F)$, since every pair of flags has a common basis. This is equivalent to the classical property that every pair of chambers in the Tits building of $F^n$ are contained in a common apartment.  The Tits building $T_n(R)$ is known to be $(n-3)$-connected for $R$ a PID by the Solomon--Tits Theorem (here, \autoref{SolomonTits}), and  \autoref{ConnectivityThm} follows. The identification $T_n^2(F) = T_n(F) * T_n(F)$ is the one point in the paper that we must specialize to fields instead of general PIDs. For a general PID this equality fails, for example,
$$\left( \Z \begin{bmatrix} 1\\1 \end{bmatrix} \right) *  \left( \Z \begin{bmatrix} 1\\-1 \end{bmatrix} \right)$$ is a simplex of $T_2(\Z) * T_2(\Z)$ but not $T_2^2(\Z)$; see \autoref{ExampleIncompatibleLines}.

\subsection{Steinberg modules} 

The Steinberg module is defined to be \[\St_n(R):=\widetilde H_{n-2}(T_n(R)) \cong H_n(D_n^1(R)),\] for $n>0$, and by convention $\St_0(R)=\Z$. This is an important object in representation theory, algebraic $K$-theory, and the cohomology of arithmetic groups. The assignment $n \mapsto \St_n(R)$ assembles to a functor $\St(R)$ from $\GL(R)$ to abelian groups. Miller--Nagpal--Patzt \cite{MNP} described a multiplication map \[\St_n(R) \otimes \St_m(R) \m \St_{n+m}(R) \] involving  ``apartment concatenation'' which makes $\St(R)$ into an connected augmented graded-commutative monoid object in $\Fun(\GL(R),\Ab)$, the category of functors to abelian groups. Using this structure, we can make sense of the groups $\Tor^{\St(F)}_i(\Z,\Z)_n$. Here $i$ denotes the homological grading and $n$ denotes the grading associated to the groupoid \[\GL(R)=\bigsqcup_n \GL_n(R).\] Miller--Nagpal--Patzt proved that, for $F$ a field, $\St(F)$ is Koszul in the sense that $\Tor^{\St(F)}_i(\Z,\Z)_n$ vanishes for $i \neq n$. They asked what the Koszul dual of $\St(F)$ is \cite[Question 3.11]{MNP} (in other words, what is $\Tor^{\St(F)}_n(\Z,\Z)_n$?). Since $\Tor$ groups can be computed using bar constructions, and
$$ \widetilde{H}_{2n-3}(T_n(F) * T_n(F) ) \cong \St_n(F) \otimes \St_n(F),$$ 
 \autoref{mainLemma} lets us answer this question.


\begin{theorem} \label{KD} Let $n \geq 1$. Let $R$ be a PID such that $T_n^2(R)$ is $(2n-4)$-connected.  Then there is an isomorphism of  $\GL_n(R)$-representations 
$$\Tor^{\St(R)}_i(\Z,\Z)_n \cong\left\{\begin{array}{ll}   \widetilde{H}_{2n-3}\left(T_n^2(R)\right), & i=n  \\ 0, & i \neq n.  \end{array} \right. .$$ 
In particular, these isomorphisms hold for $R=F$ a field, and imply isomorphisms of $\GL_n(F)$-representations $$\Tor^{\St(F)}_n(\Z,\Z)_n \cong \St_n(F) \otimes \St_n(F).$$ 
\end{theorem}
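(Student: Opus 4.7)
The plan is to compute $\Tor^{\St(R)}_*(\Z,\Z)$ via a bar spectral sequence applied to the topological augmented monoid $D^1(R)$, and then identify the abutment with $\widetilde H_*(T_n^2(R))$ using \autoref{mainLemma} together with the suspension relation $D_n^2(R)\simeq \Sigma^3 T_n^2(R)$ from \autoref{suspend}.

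First, I would realize $\Tor^{\St(R)}_i(\Z,\Z)_n$ as the $i$th homology of the reduced bar complex of $\St(R)$ in the Day convolution category $\Fun(\GL(R),\Ab)$, whose degree-$p$ term in internal grading $n$ is
$$B_p\St(R)_n = \bigoplus_{\substack{a_1+\cdots+a_p=n\\ a_i\geq 1}} \Ind_{\GL_{a_1}(R)\times\cdots\times\GL_{a_p}(R)}^{\GL_n(R)}\bigl(\St_{a_1}(R)\otimes\cdots\otimes\St_{a_p}(R)\bigr),$$
with differential induced by the Miller--Nagpal--Patzt apartment-concatenation product. In parallel, I would set up the topological reduced bar construction of the augmented Day-convolution monoid $D^1(R)$ in $\Fun(\GL(R),\Top_*)$ and use its simplicial filtration to obtain a spectral sequence
$$E^1_{p,q}(n) = H_q\bigl(D^1(R)^{\wedge p}\bigr)_n \Longrightarrow H_{p+q}\bigl(BD^1(R)\bigr)_n.$$

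The key input for collapse is that, by \autoref{suspend} and the Solomon--Tits theorem (\autoref{SolomonTits}), $D^1_a(R)\simeq \Sigma^2 T_a(R)$ with $T_a(R)$ being $(a-3)$-connected of simplicial dimension $a-2$; hence over the PID $R$ its reduced homology is concentrated in the single degree $a-2$ and is free abelian (as a subgroup of the free abelian chain group). Consequently $H_q(D^1_a(R))$ is free abelian and concentrated in $q=a$, equal to $\St_a(R)$. The equivariant Künneth formula applied to each Day-convolution summand $\Ind_{\prod_i \GL_{a_i}(R)}^{\GL_n(R)}\bigl(D^1_{a_1}(R)\wedge\cdots\wedge D^1_{a_p}(R)\bigr)$ therefore has no $\Tor$ contributions, placing $E^1$ entirely on the line $q=n$ and identifying it with $B_p\St(R)_n$. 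Matching $d^1$ with the algebraic bar differential reduces to checking that the topological multiplication on $D^1(R)$ induces apartment concatenation on homology. The spectral sequence collapses at $E^2$, and by \autoref{mainLemma} with $k=1$ followed by \autoref{suspend} at $k=2$,
$$\Tor^{\St(R)}_p(\Z,\Z)_n \cong H_{p+n}\bigl(BD^1(R)\bigr)_n = H_{p+n}\bigl(D^2_n(R)\bigr) = \widetilde H_{p+n-3}\bigl(T_n^2(R)\bigr).$$

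To conclude, I would invoke the connectivity hypothesis together with the dimension bound: $T_n^2(R)$ is a subcomplex of $T_n(R)*T_n(R)$ and hence has dimension at most $2n-3$, and is $(2n-4)$-connected by assumption. So $\widetilde H_{p+n-3}(T_n^2(R))$ vanishes unless $p=n$, yielding the vanishing statement and the identification $\Tor^{\St(R)}_n(\Z,\Z)_n\cong \widetilde H_{2n-3}(T_n^2(R))$. For $F$ a field the introduction observes that $T_n^2(F)=T_n(F)*T_n(F)$; since $\widetilde H_*(T_n(F))$ is concentrated in degree $n-2$ and free abelian, the join Künneth formula has vanishing $\Tor$ contribution and gives $\widetilde H_{2n-3}(T_n(F)*T_n(F))\cong \St_n(F)\otimes \St_n(F)$. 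The main obstacle I anticipate is making the $E^1$-identification precise: verifying both the equivariant Künneth computation on each Day-convolution summand (including compatibility with the induction under $\GL_n(R)$) and the matching of the induced $d^1$ with the algebraic bar differential for apartment concatenation; everything else reduces to bookkeeping with the quoted connectivity and dimension results.
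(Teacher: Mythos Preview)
Your proposal is correct and follows essentially the same route as the paper. Both arguments reduce $\Tor^{\St(R)}_i(\Z,\Z)_n$ to $\widetilde H_{i+n}(D^{2}_n(R))$ via a bar-type spectral sequence that collapses because Solomon--Tits concentrates $\widetilde H_*(D^1_a(R))$ in a single degree, then invoke \autoref{mainLemma} and \autoref{suspend}; the only cosmetic difference is that the paper packages the collapse as a hypertor spectral sequence $\Tor^{\widetilde H_*(D^1)}_*\Rightarrow \Tor^{\widetilde C_*(D^1)}_*$ rather than your explicit $E^1$ K\"unneth computation, and your dimension bound $\dim T_n^2(R)\le 2n-3$ makes explicit a step the paper leaves implicit.
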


\begin{question}
The Koszul dual of an associative ring naturally has a co-algebra structure. Since $\St(R)$ is graded-commutative, there is also an algebra structure on the Koszul dual. How can these structures be described explicitly in terms of apartments? 
\end{question}

\begin{remark}
A $k$-fold iterated bar construction computes $E_k$-indecomposables in the sense of Galatius--Kupers--Randal-Williams \cite{e2cellsI} (also known as $E_k$-homology or $E_k$-Andr\'e--Quillen homology). From this viewpoint, \autoref{mainLemma} implies that $H_*(D^{j+k}(R))$ is the $E_k$-homology of $D^j(R)$. Since the reduced chain complex  $ \widetilde{C}_*(D^1_n(R))$ is equivalent to a shift of $\St_n(R)$, we can interpret $H_*(D^{k}(R))$ as the $E_{k-1}$-homology  of $\St(R)$. In particular, the homology of Rognes' common basis complex measures the $E_\infty$-indecomposables of $\St(R)$. When $R=F$ is a field, this implies that the stable Steinberg modules $\St^{\infty}_n(F)$ measure the $E_\infty$-indecomposables of $\St(F).$
\end{remark}

\subsection{Overview} The paper is structured as follows. 

In \autoref{SectionAlgebraicFoundations} we characterize the common basis property of $R$-submodules in terms of an inclusion-exclusion formula. In \autoref{Section-TheBuildings} we study simplicial complexes that interpolate between the higher buildings of Rognes \cite{Rog1} and the split variations of Galatius--Kupers--Randal-Williams \cite{e2cellsIV}. This section uses tools from combinatorial topology to prove certain complexes are homology equivalent. In \autoref{Section-AlgebraicPropertiesHigherTitsBuildings}, we prove the equivalence $D_n^k(R) \simeq \Sigma^{k+1} T_n^k(R)$, and describe a relationship between the split versions of the complexes and a bar construction in the category of representations of the groupoid $\GL(R)$. \autoref{SectionRognesConjecture} uses these results to prove Rognes' connectivity conjecture for fields,  \autoref{ConnectivityThm}. In \autoref{SectionKoszulDual} we prove \autoref{KD}, in particular computing the Koszul dual of the Steinberg monoid for a field. 

\subsection{Acknowledgments} We thank Alexander Kupers who contributed significantly to the this project but declined authorship. We also thank Andrea Bianchi, Benjamin Br\"uck, S\o ren Galatius, Manuel Rivera, John Rognes, and Oscar Randal-Williams for helpful conversations. We thank two anonymous referees for close readings and detailed suggestions to improve the paper. 

This work was done in part at a SQuaRE at the American Institute for Mathematics. The authors are grateful to AIM for their support.

\section{Algebraic foundations}\label{SectionAlgebraicFoundations}

Throughout the paper, we let $R$ denote a PID and (in later sections) $F$ a field. We begin by recalling some basic algebraic properties of modules over a PID. Then we give a characterization of when a collection of submodules has the common basis property in terms of an inclusion/exclusion condition (\autoref{CBPCriterionPID}). This characterization will be used in \autoref{Section-TheBuildings} to prove certain subcomplexes of higher Tits buildings are full. 

\subsection{Review of some algebraic preliminaries}

We summarize some statements about PIDs $R$. The following result is standard (see, for example, Kaplansky \cite{Kaplansky}).

\begin{lemma}  \label{LemmaSplit} Let $R$ be a PID, and let $U$ be an $R$-submodule of $R^n$. The following are equivalent. 
\begin{itemize}
\item There exist an $R$-submodule $C$ such that $R^n = U \oplus C$. 
\item There exists a basis for $U$ that extends to a basis for $R^n$. 
\item Any basis for $U$ extends to a basis for $R^n$. 
\item The quotient $R^n/U$ is torsion-free. 
\end{itemize} 
\end{lemma}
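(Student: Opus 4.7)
The plan is to verify a cycle of implications, all of which are easy consequences of two standard facts about modules over a PID: (a) any submodule of a free module is free, and (b) any finitely generated torsion-free module is free. Both can be cited from Kaplansky rather than reproved.

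First I would establish $(1) \Rightarrow (3)$: if $R^n = U \oplus C$, then both $U$ and $C$ are free by fact (a), and the union of any basis of $U$ with any basis of $C$ is manifestly a basis of $R^n$. The implication $(3) \Rightarrow (2)$ is trivial, since fact (a) guarantees $U$ has at least one basis to which $(3)$ can be applied. For $(2) \Rightarrow (1)$, given a basis $\{u_1, \dots, u_k\}$ of $U$ extending to a basis $\{u_1, \dots, u_k, v_1, \dots, v_{n-k}\}$ of $R^n$, I would simply take $C = \Span(v_1, \dots, v_{n-k})$. For $(1) \Rightarrow (4)$, the composite $C \hookrightarrow R^n \twoheadrightarrow R^n/U$ is an isomorphism, exhibiting $R^n/U$ as a submodule of $R^n$, which is torsion-free.

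The one substantive step is $(4) \Rightarrow (1)$: assuming $R^n/U$ is torsion-free, it is also finitely generated as a quotient of $R^n$, so by fact (b) it is free. The short exact sequence
\[
0 \lra U \lra R^n \lra R^n/U \lra 0
\]
therefore splits since its quotient is projective, producing a complement $C$ as required.

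No real obstacle arises: once facts (a) and (b) are invoked, the argument is entirely formal. The only thing I would be careful about is checking that $U$ is known to be free before asserting $(3)$ makes sense, which is why I would order the chain as $(1) \Rightarrow (3) \Rightarrow (2) \Rightarrow (1)$ together with $(1) \Leftrightarrow (4)$ rather than attempting $(4) \Rightarrow (3)$ directly.
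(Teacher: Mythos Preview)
Your proof is correct. The paper does not actually prove this lemma; it simply declares the result standard and cites Kaplansky, so your argument is more detailed than what appears in the paper rather than different from it.
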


\begin{definition} Let $R$ be a PID, and $U \subseteq R^n$  a submodule satisfying the equivalent conditions of \autoref{LemmaSplit}. Then we call $U$ a \emph{split} submodule. We may also say that $U$ \emph{is  a  summand} or \emph{has a complement} in $R^n$.  
\end{definition} 

We begin with some basic lemmas. 

\begin{lemma} \label{LemmaNestedSplit}  Let $R$ be a PID. Let $W$ be a summand of $R^n$, and let $U \subseteq W$. Then $U$ is a summand of $W$ if and only if it is a summand of $R^n$. 
\end{lemma}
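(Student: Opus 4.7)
The plan is to use the torsion-free quotient criterion from \autoref{LemmaSplit} for one direction and a direct construction of complements for the other. Since $W$ is a summand of $R^n$, it is a direct summand of a finitely generated free module over a PID and is therefore itself finitely generated and free, so \autoref{LemmaSplit} applies verbatim with $W$ in place of $R^n$.

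For the ($\Leftarrow$) direction, suppose $U$ is a summand of $W$, so $W = U \oplus C'$ for some $C' \subseteq W$. By hypothesis, there is also some $C \subseteq R^n$ with $R^n = W \oplus C$. Then $R^n = U \oplus (C' \oplus C)$ exhibits $U$ as a summand of $R^n$.

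For the ($\Rightarrow$) direction, I would apply the equivalence between ``being a summand'' and ``having a torsion-free quotient'' from \autoref{LemmaSplit}. If $U$ is a summand of $R^n$, then $R^n/U$ is torsion-free. Because $U \subseteq W \subseteq R^n$, the inclusion $W \hookrightarrow R^n$ descends to an injection $W/U \hookrightarrow R^n/U$. Submodules of torsion-free modules are torsion-free, so $W/U$ is torsion-free. Applying \autoref{LemmaSplit} to the finitely generated free module $W$ then gives that $U$ is a summand of $W$.

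There is no real obstacle here; the only point to be careful about is that \autoref{LemmaSplit} is stated for $R^n$, so one needs to note that $W$ is itself free (as a summand of a finitely generated free module over a PID) in order to invoke the criterion for $W$.
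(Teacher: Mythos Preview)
Your proof is correct and follows essentially the same approach as the paper: one direction via complements (or equivalently basis extension), the other via the torsion-free quotient criterion and the injection $W/U \hookrightarrow R^n/U$. Note, however, that your direction labels are swapped: the argument you call ($\Leftarrow$) is actually the forward implication (``$U$ a summand of $W$ $\Rightarrow$ $U$ a summand of $R^n$''), and vice versa.
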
 
\begin{proof}  If $U \subseteq W$ and $W \subseteq R^n$ are both split, then we can extend any basis for $U$ to a basis for $W$, then further extend to a basis for $R^n$. We deduce that $U$ is split in $R^n$. Conversely, suppose $U$ is a summand of $R^n$. Then $W/U \subseteq R^n/U$ is torsion-free, so $U$ is a summand of $W$. 
\end{proof}

\begin{lemma} \label{LemmaIntersectionSplit} Let $R$ be a PID,  and let $U,W \subseteq R^n$ be submodules. If $W$ is a summand of $R^n$, then $W \cap U$ is  summand of $U$.  If $U$ and $W$ are both summands of $R^n$, then in addition $W \cap U$ is a summand of $R^n$. 
\end{lemma}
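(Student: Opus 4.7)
The plan is to reduce everything to the torsion-free characterization of summands from \autoref{LemmaSplit} and then invoke the transitivity provided by \autoref{LemmaNestedSplit}. Neither step requires any new algebraic input; the main observation is that the inclusion $U \hookrightarrow R^n$ induces a useful map on quotients.

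For the first claim, I would verify that $W \cap U$ is a summand of $U$ by checking that $U/(W \cap U)$ is torsion-free, applying \autoref{LemmaSplit} within the ambient module $U$. The composite $U \hookrightarrow R^n \twoheadrightarrow R^n/W$ has kernel exactly $W \cap U$, so it descends to an injection
\[
U/(W \cap U) \hookrightarrow R^n/W.
\]
Because $W$ is a summand of $R^n$, \autoref{LemmaSplit} tells us $R^n/W$ is torsion-free, and any submodule of a torsion-free module over a PID is torsion-free. Hence $U/(W \cap U)$ is torsion-free, and \autoref{LemmaSplit} (applied in $U$) yields that $W \cap U$ is a summand of $U$.

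For the second claim, assuming additionally that $U$ is a summand of $R^n$, I would simply chain the splittings. The first part gives that $W \cap U$ is a summand of $U$, and by hypothesis $U$ is a summand of $R^n$, so the nested-summands transitivity statement \autoref{LemmaNestedSplit} immediately gives that $W \cap U$ is a summand of $R^n$.

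The main obstacle is essentially bookkeeping rather than mathematical: one must be careful that \autoref{LemmaSplit} is being applied in the correct ambient module at each stage (in $U$ to conclude $W \cap U \subseteq U$ is split, and then in $R^n$ via \autoref{LemmaNestedSplit}). Everything else follows from the standard fact that submodules of torsion-free modules over a PID are torsion-free.
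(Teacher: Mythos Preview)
Your proof is correct and essentially identical to the paper's: the paper phrases the injection $U/(U\cap W)\hookrightarrow R^n/W$ via the second isomorphism theorem $U/(U\cap W)\cong (U+W)/W$, but this is the same map you construct from the composite $U\hookrightarrow R^n\twoheadrightarrow R^n/W$, and the second claim is likewise handled by \autoref{LemmaNestedSplit}.
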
 
\begin{proof} Since $W$ is split in $R^n$, the quotient $U / (U\cap W) \cong (U+W)/ W \subseteq R^n / W$ is torsion-free. Thus $U \cap W$ is a split submodule of $U$. If $U$ is split, then $U\cap W$ is split in $R^n$ by  \autoref{LemmaNestedSplit}. 
  \end{proof}


  
  

  
  \begin{example} \label{ExampleIncompatibleLines} \autoref{LemmaIntersectionSplit} states that the collection of split submodules of $R^n$ is closed under finite intersection. Observe that it is not closed under sum. For example, 
$$ U = \mathrm{span} \left( \begin{bmatrix} 1\\ 1 \end{bmatrix} \right)  \qquad \text{and} \qquad W = \mathrm{span} \left( \begin{bmatrix} 1\\-1 \end{bmatrix} \right)$$ are both split $\Z$-submodules of $\Z^2$, but their sum is index $2$ in $\Z^2$. 
  \end{example}

\begin{definition} \label{DefnCompatible} Let $R$ be a PID and $n \geq 1$. We say a collection  of  submodules $\{U_0, U_1, \dots, U_p\}$  of $R^n$ has the \emph{common basis property} or is \emph{compatible} if there exists a basis $B$ for $R^n$ such that each $U_i$ is spanned by a subset of $B$.  We say that two or more collections of submodules are \emph{compatible} with each other if their union is compatible. 

\end{definition} 

Observe that, if submodules $\{U_0, U_1, \dots, U_p\}$ of $R^n$ have the common basis property, each module $U_i$ is necessarily a summand of $R^n$. 

\begin{example}
For example, the lines in $\Z^3$ spanned by $(1,1,0)$ and $(1, -1, 0)$ are not compatible. Split submodules $\{U_0, U_1, \dots, U_p\}$ that form a flag $U_0 \subseteq U_1 \subseteq \dots \subseteq U_p$ in $R^n$ are always compatible by \autoref{LemmaNestedSplit}. By definition any subset of a compatible collection is compatible. 
\end{example}

We note that the question of compatibility for split $R$-modules $U$ and $W$ depends on the ambient space. For instance, $U$ and $W$ are always compatible as submodules of $U+W$. 


\begin{example} We note that for a collection of split submodules $\{U_1, U_2, \dots, U_k\}$ of $R^n$, pairwise compatibility does not imply compatibility. Even when $R$ is a field, if we take three distinct coplanar lines, any two will be compatible but all three will not. When $R$ is not a field there are more subtle phenomena. For example, the $\Z$-submodules of $\Z^3$
$$ U_1 = \mathrm{span} \left( \begin{bmatrix} 1\\ 1 \\0  \end{bmatrix} \right),   \qquad U_2 = \mathrm{span} \left( \begin{bmatrix} 1\\0 \\1  \end{bmatrix} \right),  \qquad U_1 = \mathrm{span} \left( \begin{bmatrix} 0\\ 1 \\1  \end{bmatrix} \right)$$ 
are pairwise-compatible, non-coplanar lines, but the sum $U_1 \oplus U_2 \oplus U_3$ is index $2$ in $\Z^3$. 
\end{example} 



The following lemmas reflect the fact that, for collections of submodules with the common basis property, the operations of sum and intersection simplify to the operations of union and intersection of sets of basis elements. Hence, in this context these operations have better properties than for arbitrary submodules, for example, intersection distributes over sums. The slogan is that collections of modules with the common basis property behave like sets (of basis elements).

The next result is straight-forward. 

\begin{lemma}  \label{CommonBasisIntersection} Let $R$ be a PID. Suppose that $U, W$ are split submodules of a free $R$-module $R^n$. Suppose there exists some basis $B$ of $R^n$ such that $U$ is spanned by a subset $B_U \subseteq B$ and $W$ is spanned by a subset $B_W \subseteq B$. Then $U + W$ is spanned by $B_U \cup B_W$, and $U \cap W$ is spanned by $B_U \cap B_W$. \end{lemma}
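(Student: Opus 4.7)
The plan is to verify the two claims in sequence, both following from unpacking the basis hypothesis together with the uniqueness of basis expansions in the free module $R^n$.

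For the sum $U+W$, I would argue directly. Any element of $U+W$ has the form $u+w$ with $u\in U$ and $w\in W$; by hypothesis $u$ is an $R$-linear combination of elements of $B_U$ and $w$ is an $R$-linear combination of elements of $B_W$, so $u+w$ lies in $\mathrm{span}_R(B_U\cup B_W)$. Conversely, since $B_U\subseteq U$ and $B_W\subseteq W$, the set $B_U\cup B_W$ is contained in $U+W$, giving the reverse inclusion. This half is essentially by definition of the sum.

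For the intersection, the inclusion $\mathrm{span}_R(B_U\cap B_W)\subseteq U\cap W$ is immediate since $B_U\cap B_W\subseteq U$ and $B_U\cap B_W\subseteq W$. For the opposite inclusion, I take $v\in U\cap W$ and expand it two ways: first as $v=\sum_{b\in B_U}\alpha_b b$ using the fact that $B_U$ spans $U$, then as $v=\sum_{b\in B_W}\beta_b b$ using the fact that $B_W$ spans $W$. Both expressions are also expansions of $v$ in terms of the full basis $B$, with the coefficients of the remaining basis vectors understood to be zero. Since $B$ is a basis of $R^n$, coefficients are unique, and comparing the two expansions forces $\alpha_b=0$ for every $b\in B_U\setminus B_W$ (and similarly $\beta_b=0$ on $B_W\setminus B_U$). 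Hence $v$ is supported on $B_U\cap B_W$, proving $v\in\mathrm{span}_R(B_U\cap B_W)$.

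There is no real obstacle here; this is a routine consequence of the uniqueness of coordinates in a basis of a free module, which is why the authors flag the lemma as ``straightforward.'' I note that no additional PID-specific input is used in the proof itself beyond what is already folded into the hypothesis that $U$ and $W$ are spanned by subsets of the given basis $B$.
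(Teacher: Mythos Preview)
Your proof is correct and is precisely the routine verification the paper has in mind; the paper itself omits the argument entirely, flagging the result as ``straight-forward.'' Your observation that no PID-specific input is needed beyond the hypothesis is also accurate.
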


In particular,  $U + W$  and $U \cap V$ are summands of $R^n$ and compatible with
$\{U, W \}$.


\begin{lemma} \label{CommonBasisIntersectionSum} Let $R$ be a PID. Suppose that $U,V, W \subseteq R^n$ have the common basis property. Then $$(U+V) \cap W = (U \cap W) + (V \cap W).$$ 
If $U + V = U \oplus V$, then 
$$(U\oplus V) \cap W = (U \cap W) \oplus (V \cap W).$$ 
\end{lemma}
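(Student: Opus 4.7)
The plan is to reduce this module-theoretic identity to the corresponding set-theoretic distributivity law via the common basis. By hypothesis, there exists a basis $B$ of $R^n$ and subsets $B_U, B_V, B_W \subseteq B$ that span $U$, $V$, $W$ respectively.

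First I would apply \autoref{CommonBasisIntersection} repeatedly. It gives that $U + V$ is spanned by $B_U \cup B_V$, and that $(U+V) \cap W$ is spanned by $(B_U \cup B_V) \cap B_W$. On the other side of the equation, $U \cap W$ is spanned by $B_U \cap B_W$ and $V \cap W$ is spanned by $B_V \cap B_W$; since both are spanned by subsets of $B$, one more application shows $(U \cap W) + (V \cap W)$ is spanned by $(B_U \cap B_W) \cup (B_V \cap B_W)$. The first identity then follows from the set-theoretic distributivity
\[
(B_U \cup B_V) \cap B_W = (B_U \cap B_W) \cup (B_V \cap B_W),
\]
since both sides describe the span of the same subset of $B$.

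For the second identity, assume $U + V = U \oplus V$, so $U \cap V = 0$. Then
\[
(U \cap W) \cap (V \cap W) \subseteq U \cap V = 0,
\]
so the sum $(U \cap W) + (V \cap W)$ is direct, proving the claim.

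I do not expect any genuine obstacle: the entire argument is structural, exploiting that once one fixes a common basis, intersections and sums of the relevant submodules correspond to intersections and unions of subsets of that basis, where distributivity is automatic. The only point requiring slight care is that \autoref{CommonBasisIntersection} applies at each stage because every module produced (namely $U+V$, $U \cap W$, $V \cap W$) is spanned by a subset of the same common basis $B$, so all intermediate modules remain compatible.
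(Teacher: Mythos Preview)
Your proof is correct and follows essentially the same approach as the paper: choose a common basis, apply \autoref{CommonBasisIntersection} to both sides, and invoke the set-theoretic distributivity $(B_U \cup B_V) \cap B_W = (B_U \cap B_W) \cup (B_V \cap B_W)$. Your treatment of the direct-sum statement is slightly more explicit than the paper's, which leaves that case implicit.
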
 
\begin{proof} Let $B$ be a basis for $R^n$ with subsets $B_U, B_V, B_W$ spanning $U,V,W$ respectively. By \autoref{CommonBasisIntersection}, $(U+V) \cap W$ is spanned by the basis elements $(B_U \cup B_V) \cap B_W$ and $(U \cap W) + (V \cap W)$ is spanned by the same set $(B_U \cap B_W) \cup (B_V \cap B_W)$. 
\end{proof} 

\begin{lemma} \label{CommonBasisClosure}Let $R$ be a PID, and let $\sigma=\{U_0, \dots, U_p\}$ be a collection of split submodules of $R^n$. Let $\overline{\sigma}$ be the union of $\sigma$ with modules constructed by all (iterated) sums and intersections of elements of $\sigma$.
\begin{enumerate}[(i)]
\item \label{i}  The set  $\sigma$ has the common basis property if and only if  $\overline{\sigma}$ does. 
\item  \label{ii} A submodule $U$ is compatible with $\sigma$ if and only if it is compatible with $\overline{\sigma}$. 
\end{enumerate}
In particular, if $\sigma=\{U_0, \dots, U_p\}$ has the common basis property, modules constructed by (iterated) sums and intersections of the modules $U_i$ are necessarily summands in $R^n$ and compatible with $\sigma$. 
\end{lemma}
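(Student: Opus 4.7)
The plan is to observe that both parts have one trivial direction and one direction that follows by an easy induction from \autoref{CommonBasisIntersection}.

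For the trivial directions: since $\sigma\subseteq\overline\sigma$, any common basis for $\overline\sigma$ is in particular a common basis for $\sigma$, which proves the $(\Leftarrow)$ direction of \autoref{i}. Likewise, if $U$ is compatible with $\overline\sigma$, then $U$ is compatible with the subset $\sigma\subseteq\overline\sigma$, giving the $(\Leftarrow)$ direction of \autoref{ii}.

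For the nontrivial directions, I would suppose $\sigma$ has a common basis $B\subseteq R^n$, with each $U_i$ spanned by a subset $B_i\subseteq B$. Any element of $\overline\sigma$ is obtained from $\sigma$ by a finite sequence of sum and intersection operations, so I would induct on the number of such operations needed to construct it. For the inductive step, suppose $V,W\in\overline\sigma$ are each spanned by some subset $B_V,B_W\subseteq B$; then by \autoref{CommonBasisIntersection} the module $V+W$ is spanned by $B_V\cup B_W\subseteq B$ and $V\cap W$ is spanned by $B_V\cap B_W\subseteq B$. Hence every module in $\overline\sigma$ is spanned by a subset of $B$, proving $(\Rightarrow)$ of \autoref{i}. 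For $(\Rightarrow)$ of \autoref{ii}, if $U$ is compatible with $\sigma$, then the set $\sigma\cup\{U\}$ has a common basis $B'$; applying \autoref{i} to $\sigma\cup\{U\}$ shows $\overline{\sigma\cup\{U\}}\supseteq\overline\sigma\cup\{U\}$ is spanned by subsets of $B'$, so $U$ is compatible with $\overline\sigma$.

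The final sentence of the lemma is immediate: if $\sigma$ has a common basis $B$, then every $V\in\overline\sigma$ is spanned by a subset of $B$, hence is split in $R^n$ (its basis extends to $B$, a basis for $R^n$) and compatible with $\sigma$. The main (mild) obstacle is simply being careful with the induction on the construction of $\overline\sigma$; there is no deeper difficulty since \autoref{CommonBasisIntersection} already carries the entire content of the argument.
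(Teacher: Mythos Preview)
Your proof is correct and follows essentially the same approach as the paper: both derive part~(\ref{i}) by inducting with \autoref{CommonBasisIntersection} to show every element of $\overline{\sigma}$ is spanned by a subset of the common basis $B$, and both derive part~(\ref{ii}) by applying part~(\ref{i}) to $\sigma\cup\{U\}$ and using the containment $\overline{\sigma\cup\{U\}}\supseteq\overline{\sigma}\cup\{U\}$. The paper's proof is simply a terse two-line version of your argument.
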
 

\begin{proof} Part (\ref{i}) follows from \autoref{CommonBasisIntersection}.  
 Part  (\ref{ii}) follows from the observation that a common basis for $\{U\} \cup \sigma$ is also a common basis for $\overline{\{U\} \cup \sigma} \supseteq \{U\} \cup \overline{\sigma}$  by Part (\ref{i}), and vice versa. 
\end{proof}

\begin{example} Let $U$, $W$ be split submodules of $R^n$. If $W$ is contained in a complement of $U$, then $U$ is compatible with $W$, and moreover $U$ is compatible with any split submodule of $W$. In general, however, if $U$ is compatible with $W$ it need not be compatible with split submodules of $W$. For example, the $\Z$-span of $(1, 1, 0)$ in $\Z^3$ is compatible with $W=\Z^3$, but not with the split submodule of $W$ spanned by $(1, -1, 0)$.  
\end{example}


The following result will be used in \autoref{ContractibleTits} to compare certain subcomplexes of higher Tits buildings.

\begin{proposition}\label{mapfdefined} Let $R$ be a PID. 
Let $U,V, W \subseteq R^n$ be submodules with the property that $W+V = R^n$ and $U \subseteq W$. Then $U+V$ is a proper submodule of $R^n$ if and only if $U+(W \cap V)$ is a proper submodule of $W$. 
\end{proposition}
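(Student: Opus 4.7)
The plan is to reformulate the claim via contrapositive and recognize it as a direct instance of the modular (Dedekind) law for submodules, which holds in any lattice of submodules and requires no hypothesis about $R$ being a PID.

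Concretely, I would first rewrite the statement as the equivalence
\[
U+V = R^n \quad \Longleftrightarrow \quad U + (W \cap V) = W.
\]
For the direction $(\Leftarrow)$, I would simply chain inclusions: if $U + (W\cap V) = W$, then $U + V \supseteq (U + (W\cap V)) + V = W + V = R^n$, so $U+V = R^n$. This uses only $W+V = R^n$ and $W\cap V \subseteq V$.

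For the direction $(\Rightarrow)$, the key observation is the modular law applied to $U \subseteq W$: for any $w \in W$, write $w = u+v$ using $U+V = R^n$, with $u \in U$ and $v \in V$. Then $v = w - u$ lies in $W$ because $U \subseteq W$, so in fact $v \in W \cap V$, giving $w \in U + (W\cap V)$. The reverse inclusion $U + (W\cap V) \subseteq W$ is immediate from $U \subseteq W$ and $W\cap V \subseteq W$.

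I do not foresee any real obstacle here; the statement is a general fact about submodule lattices (the modular identity $W \cap (U+V) = U + (W \cap V)$ when $U \subseteq W$), with the $R^n$ and properness just providing the framing. No appeal to PID structure, torsion-freeness, or \autoref{LemmaSplit} is needed, so the proof should be a few lines and fits cleanly before its intended use in \autoref{ContractibleTits}.
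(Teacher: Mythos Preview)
Your proposal is correct and follows essentially the same argument as the paper: both take the contrapositive, deduce $U+V=R^n$ from $U+(W\cap V)=W$ by adding $V$ to both sides, and for the other direction use the element-wise modular-law computation $w=u+v \Rightarrow v=w-u\in W\cap V$. Your remark that the PID hypothesis is irrelevant is accurate and a nice extra observation, but the proof itself is the same.
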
 

\begin{proof} We prove the contrapositive. Suppose that $U+(W \cap V) =W.$ Then
\begin{align*}
U + V = U + \big((W \cap V)  +V\big) = \big(U +(W \cap V) ) + V = W+V = R^n 
\end{align*}
Conversely, suppose that  $U+V=R^n$. Then, given $w \in W$, we can write $w = u+v$ for some $u\in U, v \in V$. But then $v  =w-u$ must be contained in $W$, so $v \in W \cap V$.  We see that $w \in U+(W \cap V)$, so we conclude that $U+(W \cap V) = W$. 
\end{proof}




\subsection{The common basis property and inclusion-exclusion} 

The first goal of this subsection is to prove that a collection of submodules has the common basis property if and only if they satisfy a splitting condition and their ranks satisfy an inclusion-exclusion formula. 

We will use this characterization of the common basis property to deduce a lemma on how the common basis property interacts with flags of submodules.  This lemma will imply that certain subcomplexes of the higher Tits buildings are full subcomplexes. We will need this fullness result in the Morse theory arguments of \autoref{Section-TheBuildings}.  

\subsubsection{Posets and corank}

\begin{notation}  Let $R$ be a PID. Let $\cU =\{U_1, U_2, \dots, U_k\}$ be a collection of submodules of $R^n$. Let $[k]=\{1, 2, 3, \dots, k\}$. For a subet $S \subseteq [k]$, let $U_S$ denote the intersection
$$ U_S = \bigcap_{i \in S} U_i, \quad U_{\varnothing} = R^n$$
\end{notation} 

We note that $U_S \cap U_T = U_{S \cup T}$ for all $S,T \subseteq [k]$. \\

Consider the following two posets. 
\begin{itemize}
\item The poset $\big(\{ S \,| \, S \subseteq [k]\}, \supseteq\big)$ on subsets $S \subseteq [k]$ ordered by reverse inclusion.
 \item The poset $\big(\{ U_S \, | \, S \subseteq [k]\}, \subseteq\big)$ on $R$-submodules $U_S$ ordered by containment. \end{itemize} 

The map 
\begin{align*}
\Phi_{\cU} \colon  \{ S \,| \, S \subseteq [k]\} & \longrightarrow \{ U_S \, | \, S \subseteq [k]\} \\ 
S & \longmapsto U_S
\end{align*} 
may not be injective. It is, however, order-preserving, as clearly $T \subseteq S$ implies $U_S \subseteq U_T$. 



\begin{definition} Let $R$ be a PID. Let $\cU = \{U_1, U_2, \dots, U_k\}$ be a collection of submodules of $R^n$. On each poset we define a \emph{corank function}, 
\begin{align*}
F_{\cU} \colon  \{ S \,| \, S \subseteq [k]\} & \longrightarrow \R \\ 
S  & \longmapsto \rank_R(U_S) - \rank_R\left(\sum_{S \subsetneq T}  U_T \right)
\end{align*} 
\begin{align*}
G_{\cU} \colon  \{ U_S \, | \, S \subseteq [k]\} & \longrightarrow \R \\ 
U_S  & \longmapsto \rank_R(U_S) - \rank_R\left(\sum_{U_T \subsetneq U_S}  U_T \right)
\end{align*} 
\end{definition} 

We use the convention that the sum over an empty index set is the zero submodule.  The value $F_{\cU}(S)$ is a lower bound on number of elements that must be added to a basis for the submodule $\sum_{S \subsetneq T}  U_T $ to obtain a basis for $U_S$, with equality if  $\sum_{S \subsetneq T}  U_T $ is split in $U_S$. Similarly, the value  $G_{\cU}(U_S)$ is a lower bound on the number of elements necessary to extend a basis for the submodule $\sum_{U_T \subsetneq U_S}  U_T $ to a basis for $U_S$, with equality if it is split.  The corank functions are illustrated in two examples in \autoref{Poset-Example}. 

We remark that  there are equalities $$\sum_{S \subsetneq T}  U_T \quad = \quad  \sum_{i \in [k], i \notin S}  U_{S \cup \{i\}}   \quad = \quad \sum_{i \in [k], i \notin S}  U_{S} \cap U_i   $$
so we could have equally defined $F_{\mathcal{U}}$ in terms of this latter sum. 

 \begin{figure}[h!]
    \begin{subfigure}[t]{.4\textwidth} \hspace{-2em}
\includegraphics[scale=3.3]{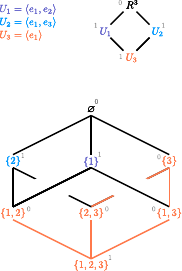}  
      \caption{A collection of submodules of $R^3$ with the common basis property.}
    \end{subfigure} \hfill 
    \begin{subfigure}[t]{.4\textwidth} 
   \includegraphics[scale=3.3]{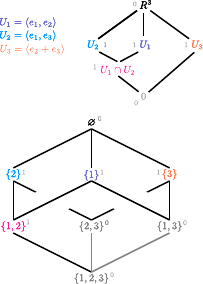} 
      \caption{A collection of submodules of $R^3$ that does not have the common basis property.}
    \end{subfigure}
  \caption{ Let $e_1, e_2, e_3$ denote the standard basis of $R^3$. Two examples are shown of collections of summands $U_1, U_2, U_3$ of $R^3$ and their associated posets. Fibers of $\Phi_{\cU}$ are color-coded according to their image. The values of the corank functions are written next to each element in gray. } 
  \label{Poset-Example}
  \end{figure}
  
  The next lemma shows that the two corank functions capture the same information. However, the function $F_{\cU}$ is technically more useful because its domain is a specific known poset with favourable combinatorial properties (it  is a \emph{boolean lattice}). This will make it easy to apply the Möbius inversion formula, as we will see in the next subsection.

\begin{lemma} \label{FvsG}  Let $R$ be a PID. Let $\cU = \{U_1, U_2, \dots, U_k\}$.
\begin{enumerate}[(i)]
\item \label{preimage} For any $S \subseteq [k]$, the preimage $\Phi_{\mathcal{U}}^{-1}(U_S)$ has a unique minimum element. 
\item \label{LowerSums} For every fixed $S \subseteq [k]$, 
$$ \sum_{S \subsetneq T} U_T = \left\{ \begin{array}{ll} \sum_{U_T \subsetneq U_S} U_T, & \text{$S$ is the unique minimal element in $\Phi_{\mathcal{U}}^{-1}(U_S)$,} \\ U_S, & \text{ otherwise}. \end{array} \right.
$$ 
\item \label{CorankRelationship} For every fixed $S \subseteq [k]$, the corank functions satisfy
$$ F_{\cU}(S) = \left\{ \begin{array}{ll} G_{\cU}(U_S), & \text{$S$ is the unique minimal element in $\Phi_{\mathcal{U}}^{-1}(U_S)$, } \\ 0, & \text{ otherwise}. \end{array} \right.
$$ 
\end{enumerate} 
\end{lemma}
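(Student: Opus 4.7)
The plan is to prove part (i) by exhibiting the minimum element explicitly, deduce part (ii) from (i) via a two-case analysis, and obtain (iii) immediately from (ii). Recall that in the poset $(\{S \subseteq [k]\}, \supseteq)$ the minimum element of a subset is the one containing every other as a set; equivalently, the ``poset-minimum'' representative of $U_S$ is the unique largest $T \subseteq [k]$ with $U_T = U_S$.

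For (i), I would introduce the candidate $T_S := \{i \in [k] : U_S \subseteq U_i\}$. Every $j \in S$ automatically lies in $T_S$ (because $U_S \subseteq U_j$), so $S \subseteq T_S$ and hence $U_{T_S} \subseteq U_S$; the reverse containment is immediate from the definition of $T_S$, giving $T_S \in \Phi_\cU^{-1}(U_S)$. For maximality, any $T$ with $U_T = U_S$ satisfies $U_S = U_T \subseteq U_i$ for every $i \in T$, so $T \subseteq T_S$. This proves $T_S$ is the unique poset-minimum of $\Phi_\cU^{-1}(U_S)$.

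For (ii), I split into the two cases signalled by (i). If $S \neq T_S$, then $T_S \supsetneq S$ contributes $U_{T_S} = U_S$ to the sum $\sum_{S \subsetneq T} U_T$, and since every summand there is contained in $U_S$, the sum equals $U_S$. If $S = T_S$, then for any $T$ with $S \subsetneq T$, part (i) forces $U_T \neq U_S$, which combined with $U_T \subseteq U_S$ gives $U_T \subsetneq U_S$; hence each summand of the left-hand sum appears on the right. For the reverse inclusion, given any submodule $U_{T'} \subsetneq U_S$, set $T''' := T' \cup S$. Then $U_{T'''} = U_{T'} \cap U_S = U_{T'}$, and $T''' \supsetneq S$ since otherwise $T' \subseteq S$ would force $U_{T'} \supseteq U_S$, contradicting $U_{T'} \subsetneq U_S$. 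Thus $U_{T'}$ also appears on the left.

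Part (iii) is then immediate from (ii): when $S = T_S$ we subtract $\rank_R$ of the same submodule from $\rank_R(U_S)$ in both definitions, and when $S \neq T_S$ the sum in the definition of $F_\cU(S)$ equals $U_S$, giving $F_\cU(S) = 0$. I do not anticipate any real obstacle; the one step demanding care is showing, in the second half of (ii), that every submodule strictly below $U_S$ can be represented as $U_{T'''}$ for some $T''' \supsetneq S$, which is precisely where the dichotomy enforced by (i) gets used.
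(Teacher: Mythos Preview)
Your proof is correct and follows essentially the same approach as the paper. The one minor difference is in part (i): the paper identifies the minimum element as $\bigcup_{T \in \Phi_{\mathcal{U}}^{-1}(U_S)} T$, whereas you give the slightly more explicit intrinsic description $T_S = \{i \in [k] : U_S \subseteq U_i\}$; these coincide, and the remaining arguments for (ii) and (iii) are organized the same way.
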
 
\begin{proof}  
Consider Part (\ref{preimage}). For fixed $S$,  the preimage $\Phi_{\mathcal{U}}^{-1}(U_S) = \{T \subseteq [k] \; | \; U_T = U_S\}$ contains the unique minimal element $\bigcup_{T \in \Phi_{\mathcal{U}}^{-1}(U_S)} T,$ since the equality $U_T = U_S$ implies
$$ U_S = \bigcap_{T \in \Phi_{\mathcal{U}}^{-1}(U_S)} U_T = U_{\bigcup_{T \in \Phi_{\mathcal{U}}^{-1}(U_S)} T} .$$

For Part (\ref{LowerSums}), fix $S \subseteq [k]$, and consider the sum $\sum_{U_T \subsetneq U_S} U_T$. Given a submodule $U_{T_0} \subsetneq U_S$, we must have $T_0 \neq S$, and $U_{T_0} = U_{T_0} \cap U_S = U_{T_0 \cup S}$. This implies that $U_{T_0}$ is contained in the sum $\sum_{S \subsetneq T} U_T$, and we have containment 
$$\left(  \sum_{U_T \subsetneq U_S} U_T \right) \subseteq \left(\sum_{S \subsetneq T} U_T \right).$$
Conversely, consider a set $T \supsetneq S$. Then $U_{T} \subseteq U_S$. If we have equality $U_{T_0} = U_S$ for some $T_0 \supsetneq S$, then $\sum_{S \subsetneq T} U_T = U_S$. On the other hand, by definition, there is strict containment $U_{T} \subsetneq U_S$ for every $T \supsetneq S$ precisely when $S$ is the minimal element of $\Phi_{\mathcal{U}}^{-1}(U_S)$. In this case, $U_T \subseteq \sum_{U_T \subsetneq U_S} U_T$ for all $T\supsetneq S$, and so we have equality 
$$  \left(\sum_{S \subsetneq T} U_T \right) = \left(  \sum_{U_T \subsetneq U_S} U_T \right).$$
This establishes Part (\ref{LowerSums}). Part (\ref{CorankRelationship}) now follows from Part (\ref{LowerSums}) and the definition of the corank functions. 
\end{proof} 

\begin{corollary} \label{SumF=SumG} $$  \sum_{\{ S \,| \, S \subseteq [k]\}} F_{\cU}(S) =  \sum_{\{ U_S \, | \, S \subseteq [k]\}} G_{\cU}(U_S). $$ 
\end{corollary}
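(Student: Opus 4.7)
The plan is to derive this identity as an immediate consequence of \autoref{FvsG}(\ref{CorankRelationship}), by partitioning the sum on the left along the fibers of the map $\Phi_{\cU}$.

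First, I would rewrite the left-hand side by grouping the subsets $S \subseteq [k]$ according to which submodule they map to under $\Phi_{\cU}$. Since the collection $\{U_S \mid S \subseteq [k]\}$ is precisely the image of $\Phi_{\cU}$, and every $S \subseteq [k]$ lies in exactly one fiber $\Phi_{\cU}^{-1}(U_S)$, we have
\[
\sum_{S \subseteq [k]} F_{\cU}(S) \;=\; \sum_{U_S \in \operatorname{image}(\Phi_{\cU})} \;\sum_{T \in \Phi_{\cU}^{-1}(U_S)} F_{\cU}(T).
\]
By \autoref{FvsG}(\ref{preimage}), each fiber $\Phi_{\cU}^{-1}(U_S)$ contains a unique minimum element; call it $T_{U_S}$. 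By \autoref{FvsG}(\ref{CorankRelationship}), $F_{\cU}(T) = 0$ for every $T$ in the fiber other than $T_{U_S}$, while $F_{\cU}(T_{U_S}) = G_{\cU}(U_S)$.

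Substituting this into the inner sum collapses it to a single term, giving
\[
\sum_{S \subseteq [k]} F_{\cU}(S) \;=\; \sum_{U_S \in \operatorname{image}(\Phi_{\cU})} G_{\cU}(U_S),
\]
which is exactly the right-hand side. There is no real obstacle here; the entire content of the corollary is the bookkeeping observation that the two corank functions agree up to a redistribution along the fibers of $\Phi_{\cU}$, and this redistribution is precisely what \autoref{FvsG} establishes.
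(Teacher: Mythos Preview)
Your proof is correct and takes essentially the same approach as the paper: both deduce the identity directly from \autoref{FvsG}(\ref{CorankRelationship}) by observing that the left-hand sum is the right-hand sum together with extra terms equal to zero. You have simply written out the fiber-by-fiber bookkeeping that the paper leaves implicit in one sentence.
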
 

\begin{proof} By \autoref{FvsG} Part (\ref{CorankRelationship}), the sum $  \sum_{\{ S \,| \, S \subseteq [k]\}} F_{\cU}(S)$  is  the sum $\sum_{\{ U_S \, | \, S \subseteq [k]\}} G_{\cU}(U_S)$ plus additional terms equal to zero. 
\end{proof}

Given a collection of split submodules $\{U_1, \dots, U_k\}$ in $R^n$, suppose we wish to find a minimal collection of spanning vectors of $R^n$ with the property that some subset of these vectors spans $U_i$ for each $i$. The following lemma relates the corank functions to the minimal number of vectors needed to achieve this, and relates this number to the common basis property. 

\begin{lemma} \label{CorankCondition} Let $R$ be a PID. Fix $n$ and let $\mathcal{U}=\{U_1, U_2, \dots, U_k\}$ be a collection of split submodules of $R^n$. 
\begin{enumerate}[(a)]
\item  \label{ItemA} Then the integer $$  \sum_{\{ S \,| \, S \subseteq [k]\}} F_{\cU}(S) =  \sum_{\{ U_S \, | \, S \subseteq [k]\}} G_{\cU}(U_S) $$ is a lower bound on the size of any spanning set $B$ of $R^n$  with the property that each submodule $U_S$, $S \subseteq [k]$, is equal to the span of a subset of $B$. This lower bound will be realized for some spanning set $B$ if for each $S \subseteq [k]$ the module $\sum_{S \subsetneq T}  U_T$ (which equals $\sum_{i \notin S} (U_S \cap U_i)$) is a split submodule of $U_S$. 
\item  \label{ItemC} Let $R$ be any PID.   Then $\mathcal{U}=\{U_1, U_2, \dots, U_k\}$ has the common basis property if and only if
$$  \sum_{\{ S \,| \, S \subseteq [k]\}} F_{\cU}(S) =  \sum_{\{ U_S \, | \, S \subseteq [k]\}} G_{\cU}(U_S) =n $$
and for each $S \subseteq [k]$ the module $\sum_{S \subsetneq T}  U_T$  is a split submodule of $U_S$.  
\end{enumerate} 
\end{lemma}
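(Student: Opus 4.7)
The plan is to reduce both parts to a counting argument via a \emph{signature} function assigning to each element of a spanning set the subset of $[k]$ indicating which $U_i$ contain it.

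For Part (\ref{ItemA}), first observe that given any spanning set $B \subseteq R^n$ such that each $U_S$ is spanned by a subset, we may assume that $B \cap U_S$ spans $U_S$ for every $S$: if $B' \subseteq B$ spans $U_S$, then $B' \subseteq B \cap U_S \subseteq U_S$, so $B \cap U_S$ spans $U_S$ as well. For each $b \in B$ define the signature $I(b) = \{i \in [k] : b \in U_i\}$, and partition $B$ by $B^{(S)} = \{b \in B : I(b) = S\}$. Since $B \cap U_S = \{b \in B : I(b) \supseteq S\}$, each $b \in B \cap U_S$ with $I(b) \supsetneq S$ lies in $U_{I(b)} \subseteq \sum_{S \subsetneq T} U_T$. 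Hence the image of $B^{(S)}$ generates the quotient $U_S / \sum_{S \subsetneq T} U_T$, and so
\[
|B^{(S)}| \;\geq\; \rank\bigl(U_S / \textstyle\sum_{S \subsetneq T} U_T\bigr) \;=\; F_\cU(S),
\]
using that rank is additive on short exact sequences of finitely generated modules over a PID, and that the minimal number of generators of such a module is at least its rank. Summing over $S$ gives $|B| \geq \sum_S F_\cU(S) = \sum_{U_S} G_\cU(U_S)$ by \autoref{SumF=SumG}.

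For achievability under the splitting hypothesis, I would construct $B$ inductively on the poset $\{S \subseteq [k]\}$ ordered by reverse inclusion. For each $S$, pick a basis $B^{(S)}$ for a complement of $\sum_{S \subsetneq T} U_T$ inside $U_S$; such a complement exists by hypothesis and has size $F_\cU(S)$. A downward induction then shows $U_S$ is spanned by $\bigsqcup_{T \supseteq S} B^{(T)}$: the base case $S = [k]$ is immediate since $\sum_{[k] \subsetneq T} U_T = 0$, and in the inductive step each $U_T$ with $T \supsetneq S$ is spanned by $\bigsqcup_{T' \supseteq T} B^{(T')}$, so $\sum_{T \supsetneq S} U_T$ is spanned by $\bigsqcup_{T' \supsetneq S} B^{(T')}$, and appending $B^{(S)}$ yields a spanning set for $U_S$. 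Taking $S = \varnothing$ shows $B = \bigsqcup_S B^{(S)}$ spans $R^n$ with $|B| = \sum_S F_\cU(S)$.

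For Part (\ref{ItemC}), the forward direction unpacks the common basis property: given a common basis $B$ for $\cU$, set $B_i = B \cap U_i$. By \autoref{CommonBasisIntersection} each $U_S$ is spanned by $\bigcap_{i \in S} B_i$ and each $\sum_{S \subsetneq T} U_T$ is spanned by $\bigcup_{S \subsetneq T} \bigcap_{i \in T} B_i$; both are subsets of the basis $B$, so the latter is a split summand of $U_S$. Writing $B^{(S)} = \{b \in B : I(b) = S\}$, a rank count gives $|B^{(S)}| = F_\cU(S)$, and summing yields $\sum_S F_\cU(S) = |B| = n$. Conversely, if the sum equals $n$ and the splitting conditions hold, Part (\ref{ItemA}) produces a spanning set of $R^n$ of size $n$ in which each $U_i$ is spanned by a subset; since any surjection $R^n \twoheadrightarrow R^n$ of free modules is an isomorphism, this spanning set is in fact a basis, giving the common basis property.

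The main subtlety to manage is the gap between the two posets $\{S \subseteq [k]\}$ and $\{U_S\}$ when the map $\Phi_\cU$ fails to be injective. This is handled uniformly by \autoref{FvsG} and \autoref{SumF=SumG}, which guarantee that $F_\cU$ vanishes on the non-minimal $S$'s in each fiber and that the splitting condition phrased in terms of $\sum_{S \subsetneq T} U_T$ is automatic (namely trivial) for those $S$, so that the bookkeeping via subset signatures in $[k]$ correctly tracks the intrinsic ranks of the submodules $U_S$.
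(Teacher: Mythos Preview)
Your argument is correct and follows essentially the same underlying idea as the paper: partition a common spanning set according to the finest intersection $U_S$ containing each element, and observe that the elements landing ``exactly at $S$'' must generate the quotient $U_S/\sum_{S\subsetneq T}U_T$. The paper organizes this via induction on \emph{height} in the poset $\{U_S\}$ using $G_{\cU}$, whereas you organize it via the \emph{signature} $I(b)=\{i:b\in U_i\}$ on the Boolean lattice using $F_{\cU}$; these are two packagings of the same disjointness observation (the paper's sentence ``if some element $v$ is contained in two distinct submodules of height $h$, then $v$ is contained in their intersection'' is precisely your signature argument). Your version is arguably slightly cleaner since the signature gives a canonical partition of any spanning set, making the lower bound a one-line rank inequality rather than a step-by-step minimality check.

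One small point you should make explicit in the achievability construction: you write $B=\bigsqcup_S B^{(S)}$ and conclude $|B|=\sum_S F_{\cU}(S)$, but disjointness of the chosen $B^{(S)}$ needs a sentence. If $0\neq b\in B^{(S)}$ lies in the chosen complement of $\sum_{S\subsetneq T}U_T$ inside $U_S$, then for any $j\notin S$ one has $b\notin U_{S\cup\{j\}}=U_S\cap U_j$, hence $b\notin U_j$; thus $I(b)=S$ exactly, and distinct $S$ give disjoint $B^{(S)}$. With that line added, your proof is complete.
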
 

See \autoref{Poset-Example} for an illustration of Part (\ref{ItemC}). 

\begin{remark} We collect some remarks about this theorem. 
\begin{itemize} 
\item It is not difficult to check that there is equality $$\sum_{S \subsetneq T}  U_T = \sum_{i \notin S} (U_S \cap U_i).$$ In practice the splitting criterion of \autoref{CorankCondition} part (\ref{ItemC})  can be easier to verify using the second expression.    
\item We proved  the equality $  \sum_{\{ S \,| \, S \subseteq [k]\}} F_{\mathcal{U}}(S) =  \sum_{\{ U_S \, | \, S \subseteq [k]\}} G_{\mathcal{U}}(U_S) $
in \autoref{SumF=SumG}.  
\item The submodules $U_i$ are split by assumption. Thus, for each $S \subseteq [k]$, the intersection of submodules $U_S$ is split by \autoref{LemmaIntersectionSplit}. In fact, by \autoref{CommonBasisClosure}, the submodules $\mathcal{U}$ have the common basis property if and only if the submodules $\{U_S \; | \; S \subseteq [k]\}$ do. 
\item It follows from \autoref{LemmaNestedSplit} that $\sum_{S \subsetneq T}  U_T$ is split in $U_S$ if and only if it is split in $R^n$.  We can simply say $\sum_{S \subsetneq T}  U_T$ ``is split'' without ambiguity. 
\end{itemize} 
\end{remark} 

\begin{remark} \label{RemarkSplit} We note that the splitting criterion of \autoref{CorankCondition} part (\ref{ItemC})  is automatic for fields. This splitting criterion, however, is \emph{not} the obstruction to extending the main results of this paper from fields to general PIDs. Our applications of \autoref{CorankCondition}---and the results in the remainder of this section---will hold for general PIDs.   \end{remark}

\begin{proof}[Proof of \autoref{CorankCondition}] 

We will approach this proof using induction (on height) over the poset of submodules $\{U_S\}$. 

Since it is convenient in this proof to analyze the poset $\{U_S\}$ instead of the poset of subsets of $[k]$, we first reframe the splitting condition in terms of the former poset structure.  \autoref{FvsG} part (\ref{LowerSums}) states that the submodule $\sum_{S \subsetneq T}  U_T$ is equal to $\sum_{U_T \subsetneq U_S} U_T$ or $U_S$.   Thus, the module $\sum_{S \subsetneq T}  U_T$ is split in $U_S$ for every $S\subseteq [k]$ if and only if the submodule $\sum_{U_T \subsetneq U_S} U_T$ is split in $U_S$ for every $S\subseteq [k]$.

Let us call a set $B \subseteq R^n$ a \emph{common spanning set} for $\{U_S\}$ if $B$ spans $R^n$ (as an $R$-module), and a subset of $B$ spans $U_S$ for each $S \subseteq [k]$.  We will construct a minimal (in cardinality) common spanning set $B$  inductively, proceeding by height in the poset $\{U_S\}$ of submodules under containment.  Recall that the height of an element $x$ in a poset is the maximal length $h$ of a chain $x_0 < x_1 < \dots <x_h=x$.  We wish, at step $h$, to build a set $B_{h} \subseteq R^n$ with the property that a subset of $B_{h}$ spans each submodule $U_S$ that has height at most $h$---and to argue that our set $B_{h}$ is minimal in size among all such subsets.  This procedure will, once we reach the height of the submodule $U_{\varnothing}=R^n$, yield a minimal common spanning set of cardinality at least $ \sum_{U_S} G_{\cU}(U_S)$, with equality when the splitting criterion is satisfied. This inductive argument will therefore establish part (\ref{ItemA}) of the lemma. 

For the base case, we choose a basis set $B_0$ for the element $U_{[k]} = \bigcap_i U_i$ of height 0; it may be empty if $U_{[k]}=0$. 
 
We now describe the inductive step, in which we construct the spanning set $B_{h}$ for all modules of height $h$ or less.  Fix a submodule $U_S$ of height $h$. By induction, $B_{h-1}$ already contains a spanning set for $\sum_{U_T \subsetneq U_S} U_T$.  Choose the minimal number of elements of $U_S$ necessary to extend this to a spanning set for $U_S$. If $\sum_{U_T \subsetneq U_S} U_T$ is a split submodule of $U_S$, this can be accomplished with exactly $G_{\cU}(U_S)$ elements of $U_S$. If $\sum_{U_T \subsetneq U_S} U_T$  is not split, it will require more than $G_{\cU}(U_S)$ elements.  We take $B_h$ to be the union of $B_{h-1}$ and these additional spanning elements for each submodule of height $h$. 

We must check that, at each step $h$, the resultant set $B_h$ is minimal in cardinality.  To see this, suppose we had a set $A$ of elements with the property that some subset of $A$ spans every submodule of height $\leq h$. Then $A$ must contain a set of elements spanning all submodules of height $<h$. Moreover, $A$ must contain additional elements that extend the spanning set of  $\sum_{U_T \subsetneq U_S} U_T$ to a spanning set of $U_S$ for each submodule $U_S$ of height $h$. These new sets of elements must be pairwise disjoint:  if some element $v$ is contained in two distinct submodules of height $h$, then $v$ is contained in their intersection, and therefore in a submodule of strictly smaller height in the poset. This proves minimality for $B_h$. 

By construction, the set $B_h$ has cardinality at least $\sum_{\mathrm{height}(U_S) \leq h} G_{\cU}(U_S)$, with equality if  $\sum_{U_T \subsetneq U_S} U_T$ is split for all $U_S$ of height at most $h$. This concludes part (\ref{ItemA}). 

We turn to part (\ref{ItemC}). By \autoref{CommonBasisClosure}, the collection $\mathcal{U}$ has the common basis property if and only if its closure under iterated sums and intersections does. In particular, it is a necessary condition that the sum $\sum_{U_T \subsetneq U_S} U_T$ be split for every $S$. So we assume that the submodule $\sum_{U_T \subsetneq U_S} U_T$ is split for all $S$, and we will show, under this assumption, that $\mathcal{U}$ has the common basis property if and only if $\sum_{U_S} G_{\cU}(U_S) =n$. 

By part (\ref{ItemA}), we have a minimal common spanning set $B$ for $\mathcal{U}$ of cardinality $|B|=\sum_{U_S} G_{\cU}(U_S)$. In particular $B$ is a spanning set for $R^n$, and so it is a basis for $R^n$ if and only if $|B|=n$. If $B$ is not a basis for $R^n$, then $|B|>n$, and by minimality of $B$ no common basis for $\mathcal{U}$ can exist.  The theorem follows. 
\end{proof}


\subsubsection{The inclusion-exclusion property} 
We will use \autoref{CorankCondition} to give a new characterization of the common basis property in terms of the inclusion-exclusion formula. The key tool to deduce this equivalence  is M\"obius inversion.

The M\"obius inversion theorem is due to Rota \cite{Rota}. For this version of the theorem, see (for example) Kung--Rota--Yan \cite[3.1.2. M\"obius Inversion Formula]{KungRotaYan}; we specialize to the case of a finite poset. 

\begin{theorem}[Rota \cite{Rota}] \label{MobiusInversion}
 Let $P$ be a finite poset. Then there exists an invariant of the poset $P$, a function $\mu\colon  P\times P \to \R$, called a  \emph{M\"obius function}, with the following property. If $f$ and $g$ are real-valued functions defined on $P$, then 
$$ f(y) = \sum_{x \leq y} g(x) \; \; \text{ for all $y \in P$}  \qquad \text{if and only if} \qquad g(y) = \sum_{x \leq y} \mu(x,y) f(x)  \; \;  \text{ for all $y \in P$} .$$ 
\end{theorem}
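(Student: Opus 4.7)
The plan is to construct the Möbius function explicitly by a recursion over the poset, verify a single orthogonality identity, and then deduce the inversion equivalence by a direct double-sum swap. This is a classical argument and does not depend on any of the algebraic machinery developed earlier in the paper; finiteness of $P$ is what makes everything work.

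First I would define $\mu\colon P\times P \to \R$ by the rules: set $\mu(x,y) = 0$ whenever $x \not\leq y$; set $\mu(y,y) = 1$; and for $x < y$, set
$$\mu(x,y) \;=\; -\sum_{x < z \leq y} \mu(z,y).$$
Since $P$ is finite, inducting on the length of the longest chain from $x$ to $y$ shows that this recursion terminates and unambiguously defines $\mu$. Reading off the recursion, for every $x \leq y$ one has the orthogonality identity
$$\sum_{x \leq z \leq y} \mu(z,y) \;=\; \delta_{x,y},$$
since for $x = y$ the sum is $\mu(y,y) = 1$, and for $x < y$ the sum is $\mu(x,y) + \sum_{x < z \leq y} \mu(z,y) = 0$ by construction. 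A symmetric recursion (or, equivalently, the fact that in the finite-dimensional incidence algebra a right inverse of the zeta element $\zeta(x,y) = [x\leq y]$ is automatically a two-sided inverse) yields the companion identity $\sum_{x \leq z \leq y} \mu(x,z) = \delta_{x,y}$.

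For the forward implication, assume $f(y) = \sum_{x \leq y} g(x)$ for all $y \in P$, and fix $y$. Then
$$\sum_{x \leq y} \mu(x,y) f(x) \;=\; \sum_{x \leq y} \mu(x,y) \sum_{w \leq x} g(w) \;=\; \sum_{w \leq y} g(w) \sum_{w \leq x \leq y} \mu(x,y) \;=\; \sum_{w \leq y} g(w)\,\delta_{w,y} \;=\; g(y),$$
where we interchanged the order of summation (valid because $P$ is finite) and applied the first orthogonality identity. For the reverse implication, assume $g(y) = \sum_{x \leq y} \mu(x,y) f(x)$; then a parallel computation
$$\sum_{x \leq y} g(x) \;=\; \sum_{x \leq y} \sum_{w \leq x} \mu(w,x) f(w) \;=\; \sum_{w \leq y} f(w) \sum_{w \leq x \leq y} \mu(w,x) \;=\; \sum_{w \leq y} f(w)\,\delta_{w,y} \;=\; f(y)$$
uses the companion orthogonality identity to recover $f$ from $g$.

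The main subtlety, such as it is, lies in fixing conventions so that the orthogonality identity coming out of the recursion is exactly the one needed for the double-sum swap in each direction; once the bookkeeping is consistent the argument is mechanical. A more conceptual route is to work inside the incidence algebra $I(P) = \{\alpha\colon P\times P \to \R \mid \alpha(x,y) = 0 \text{ unless } x\leq y\}$ with convolution product, observe that $\zeta$ is upper-triangular with $1$'s on the diagonal and hence a unit, define $\mu := \zeta^{-1}$, and reinterpret ``$f = g*\zeta$ iff $g = f*\mu$'' as the desired equivalence; this is simply a repackaging of the recursive construction above.
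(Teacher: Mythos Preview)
Your proof is correct and is the standard argument. Note, however, that the paper does not actually prove this theorem: it is stated as a result of Rota and cited to \cite{Rota} and \cite[3.1.2]{KungRotaYan}, with no proof given in the text. So there is no ``paper's own proof'' to compare against; you have simply supplied the classical proof that the paper chose to omit.

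One small remark on your write-up: the recursion you chose defines $\mu$ ``from above'' (recursing on the first variable), which directly yields $\sum_{x\leq z\leq y}\mu(z,y)=\delta_{x,y}$, the identity you need for the forward implication. For the reverse implication you invoke the companion identity $\sum_{x\leq z\leq y}\mu(x,z)=\delta_{x,y}$, and you correctly flag that this requires an extra step (either a symmetric recursion or the observation that a one-sided inverse of $\zeta$ in the finite-dimensional incidence algebra is two-sided). That step is genuine but routine; your incidence-algebra remark at the end is exactly the cleanest way to handle it.
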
 

\begin{proposition}[Rota {\cite[Proposition 3 (Duality)]{Rota}}] \label{RotaDuality}  If $P$ is a finite poset and $P^{\star}$ is its opposite poset, then $\mu_P(x,y) = \mu_{P^{\star}}(y,x)$. 
\end{proposition}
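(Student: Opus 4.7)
The plan is to deduce the duality from uniqueness of the M\"obius function, using that passing to the opposite poset interchanges the two recursions which characterize $\mu$.

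First, I would extract two explicit recurrences for $\mu_P$ from \autoref{MobiusInversion}. Applying the inversion formula with $g(x) := \delta_{x, y_0}$ for a fixed $y_0 \in P$ forces $f(z) = \mathbbm{1}[y_0 \leq_P z]$, and the conclusion of the theorem specializes to
\[
\delta_{y_0, z} \;=\; \sum_{y_0 \leq_P x \leq_P z} \mu_P(x, z).
\]
Running the inversion in the opposite direction starting from $f(x) := \delta_{x, x_0}$ analogously yields
\[
\delta_{x_0, z} \;=\; \sum_{x_0 \leq_P x \leq_P z} \mu_P(x_0, x).
\]
A short induction on the length of a maximal chain in the interval $[x_0, z]$ shows that either recurrence alone pins $\mu_P$ down uniquely; I will only need the second.

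Next, I would carry the first recurrence over to the opposite poset and translate back. Since $u \leq_{P^\star} v$ if and only if $v \leq_P u$, any chain $y_0 \leq_{P^\star} x \leq_{P^\star} z$ in $P^\star$ is the same datum as a reversed chain $z \leq_P x \leq_P y_0$ in $P$. Relabeling $z \mapsto x$ and $y_0 \mapsto y$, the first recurrence applied inside $P^\star$ becomes
\[
\delta_{x, y} \;=\; \sum_{x \leq_P c \leq_P y} \mu_{P^\star}(c, x), \qquad \text{valid for all } x \leq_P y \text{ in } P.
\]

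Finally, I would define $\nu \colon P \times P \to \R$ by $\nu(a, b) := \mu_{P^\star}(b, a)$, extending by $0$ when $a \not\leq_P b$. The displayed identity then reads $\delta_{x, y} = \sum_{x \leq_P c \leq_P y} \nu(x, c)$, which is exactly the second recurrence characterizing $\mu_P$. By uniqueness, $\nu = \mu_P$, and therefore $\mu_P(x, y) = \mu_{P^\star}(y, x)$. The only real subtlety in this argument is the variable bookkeeping when translating between $P$ and $P^\star$; once that is in hand, the proposition is formal, reflecting the fact that passing to $P^\star$ interchanges the roles of left and right inverse to the zeta function in the incidence algebra.
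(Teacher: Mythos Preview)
The paper does not supply its own proof of this proposition; it is simply quoted from Rota. Your argument is correct and is essentially the standard one: you extract from the inversion theorem the two recursive characterizations of $\mu_P$ (summing over the top, resp.\ bottom, variable in an interval), observe that passing to $P^\star$ swaps them, and conclude by uniqueness. One small remark on exposition: in your derivation of the second recurrence from \autoref{MobiusInversion}, you should say explicitly that you are \emph{defining} $g(y) := \sum_{x \leq y} \mu_P(x,y)\,\delta_{x,x_0}$ and then invoking the ``if'' direction of the theorem to recover $f(z) = \sum_{x \leq z} g(x)$; as written, ``starting from $f$'' and ``running the inversion in the opposite direction'' is slightly ambiguous. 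Otherwise the bookkeeping is fine, and your closing remark about left/right inverses of $\zeta$ in the incidence algebra is exactly the conceptual content of Rota's original proof.
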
 

The following theorem is the statement of Rota's result \cite[Corollary (Principle of Inclusion-Exclusion)]{Rota} combined with  \cite[Proposition 3 (Duality)]{Rota}(here \autoref{RotaDuality}). 
 
\begin{theorem}[Rota \cite{Rota}] \label{MobiusSubsets}   When $P$ is the poset of subsets of a finite set $[k]$ under inclusion, then both $P$ and its opposite poset (ordered by reverse inclusion) have M\"obius function
$$ \mu(S,T) = (-1)^{|S|-|T|}.$$
\end{theorem}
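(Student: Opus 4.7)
The plan is to verify the formula by checking that $(S,T) \mapsto (-1)^{|S|-|T|}$ (extended by $0$ when $S \not\subseteq T$) satisfies the defining recursion for the M\"obius function of the poset $P$ of subsets of $[k]$ under inclusion, and then to deduce the case of the opposite poset by invoking \autoref{RotaDuality}.

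First, I would recall that the M\"obius function $\mu_P$ of a finite poset is uniquely determined by the conditions $\mu_P(x,x) = 1$, $\mu_P(x,y) = 0$ when $x \not\leq y$, and $\sum_{x \leq z \leq y} \mu_P(x,z) = 0$ whenever $x < y$ (the defining recursion follows by solving for $\mu_P(x,y)$, and uniqueness is by induction on the length of a maximal chain in $[x,y]$). To verify the formula for $P$, fix $S \subseteq T$ in $P$ and set $n = |T| - |S|$; the interval $[S,T]$ is naturally isomorphic as a poset to the boolean lattice $2^{[n]}$ via $U \mapsto U \setminus S$. In particular, the number of $U$ with $S \subseteq U \subseteq T$ and $|U| - |S| = j$ is $\binom{n}{j}$.

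The key computation is then the binomial identity
$$\sum_{S \subseteq U \subseteq T} (-1)^{|U| - |S|} \;=\; \sum_{j=0}^{n} \binom{n}{j} (-1)^j \;=\; (1-1)^n,$$
which vanishes for $n \geq 1$ and equals $1$ when $S = T$. This shows that the proposed function satisfies the recursion characterizing $\mu_P$, so by uniqueness it must equal $\mu_P$.

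For the opposite poset $P^\star$ (ordered by reverse inclusion), I would simply invoke \autoref{RotaDuality}, which gives $\mu_{P^\star}(S,T) = \mu_P(T,S) = (-1)^{|T|-|S|} = (-1)^{|S|-|T|}$, completing the proof. No step here poses a genuine obstacle: the whole argument reduces to the binomial theorem once one unpacks the recursive definition of the M\"obius function, and this is exactly why the result is attributed to a classical source rather than reproved from scratch. An equivalent and equally short alternative would be to observe that $P$ is the $k$-fold poset product of the two-element chain $\mathbf{2}$, with $\mu_\mathbf{2}(0,1) = -1$, and appeal to multiplicativity of M\"obius functions under products.
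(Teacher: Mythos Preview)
Your proof is correct and is the standard verification via the binomial identity. Note, however, that the paper does not actually prove this statement: it is stated with attribution to Rota \cite{Rota} and no proof is given, so there is no ``paper's own proof'' to compare against. Your argument supplies exactly the kind of elementary verification one would expect for this classical fact, and your invocation of \autoref{RotaDuality} for the opposite poset matches how the paper packages the result.
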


M\"obius inversion generalizes the inclusion-exclusion principle for a collection of sets. We will use this result to relate the inclusion-exclusion formula to the common basis property.  

\begin{definition} \label{DefnInclusionExclusionProperty} Let $R$ be a PID. Fix $n$ and let $\cU = \{U_1, U_2, \dots, U_k\}$ be a collection of  submodules of $R^n$. Recall that, for $S \subseteq [k]$, we write $U_S$ for the intersection $\bigcap_{i\in S} U_i$ and $U_{\varnothing}=R^n$. Then we say that the submodules $\mathcal{U}$  have the \emph{inclusion-exclusion property} if for all $S \subseteq [k]$, 
$$\text{ rank$\left(\sum_{i \notin S} U_S \cap U_i \right)$} = \sum_{T\supsetneq S} (-1)^{|S|-|T| +1} \; \rank(U_T). 
$$ 
Equivalently, for all $S \subseteq [k]$, 
 $$\Big(\text{corank of $\sum_{i \notin S} (U_S \cap U_i) $ in $U_S$}\Big)  = \sum_{T\supseteq S} (-1)^{|S|-|T|} \; \rank(U_T). 
$$ 
\end{definition}

We note that  $ \Big(\text{corank of $\sum_{i \notin S} (U_S \cap U_i) $ in $U_S$}\Big)$ is precisely the quantity $F_{\cU}(S)$.

Observe that, if we specialize \autoref{DefnInclusionExclusionProperty} to the subset $S=\varnothing$, the inclusion-exclusion property states
$$ \rank(U_1 + U_2 + \dots +  U_k) = \sum_{\varnothing \neq T \subset [k]} (-1)^{|T|+1} \; \rank(U_T),$$ equivalently,   
$$F_{\cU}(\varnothing)  = \Big(\text{corank of $(U_1 + U_2 + \dots +  U_k)$ in $R^n$}\Big)  = \sum_{T \subseteq [k]} (-1)^{|T|} \; \rank(U_T).$$

We now prove one of the main results of this section, a characterization of the common basis property in terms of the inclusion-exclusion property. 

\begin{theorem} \label{CBPCriterionPID} Let $R$ be a PID. Fix $n$ and let $\cU = \{U_1, U_2, \dots, U_k\}$ be a collection of split submodules of $R^n$.  The set $\{U_1, U_2, \dots, U_k\}$ has the common basis property if and only if it satisfies the inclusion-exclusion property in the sense  of \autoref{DefnInclusionExclusionProperty} and for each $S \subseteq [k]$ the submodule $\sum_{S \subsetneq T}  U_T$ (which equals $\sum_{i \notin S} (U_S \cap U_i)$) is a summand of $R^n$. 
\end{theorem}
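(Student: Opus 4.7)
My plan is to invoke \autoref{CorankCondition}(\ref{ItemC}), which for a collection of split submodules characterizes the common basis property as the splitting condition on each $\sum_{S \subsetneq T} U_T$ together with the single numerical identity $\sum_{S \subseteq [k]} F_{\cU}(S) = n$. Since both sides of the theorem already include the splitting condition, this reduces the task to showing (a) that inclusion-exclusion plus splitting imply $\sum_S F_{\cU}(S) = n$, and (b) that the common basis property implies both the splitting condition and the full inclusion-exclusion formula.

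For (a), I would sum the equivalent second form $F_{\cU}(S) = \sum_{T \supseteq S}(-1)^{|T|-|S|}\rank(U_T)$ of the inclusion-exclusion property over all $S \subseteq [k]$ and interchange the order of summation. The coefficient of each $\rank(U_T)$ becomes $\sum_{S \subseteq T}(-1)^{|T|-|S|} = (1-1)^{|T|}$, which vanishes unless $T = \emptyset$. The sum therefore collapses to $\rank(U_\emptyset) = n$, and \autoref{CorankCondition}(\ref{ItemC}) then delivers the common basis property.

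For (b), the splitting condition is immediate from \autoref{CommonBasisClosure}: sums and intersections of a compatible collection are themselves compatible summands of $R^n$. To verify the inclusion-exclusion identity, I would fix a common basis $B$ for $R^n$ with subsets $B_i \subseteq B$ spanning each $U_i$, and set $I(b) = \{i \in [k] : b \in B_i\}$ for each $b \in B$. Iterated application of \autoref{CommonBasisIntersection} shows that $U_T$ is spanned by $\{b : I(b) \supseteq T\}$ and $\sum_{S \subsetneq T} U_T$ by $\{b : I(b) \supsetneq S\}$, so all relevant ranks are cardinalities of subsets of $B$ and $F_{\cU}(S) = |\{b : I(b) = S\}|$. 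Substituting $\rank(U_T) = |\{b : I(b) \supseteq T\}|$ into the right-hand side of the inclusion-exclusion formula and swapping order of summation reduces the inner sum, for each $b$, to $\sum_{T : S \subseteq T \subseteq I(b)}(-1)^{|T|-|S|} = (1-1)^{|I(b) \setminus S|}$, which vanishes unless $I(b) = S$. The total therefore equals $|\{b : I(b) = S\}| = F_{\cU}(S)$, as required.

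The argument presents no deep obstacle: it reduces entirely to the alternating-sum identity $(1-1)^m = 0$ for $m \geq 1$ on the Boolean lattice (equivalently, Möbius inversion as in \autoref{MobiusInversion} and \autoref{MobiusSubsets}). The main conceptual point is that the splitting hypothesis is precisely what makes the corank $F_{\cU}(S)$ behave like a genuine counting function of basis vectors with $I(b) = S$, which is what connects the numerical inclusion-exclusion identity to the combinatorics of a common basis.
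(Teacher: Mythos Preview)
Your proof is correct and close in spirit to the paper's, though organized differently. The paper packages both directions into a single Möbius-inversion equivalence: it argues that the common basis property is equivalent to the system of equations $\rank(U_S) = \sum_{T \supseteq S} F_{\cU}(T)$ for all $S$ (together with splitting), and then invokes Rota's Möbius inversion formula on the Boolean lattice to identify this system with the inclusion-exclusion property. You instead handle the two directions separately: for (a) you sum the inclusion-exclusion identities and collapse via $(1-1)^{|T|}$ to recover $\sum_S F_{\cU}(S) = n$, then appeal to \autoref{CorankCondition}(\ref{ItemC}); for (b) you bypass Möbius inversion entirely by reading off the inclusion-exclusion identity from an explicit common basis via the function $I(b)$. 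Your route is slightly more elementary and arguably cleaner in direction (b), since the paper's justification that the common basis property yields the full system of rank equations (not merely the case $S = \varnothing$) is somewhat compressed. Both arguments ultimately rest on the same combinatorial identity on the Boolean lattice.
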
 

As noted in \autoref{RemarkSplit}, if $R$ is a field, the splitting condition is automatic. Over a field $F$, a collection of subspaces of $F^n$ has the common basis property if and only it satisfies the inclusion-exclusion property. (This, however, is not the point in our paper where the proofs of the main theorems break down for non-field PIDs). 

\begin{proof}[Proof of \autoref{CBPCriterionPID}] The submodules $\mathcal{U}=\{U_1, U_2, \dots, U_k\}$ have the common basis property if and only if each of their subsets does, which  by \autoref{CorankCondition} holds if and only if
\begin{itemize}
\item $\sum_{i \notin S} (U_S \cap U_i)$  is split for every $S$. \\

\item   $ \displaystyle \; \rank(U_S) = \sum_{S \subseteq T \subseteq [k]} F_{\cU}(T)$ for all $S \subseteq [k]$.
\end{itemize} 
Note that the second item includes the statement that $\displaystyle n =  \sum_{\varnothing \subseteq T \subseteq [k]} F_{\cU}(T)$.

By the M\"obius inversion theorems \autoref{MobiusInversion} and \autoref{MobiusSubsets}, this second condition holds if and only if 
$$ F_{\cU}(S) = \sum_{S \supseteq T} (-1)^{|S|-|T|} \rank(U_T) \qquad \text{for all $ \varnothing \subseteq S \subseteq [k].$}$$
This is precisely the inclusion-exclusion property. 
\end{proof} 

\begin{example} Consider a collection $\{U_1, \dots, U_k\}$ of $R$-submodules of $R^n$. In general, in the statement of \autoref{CBPCriterionPID} it is not enough to replace the inclusion-exclusion property of \autoref{DefnInclusionExclusionProperty} with the single condition that $$ \rank(U_1 + U_2 + \dots +  U_k) = \sum_{\varnothing \neq S \subseteq [k]} (-1)^{|S|+1} \; \rank(U_S). $$ 
Consider, for example, the free $R$-module $R^2 = \langle e_1, e_2\rangle$ and the submodules 
$$ U_1 = R^2, \quad U_2 =  \langle e_1 \rangle,  \quad U_3 =  \langle e_2\rangle,  \quad U_4 =  \langle e_1 + e_2\rangle. $$ 
These submodules fail to have the common basis property. However, the reader can verify that 
$$ \rank(U_1 + U_2 + U_3 +  U_4) = 2 =  \sum_{\varnothing \neq S \subseteq [k]} (-1)^{|S|+1} \; \rank(U_S). $$ 
Here the right-hand-side has nonzero terms 
\begin{align*} 
& \rank(U_1) + \rank(U_2) + \rank(U_3) + \rank(U_4)  - \rank(U_{12}) -  \rank(U_{13}) - \rank(U_{14})  \\ 
& = 2+1+1+1-1-1-1 \\ 
&=2 .
\end{align*} 
In keeping with \autoref{CBPCriterionPID}, the formula 
$$\text{ rank$\left(\sum_{i \notin S} U_S \cap U_i \right)$} = \sum_{T\supsetneq S} (-1)^{|S|-|T| +1} \; \rank(U_T). 
$$ 
does fail for $S=\{1\}$, as the submodules $U_{12},  U_{13},  U_{14}$ do not satisfy the inclusion-exclusion formula. 
\end{example}

\subsubsection{Flags and the common basis property} 

In future sections, we will need the following lemma to prove that certain subcomplexes of our higher Tits buildings are full subcomplexes. This lemma is our main application of \autoref{CBPCriterionPID}.  

\begin{lemma} \label{BigFlagLemma}  Let $R$ be a PID.  Fix $n$. Suppose that $\{U_1, \dots, U_{\ell}\}$ is a collection of submodules of $R^n$, and $V_1 \subseteq V_2 \subseteq \dots \subseteq V_r$ is a flag in $R^n$.  If $\{U_1, \dots, U_{\ell}, V_i\}$ has the common basis properrty for every $1 \leq i \leq r$, then 
$\{U_1, \dots, U_{\ell}, V_1, \dots, V_r\}$ has the common basis property. 
\end{lemma}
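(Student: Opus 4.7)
The plan is to proceed by induction on $r$; the base case $r = 1$ is immediate. For the inductive step, I would apply the inductive hypothesis twice, to the sub-flags $V_1 \subseteq \cdots \subseteq V_{r-1}$ and $V_1 \subseteq \cdots \subseteq V_{r-2} \subseteq V_r$ (both of length $r-1$), obtaining common bases for $\{U_1, \dots, U_\ell, V_1, \dots, V_{r-1}\}$ and for $\{U_1, \dots, U_\ell, V_1, \dots, V_{r-2}, V_r\}$. Regarding $\{U_1, \dots, U_\ell, V_1, \dots, V_{r-2}\}$ as an enlarged ``$U$-collection'' and $V_{r-1} \subseteq V_r$ as a two-element flag then reduces the inductive step to the case $r = 2$.

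The case $r = 2$ will be verified directly via \autoref{CBPCriterionPID} for $\{U_1, \dots, U_\ell, V_1, V_2\}$. Because $V_1 \cap V_2 = V_1$, intersections of subsets take only three forms: $U_A$, $U_A \cap V_1$, or $U_A \cap V_2$, and each is a summand of $R^n$ by \autoref{CommonBasisClosure} applied to the hypothesized CBP of $\{U_1, \dots, U_\ell, V_i\}$. The splitting of $\sum_{T \supsetneq S} W_T$ is immediate in every case except $S = A \cup \{V_2\}$, where the sum simplifies to $\sum_{i \notin A}(U_A \cap U_i \cap V_2) + (U_A \cap V_1)$. I would rewrite this as $V_2 \cap \bigl(\sum_{i \notin A}(U_A \cap U_i) + (U_A \cap V_1)\bigr)$ by applying distributivity inside the CBP $\{U_1, \dots, U_\ell, V_2\}$ (\autoref{CommonBasisIntersectionSum}) and then the modular law (using $U_A \cap V_1 \subseteq V_2$); the inner factor lies in the closure of $\{U_1, \dots, U_\ell, V_1\}$ and hence is a summand of $R^n$, so its intersection with $V_2$ is a summand by \autoref{LemmaIntersectionSplit}.

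For the inclusion-exclusion condition, the alternating sum $\sum_{T \supseteq S}(-1)^{|T|-|S|}\text{rank}(W_T)$ collapses substantially because $V_1 \cap V_2 = V_1$ forces many cancellations, and in each case reduces to an expression involving only the $F$-values for the two hypothesized CBPs. The only case requiring nontrivial work is once again $S = A \cup \{V_2\}$, where using the rank identity $\text{rank}(X+Y) + \text{rank}(X \cap Y) = \text{rank}(X) + \text{rank}(Y)$ the check reduces to the submodule identity $\bigl(\sum_{i \notin A}(U_A \cap U_i \cap V_2)\bigr) \cap (U_A \cap V_1) = \sum_{i \notin A}(U_A \cap U_i \cap V_1)$. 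This follows from two applications of distributivity (\autoref{CommonBasisIntersectionSum}, first inside $\{U_1, \dots, U_\ell, V_2\}$ and then inside $\{U_1, \dots, U_\ell, V_1\}$) combined with $V_1 \subseteq V_2$.

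The main obstacle I expect is the bookkeeping: \autoref{CommonBasisIntersectionSum} only applies to submodules lying in a common CBP collection, so every modular-law and distributivity manipulation must be arranged so that only the hypothesized CBPs---and not the one we are trying to establish---are invoked at each step.
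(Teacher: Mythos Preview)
Your proposal is correct and follows essentially the same approach as the paper. Both arguments reduce to the case $r=2$ by induction (your reduction is in fact stated more carefully than the paper's terse ``after reindexing''), and then verify \autoref{CBPCriterionPID} by the same four-way case split on $S \cap \{k-1,k\}$; your handling of the delicate case $S = A \cup \{k\}$ via the rank identity, distributivity inside the two hypothesized common-basis systems, and the modular law matches the paper's computation line for line.
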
   

\begin{proof}   The claim is vacuous when $r=1$. We may proceed by induction on the length  $r$ of the flag. To perform the inductive step, it suffices (after reindexing) to consider a flag of length $r=2$. 

Let $r=2$. Assume $\{U_1, \dots, U_{\ell}, V_i\}$ has the common basis property for $i=1, 2$.  For notational convenience set $k={\ell}+2$, so  $U_{k-1} = U_{{\ell}+1}=V_1$ and $U_k = U_{{\ell}+2}=V_2$. 
By \autoref{CBPCriterionPID}, we must verify two things: that $\mathcal{U}=\{U_1, \dots, U_{k-2}, U_{k-1}=V_1, U_k=V_2\}$ has the inclusion-exclusion property, and that for all $S \subseteq [k]$ the submodule $\sum_{i \notin S} (U_S \cap U_i)$  is split. 

We comment that verifying these two properties is a long but conceptually straight-forward computation. For completeness we will write it out explicitly. The calculation will involve direct manipulation of sums of intersections $U_T$ using the following shortlist of elementary observations: 
\begin{itemize}
\item Pairs of $R$-submodules $M,N$ of $R^n$ always satisfy the inclusion-exclusion formula; 
$$ \mathrm{rank}(M+N) =  \mathrm{rank}(M) +  \mathrm{rank}(N) -  \mathrm{rank}(M\cap N).$$ 
\item Given any $R$-submodules $M,N,P$, $$ \text{if $M \subseteq N$ then } (M+P)\cap N = M + (P \cap N).$$ 
\item If submodules $\{M_1, \dots, M_{\ell}, N\}$ have the common basis property, by \autoref{CommonBasisIntersectionSum}, $$(M_1 \cap N) + \dots + (M_{\ell} \cap N) = (M_1 + \dots  +M_{\ell}) \cap N.$$  
\item If a collection of $R$-submodules has the common basis property then all sums of intersections of these modules are split; see  \autoref{CommonBasisClosure}.
\item By assumption, the sets $\{U_1, \dots, U_{\ell}, V_i\}$ satisfy the common basis property for $i=1,2$.
\item By assumption, $V_1 \subseteq V_2,$ in particular $V_1 \cap V_2 = V_1$.
\end{itemize} 

To verify both the inclusion-exclusion property and the split condititon, we first observe that each subset $S \subseteq [k]$ falls into one of four cases. Let $A = S \cap [k-2]$. 
\begin{enumerate}
\item[(i)] $S = A \cup \{k-1, k\}$, i.e.,the intersection $U_S = U_A \cap V_1 \cap V_2 = U_A \cap V_1$. \\  Otherwise said, $U_{A \cup \{k-1, k\}} = U_{A \cup \{k-1\}}$. 

\item[(ii)]  $S=A \cup \{k-1\}$, i.e. $U_S = U_A \cap V_1$. \\
As above, $U_S = U_A \cap V_1 \cap V_2  = U_{S \cup \{k\}}$. 

\item[(iii)]  $S = A \cup \{k\}$, i.e., $U_S = U_A \cap V_2$.

\item[(iv)]  $S=A$, i.e., the intersection $U_S$ has neither factor $V_1$ or $V_2$.
\end{enumerate} 

First we check the inclusion-exclusion property.  
For the set $S = A \cup \{k-1, k\}$, morally, the inclusion-exclusion formula holds because we can delete $U_k=V_2$ from every intersection and appeal to the inclusion-exclusion formula associated to the set $\{U_1, \dots, U_{k-2}, V_1\}$ which by assumption satisfies the common basis property.  In detail, 

 \allowdisplaybreaks
\begin{align*} 
&  \text{ rank$\left(\sum_{i \notin S} U_S \cap U_i \right)$} \\ 
&=  \text{ rank$\left(\sum_{i \in [k-2]\setminus A} U_A \cap V_1 \cap V_2 \cap U_i \right)$} \\
& =  \text{ rank$\left(\sum_{i \in [k-2]\setminus A} U_A \cap V_1  \cap U_i \right)$} \\
& =  \text{ rank$\left(\sum_{i \in [k-1]\setminus (A\sqcup \{k-1\})} U_{A \sqcup \{k-1\} }  \cap U_i \right)$} \\
& = \sum_{ [k-1] \supseteq T \sqcup \{k-1\} \supsetneq A \sqcup \{k-1\} } (-1)^{(|A|+1)-(|T|+1)+1} \; \rank(U_T \cap V_1) \\
& \qquad \qquad \qquad  \text{(by \autoref{CBPCriterionPID}, since $\{U_1, \dots U_{k-2}, V_1\}$ satisfies inclusion-exclusion),} \\ 
& = \sum_{[k-2] \supseteq T\supsetneq A } (-1)^{|A|-|T|+1} \; \rank(U_T \cap V_1) \\
& = \sum_{[k-2] \supseteq T\supsetneq A  } (-1)^{|A|-|T|+1 } \; \rank(U_A \cap V_1 \cap V_2) \\
& = \sum_{[k] \supseteq T \sqcup \{k-1, k\} \supsetneq A \cup \{k-1, k\} } (-1)^{|A|-|T|+1 } \; \rank(U_A \cap V_1 \cap V_2) \\
& = \sum_{[k] \supseteq T' \supsetneq S } (-1)^{|S|-|T'|+1} \; \rank(U_{T'}).
\end{align*}

 For the set $S =A \cup \{k-1\}$, morally, the inclusion-exclusion expression for the corank $F_{\mathcal{U}}(S)$ of $\sum_{i \notin S} (U_S \cap U_i) $ in $U_S$ vanishes since we have cancellation between the terms for $T$ and $T \cup \{k\}$ for each index $T \subseteq [k-1]$. Correspondingly, the corank $F_{\mathcal{U}}(S)$ does indeed vanish since  $U_S \cap U_k = U_S$. In detail, 
 

 \begin{align*}
& \sum_{[k] \supseteq T\supseteq S} (-1)^{|S|-|T|} \; \rank(U_T) \\ 
 = &  \left( \sum_{[k-1] \supseteq T\supseteq (A  \cup \{k-1\})} (-1)^{|A|+1-|T|} \; \rank(U_{T}) \right) + \left( \sum_{[k-1] \supseteq T\supseteq (A  \cup \{k-1\})} (-1)^{|A|+1-|T|-1} \; \rank(U_{T\cup \{k\}}) \right)  \\ 
  = &  \left( \sum_{[k-1] \supseteq T\supseteq (A  \cup \{k-1\})} (-1)^{|A|+1-|T|} \; \rank(U_{T}) \right) + \left( \sum_{[k-1] \supseteq T\supseteq (A  \cup \{k-1\})} (-1)^{|A|+1-|T|-1} \; \rank(U_{T}) \right)  \\ & \qquad \qquad \qquad  \text{ since $U_T = U_{T\cup \{k\} }$, } \\ 
    = &   \sum_{[k-1] \supseteq T\supseteq (A  \cup \{k-1\})} (-1)^{|A|+1-|T|} \; ( \rank(U_{T}) - \rank(U_T) )\\
    =&\, 0 \\
    =& \Big(\text{corank of $\sum_{i \notin S} (U_S \cap U_i) $ in $U_S$}\Big)\\ 
     = & F_{\mathcal{U}}(S). 
 \end{align*}

  For the set $S = A \cup \{k\}$,  
  
   \begin{align*} &  \text{ rank$\left(\sum_{i \notin S} U_S \cap U_i \right)$} \\ 
&=  \text{ rank$\left((U_A   \cap V_2 \cap V_1) +  \sum_{i \in [k-2]\setminus A} U_A  \cap V_2  \cap U_i \right)$} \\
& =  \text{ rank}\left(U_A  \cap V_1 \cap V_2 \right) +  \text{ rank}\left(\sum_{i \in [k-2]\setminus A} U_A  \cap V_2 \cap U_i \right) - \text{ rank} \left( U_A  \cap V_1 \cap V_2 \cap \left(\sum_{i \in [k-2]\setminus A} U_A  \cap V_2 \cap U_i \right)\right)   \\
& \qquad \qquad \qquad  \text{since rank$(C+ B)=$ rank$(C) + $ rank$(B) - $ rank$(C \cap B)$, } \\ 
& =  \text{ rank}\left(U_A  \cap V_1 \cap V_2 \right) +  \text{ rank}\left(\sum_{i \in [k-2]\setminus A} U_A  \cap V_2 \cap U_i \right) - \text{ rank} \left( U_A  \cap V_1\cap V_2  \cap \left(\sum_{i \in [k-2]\setminus A} U_A    \cap U_i \right)\right)   \\
& \qquad \qquad \qquad  \text{because $\sum_{i \in [k-2]\setminus A} U_A  \cap V_2 \cap U_i  =  V_2 \cap \left( \sum_{i \in [k-2]\setminus A} U_A \cap U_i \right)$ by \autoref{CommonBasisIntersectionSum} } \\  & \qquad \qquad \qquad  \text{since $\{U_1, \dots, U_{k-2}, V_2\}$ has the common basis property, } \\ 
& =  \text{ rank}\left(U_A  \cap V_1 \cap V_2 \right) +  \text{ rank}\left(\sum_{i \in [k-2]\setminus A} U_A  \cap V_2 \cap U_i \right) - \text{ rank} \left( U_A  \cap V_1   \cap \left(\sum_{i \in [k-2]\setminus A} U_A    \cap U_i \right)\right)   \\
& =  \text{ rank}\left(U_A  \cap V_1 \cap V_2 \right) +  \text{ rank}\left(\sum_{i \in [k-2]\setminus A} U_A  \cap V_2 \cap U_i \right) - \text{ rank} \left(\sum_{i \in [k-2]\setminus A} U_A    \cap V_1 \cap U_i \right)   \\
& \qquad \qquad \qquad  \text{ by \autoref{CommonBasisIntersectionSum} since $\{U_1, \dots, U_{k-2}, V_1\}$ has the common basis property, } \\ 
& =  \text{ rank}\left(U_A  \cap V_1 \cap V_2 \right)  +  \sum_{[k-2]\cup\{k\} \supseteq T\supsetneq A \cup \{k\}} (-1)^{|A|-|T|} \; \rank(U_T) -  \sum_{[k-1] \supseteq T'\supsetneq A \cup\{k-1\}} (-1)^{|A|-|T'| } \; \rank(U_{T'})\\
& \qquad \qquad \qquad  \text{since \autoref{CBPCriterionPID} applies to the second and third term,} \\ 
& =  \text{ rank}\left(U_A  \cap V_1 \cap V_2 \right)  +  \sum_{[k-2]\cup\{k\} \supseteq T\supsetneq A \cup \{k\}} (-1)^{|A|-|T|} \; \rank(U_T) -  \sum_{[k] \supseteq T\supsetneq A \cup\{k-1, k\}} (-1)^{|A|-|T|+1 } \; \rank(U_T)\\
& \qquad \qquad \qquad  \text{since $U_{T'} = U_{T'\cup \{k\}}$ when $k-1 \in T'$, } \\ 
& =  \sum_{[k] \supseteq T\supsetneq A \cup \{k\}} (-1)^{|A|-|T|} \; \rank(U_T) \\
& =  \sum_{[k] \supseteq T\supsetneq S} (-1)^{|S|-|T|+1} \; \rank(U_T).  
  \end{align*}


 For the set $S=A \subseteq [k-2]$, morally, all intersections involving $V_1$ cancel, and the result follows since  $\{U_1, \dots, U_{k-2}, V_2\}$ satisfies the common basis property. In detail, 
    \begin{align*}
 & \sum_{[k] \supseteq T\supseteq S} (-1)^{|S|-|T|} \; \rank(U_T)  \\ 
  & =   \sum_{[k-2] \supseteq T\supseteq S} (-1)^{|S|-|T|} \; \rank(U_{T})  +  \sum_{[k-2] \supseteq T\supseteq S} (-1)^{|S|-|T|+1} \; \rank(U_{T \cup \{k\}})  \\ & \qquad +  \sum_{[k-2] \supseteq T\supseteq S} (-1)^{|S|-|T|+1} \; \rank(U_{T \cup \{k-1\}})  +  \sum_{[k-2] \supseteq T\supseteq S} (-1)^{|S|-|T|+2} \; \rank(U_{T \cup \{k-1, k\}}) \\ 
    & =   \sum_{[k-2] \supseteq T\supseteq S} (-1)^{|S|-|T|} \; \rank(U_{T})  +  \sum_{[k-2] \supseteq T\supseteq S} (-1)^{|S|-|T|+1} \; \rank(U_{T \cup \{k\}})  \\
    & \qquad \qquad  \text{since $U_{T \cup \{k-1, k\}} = U_{T \cup \{k-1\}}$ for all $T \subseteq [k-2]$}, \\ 
    & = \Big(\text{corank of $U_S \cap V_2 + \sum_{i \in [k-2]\setminus S} (U_S \cap U_i) $ in $U_S$}\Big) \\ & \qquad \qquad \text{since $\{U_1, \dots U_{k-2}, V_2\}$ has the common basis property hence satisfies inclusion-exclusion}, \\ 
        & = \Big(\text{corank of $U_S \cap V_2 + U_S \cap V_1 + \sum_{i \in [k-2]\setminus S} (U_S \cap U_i) $ in $U_S$}\Big)  \\ & \qquad \qquad \text{since $U_S \cap V_1 \subseteq U_S \cap V_2$}, \\
        &= \Big(\text{corank of $\sum_{i \in [k]\setminus S} (U_S \cap U_i) $ in $U_S$}\Big) 
  \end{align*}

%

 Next we check that for all $S \subseteq [k]$ the submodule $\sum_{i \notin S} (U_S \cap U_i)$ is split.  Let $A \subseteq [k-2]$ and consider the four cases. 
Take the case $S=A \cup \{k-1, k\}$. Then $$U_S = U_{A \cup \{k-1,k\} } = U_A \cap V_1 \cap V_2 = U_A \cap V_1,$$  so 
 \begin{align*}
 \sum_{i \notin S} U_S \cap U_i  = \sum_{ i \in [k-2] \setminus A } U_A \cap V_1 \cap U_i.
 \end{align*}
 This sum is split by \autoref{CommonBasisClosure} since $\{U_1, \dots, U_k, V_1\}$ has the common basis property. 
 
 In the case $S=A \cup \{k-1\}$, we see $U_S = U_{A \cup \{k-1\}} = U_A \cap V_1$, so 
 \begin{align*}
 \sum_{i \notin S} U_S \cap U_i  & = \left( \sum_{ i \in [k-2]\setminus A} U_A \cap V_1 \cap U_i \right) + (U_A \cap V_1 \cap V_2) \\  
 & = \left( \sum_{ i \in [k-2]\setminus A} U_A \cap V_1 \cap U_i \right) + (U_A \cap V_1) \\
 & = U_A \cap V_1
 \end{align*}
 which is split since the modules $U_1, \dots, U_k, V_1$ are split (\autoref{LemmaIntersectionSplit}).

  In the case $S=A \cup \{k\}$ we have $U_S = U_{A \cup \{k\}} = U_A \cap V_2$, so 
 \begin{align*}
 \sum_{i \notin S} (U_S \cap U_i)  & = \left( \sum_{ i \in [k-2]\setminus A} U_A \cap V_2 \cap U_i \right) + (U_A \cap V_1 \cap V_2)  \\  
& = \left(\left( \sum_{ i \in [k-2]\setminus A} U_A  \cap U_i \right)\cap V_2\right) + (U_A \cap V_1  \cap V_2) \\ & \qquad \qquad \qquad \qquad \text{ by \autoref{CommonBasisIntersectionSum} since $\{U_1, \dots, U_{k-2}, V_2\}$ has the common basis property}, \\  
& = \left(\left( \sum_{ i \in [k-2]\setminus A} U_A  \cap U_i \right) + (U_A \cap V_1) \right) \cap V_2  \\  & \qquad \qquad \qquad \qquad  \text{ since $U_A \cap V_1 \subseteq V_1 \subseteq V_2$.} 
 \end{align*}
But $\left(\left( \sum_{ i \in [k-2]\setminus A} U_A  \cap U_i \right) + (U_A \cap V_1) \right)$ is split  by \autoref{CommonBasisClosure}  since  $\{U_1, \dots, U_k, V_1\}$ has the common basis property.  Since $V_2$ is split, the intersection is split by \autoref{LemmaIntersectionSplit}.

  In the case $S=A$, 
 \begin{align*}
 \sum_{i \notin S} (U_S \cap U_i)  & = \left( \sum_{ i \in [k-2]\setminus A} U_A \cap U_i \right) + (U_A \cap V_1) + (U_A \cap V_2) \\  
 & = \left( \sum_{ i \in [k-2]\setminus A} U_A  \cap U_i \right)  + (U_A \cap V_2)
 \end{align*}
 since $(U_A \cap V_1) \subseteq (U_A \cap V_2)$. The resultant sum is split by \autoref{CommonBasisClosure} because $\{U_1, \dots, U_k, V_2\}$ has the common basis property. 
\end{proof}

\section{Comparison of higher Tits buildings} \label{Section-TheBuildings}

In this section, we recall the definitions of the classical Tits building and Charney's split Tits building \cite{Charney}.  We then introduce ``higher" Tits buildings generalizing constructions of Rognes \cite{Rog1} and Galatius--Kupers--Randal-Williams \cite{e2cellsIV}. The main result of this section, \autoref{SplitvsNotSplit}, is a comparison theorem between higher buildings with different splitting data.

\subsection{Simplicial complex models of higher buildings}

In this subsection we review the definitions of the (split) Tits buildings and introduce the higher variants. 

\begin{definition}[Tits building] 
Let $M$ be a finite-rank free $R$-module with $R$ a PID. Let $T(M)$ be the realization of the poset of proper nonzero split submodules of $M$ ordered by inclusion.  
\end{definition} 

In this context, we say that proper nonzero split submodules $U,V \subseteq M$ are \emph{comparable} if  $U \subseteq V$ or $V \subseteq U$. 


Charney introduced the following simplicial complex to prove homological stability for general linear groups of Dedekind domains \cite{Charney}.

\begin{definition}[Split Tits building]
Let $M$ be a finite-rank free $R$-module with $R$ a PID. A \emph{splitting of $M$ of size $k$}  is tuple $(V_1,\dots,V_k)$ with $M=V_1 \oplus \dots \oplus V_k$. Let $ST(M)$ be the realization of the poset of splittings of size $2$ with $(P,Q)<(P',Q')$ if $P \subsetneq P'$ and $Q' \subsetneq Q$. We call $ST(M)$ the \emph{split Tits building} associated to $M$. 
\end{definition}

We note that the name ``split Tits building" is intended to emphasize its relationship to the classical Tits building; however, these complexes do not satisfy the combinatorial definition of a building \cite[Definition 4.1]{AbramenkoBrown}. The split Tits buildings appear in Galatius--Kupers--Randal-Williams \cite{e2cellsI, e2cellsIII, e2cellsIV} and Hepworth \cite{Hepworth-Edge}  as instances of  \emph{$E_1$-splitting complexes}  and \emph{splitting posets}.

\begin{proposition} \label{SplittingtVsSplitFlag} 
There is a bijection between the set of $p$-simplices of $ST(M)$ and the set of splittings of $M$ of size $p+2$ given by:
{\footnotesize
\begin{align*} (P_0,Q_0) < \dots <(P_p,Q_p) &\longmapsto (P_0,P_1 \cap Q_0,P_2 \cap Q_1, \dots, P_p \cap Q_{p-1},Q_p ).\\
(M_0, M_1 \oplus \dots \oplus M_{p+1}) 
< \dots <(M_0  \oplus \dots \oplus M_p,M_{p+1}) & \longmapsfrom (M_0, M_1, \dots, M_{p+1} ).
\end{align*}
}
\end{proposition}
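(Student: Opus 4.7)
The plan is to verify that the two maps are well-defined inverse bijections. The main content is in showing that the forward map really produces a splitting of size $p+2$; the other direction and the mutual-inverse check will be essentially formal.

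First I would verify that the backward map is well-defined. Given a splitting $M = M_0 \oplus M_1 \oplus \cdots \oplus M_{p+1}$ with each $M_j$ nonzero, set $P_i := M_0 \oplus \cdots \oplus M_i$ and $Q_i := M_{i+1} \oplus \cdots \oplus M_{p+1}$. Then trivially $M = P_i \oplus Q_i$, each $P_i$ and $Q_i$ is a (nonzero, proper) split summand, and $P_i \subsetneq P_{i+1}$, $Q_{i+1} \subsetneq Q_i$ because $M_{i+1} \neq 0$. So this gives a genuine $p$-simplex of $ST(M)$.

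For the forward map, suppose $(P_0,Q_0) < \cdots < (P_p,Q_p)$ is a chain in the splitting poset. The key observation, and the main step of the argument, is a modular-law calculation for split submodules of a PID: since $P_i \subseteq P_{i+1}$ and $M = P_{i+1} \oplus Q_{i+1}$, intersecting with $Q_i$ (which contains $Q_{i+1}$) gives
\[
Q_i = Q_i \cap (P_{i+1} \oplus Q_{i+1}) = Q_{i+1} \oplus (P_{i+1} \cap Q_i),
\]
and analogously $P_{i+1} = P_i \oplus (P_{i+1} \cap Q_i)$. Substituting these relations inductively into $M = P_i \oplus Q_i$ yields
\[
M = P_0 \oplus (P_1 \cap Q_0) \oplus (P_2 \cap Q_1) \oplus \cdots \oplus (P_p \cap Q_{p-1}) \oplus Q_p,
\]
which is the desired splitting of size $p+2$. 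Each intermediate summand $P_{i+1} \cap Q_i$ is nonzero because $P_i \subsetneq P_{i+1}$ forces an element of $P_{i+1} \setminus P_i$, and its $Q_i$-component under the decomposition $M = P_i \oplus Q_i$ lies in $P_{i+1} \cap Q_i$ and is nonzero. I should check that these modular-law identities hold at the level of split summands of a PID, which follows from \autoref{LemmaIntersectionSplit} (the intersection of a summand with an arbitrary submodule produces a summand of that submodule).

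Finally, I would verify that the two constructions are mutually inverse by direct computation: starting from $(M_0,\dots,M_{p+1})$, the chain produced by the backward map has $P_{i+1} \cap Q_i = \bigl(\bigoplus_{j \le i+1} M_j\bigr) \cap \bigl(\bigoplus_{j \ge i+1} M_j\bigr) = M_{i+1}$, and $P_0 = M_0$, $Q_p = M_{p+1}$, recovering the original splitting. Conversely, starting from a chain, the displayed decomposition shows $P_i = P_0 \oplus (P_1 \cap Q_0) \oplus \cdots \oplus (P_i \cap Q_{i-1})$ and $Q_i = (P_{i+1} \cap Q_i) \oplus \cdots \oplus Q_p$, so the chain reconstructed from the splitting agrees with the original. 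The main obstacle is the modular-law decomposition in the middle paragraph; everything else is bookkeeping.
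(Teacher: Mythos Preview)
Your argument is correct and is essentially the standard direct verification. The paper does not actually prove this proposition; it simply refers the reader to Hepworth \cite[Proposition 3.4]{Hepworth-Edge}. So you have supplied more than the paper does.

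One small clarification: the modular-law step
\[
Q_i \cap (P_{i+1} \oplus Q_{i+1}) = Q_{i+1} \oplus (P_{i+1} \cap Q_i)
\]
holds in the lattice of submodules of any module (module lattices are modular), so you do not need \autoref{LemmaIntersectionSplit} or any hypothesis about PIDs or split summands here. The role of the PID assumption is only to ensure that the summands appearing are themselves free and split, so that ``splitting of size $p+2$'' makes sense in the intended way; the algebraic identities themselves are general. With that caveat, every step you outline goes through as written.
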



See (for example) Hepworth \cite[Proposition 3.4]{Hepworth-Edge}. We now recall the definition of Rognes' common basis complex \cite{Rog1}.

\begin{definition}[Common basis complex] Let $R$ be a PID. 
 The \emph{common basis complex} ${\CB}_n(R)$ is the simplicial complex whose vertices $V$ are proper nonzero summands of $R^n$, and where a set of vertices $\sigma=\{V_0,\dots, V_p\}$ spans a simplex if and only if it has the common basis property in the sense of \autoref{DefnCompatible}. 
\end{definition}



\begin{definition}[Higher Tits buildings] \label{DefnTab}
Let $M$ be a finite-rank free $R$-module with $R$ a PID.  Let $\sigma=\{W_0,\dots,W_k \}$ be a collection of  submodules of $M$ with the common basis property. We define $T^{a,b}(M,\sigma)$, the higher Tits building relative $\sigma$, as a subcomplex of the join $$\underbrace{T(M) * \dots * T(M)}_{a\ \text{times}} * \underbrace{ST(M) * \dots * ST(M)}_{b\ \text{times}}.$$ 
Let $ (V_{0,i} < \dots < V_{n_{i},i}  ) $ denote a flag of proper nonzero submodules, representing a simplex in the $i$th factor $T(M)$. Let $(M_{0,i},\dots,M_{m_{i},i})$ denote a splitting of $R^n$ of size $(m_i+1)$ into proper nonzero submodules, representing a simplex in the $i$th factor $ST(M)$. Then a simplex  
$$ (V_{0,1} < \dots < V_{n_1,1}  ) * \dots * (V_{0,a} < \dots < V_{n_a,a}  ) * (M_{0,1},\dots,M_{m_1,1}  ) * \dots * (M_{0,b},\dots,M_{m_b,b}  ) $$ 
of the join is contained in $T^{a,b}(M,\sigma)$ precisely when $$\{V_{0,1}, \dots, V_{n_a,a}, M_{0,1}, \dots,   M_{m_{b,b}} , W_0, \dots, W_k  \}$$ has the common basis property. For $\sigma$ empty, denote $T^{a,b}(M,\sigma)$ by $T^{a,b}(M)$. We write $T(M, \sigma)$ for $T^{1,0}(M, \sigma)$ and $ST(M, \sigma)$ for $T^{0,1}(M, \sigma)$. 
\end{definition}
Notably, in the definition, the submodules  appearing in different join factors can coincide with each other or with the submodules $W_i$. 

\begin{example} Let $M$ be a finite-rank free $R$-module. A \emph{frame} for $M$ is a decomposition $M= L_1\oplus L_2 \oplus \dots \oplus L_n$ of $M$ into a direct sum of lines. If $\sigma$ is a fixed frame $\{L_1, \dots, L_n\}$, then $T(M, \sigma) \cong S^{n-2}$  is the full subcomplex of $T(M)$ on vertices spanned by proper nonempty subsets of $\sigma$. This is called the \emph{apartment} associated to the frame $\sigma$. 
\end{example} 


In the following example, we will see a structural difference in the relative buildings  $T(M, \sigma)$ when $R$ is a field or a non-field PID.  This distinction is essentially the reason we only prove our main theorem, \autoref{ConnectivityThm}, in the case that $R$ is a field. 

\begin{example} \label{FieldVSPID} Let $R=F$ be a field, and let $M$ be a finite-dimensional $F$-vector space. Let $\sigma$ be a flag in $M$. Then $T(M, \sigma) = T(M)$.  Since $T(M)$ is a \emph{building}, by definition any two simplices are contained in a common \emph{apartment}, i.e., any two flags in $M$ admit a common basis for $M$. 
See for example Abramenko--Brown \cite[Section 4]{AbramenkoBrown} for precise definitions and \cite[Section 4.3]{AbramenkoBrown} for a proof. 

This is not true for more general rings $R$. For example, let $M=R^2 = \langle e_1, e_2\rangle.$  If  $\sigma = \{ R e_1 \}$ then $T(R^2, \sigma)$ has vertices all spans of primitive vectors with second coordinate a unit or zero. If $R$ is not a field, this is strictly smaller than the vertex set of $T(R^2)$. 
\end{example}



The following lemma is a consequence of \autoref{BigFlagLemma}. 

\begin{lemma} \label{RelativeTitsFull} Let $M$ be a finite-rank free $R$-module, and $\sigma$ a set of proper nonzero submodules with the common basis property. Then $T(M, \sigma)$ is a full subcomplex of $T(M)$. 
\end{lemma}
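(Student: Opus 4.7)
The plan is to unpack the definitions and apply \autoref{BigFlagLemma} directly.

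First I would recall what needs to be checked: fullness means that if $\{V_0,\ldots,V_p\}$ is any set of vertices of $T(M,\sigma)$ that spans a simplex of the ambient complex $T(M)$, then it already spans a simplex of $T(M,\sigma)$. So I would start with such a collection. Each $V_i$ is a vertex of $T(M,\sigma)$, which by \autoref{DefnTab} (with $a=1$, $b=0$) means $V_i$ is a proper nonzero submodule of $M$ and the set $\sigma \cup \{V_i\}$ has the common basis property.

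Next I would use the fact that the $V_i$ span a simplex of $T(M)$, which by definition of $T(M)$ (the realization of the poset of proper nonzero split submodules) means that, after relabelling, they form a flag $V_0 \subseteq V_1 \subseteq \cdots \subseteq V_p$. This is precisely the hypothesis of \autoref{BigFlagLemma}, taking $\{U_1,\ldots,U_\ell\}=\sigma$ and the flag $V_1\subseteq\cdots\subseteq V_r$ to be $V_0\subseteq\cdots\subseteq V_p$. Applying that lemma yields that $\sigma \cup \{V_0,\ldots,V_p\}$ has the common basis property. By \autoref{DefnTab}, this is exactly the condition for $\{V_0,\ldots,V_p\}$ to span a simplex of $T(M,\sigma)$, completing the proof.

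There is essentially no obstacle once \autoref{BigFlagLemma} is in hand; the content of this lemma is a packaging of that algebraic result into the combinatorial statement about subcomplexes. The only small point to be careful about is the direction of the argument: we are given pairwise compatibility of $\sigma$ with each flag element (not all at once), and it is \autoref{BigFlagLemma} that upgrades this to joint compatibility with the full flag.
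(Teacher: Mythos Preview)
Your proposal is correct and matches the paper's approach exactly: the paper simply states that the lemma is a consequence of \autoref{BigFlagLemma}, and your argument spells out precisely how that deduction goes.
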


\subsection{Combinatorial Morse Theory}
To analyze these complexes we will employ variations on a technique sometimes called combinatorial Morse theory. See Bestvina \cite{Bestvina-MorseTheory} for exposition on this tool. This method has been used (for example) by Charney \cite{Charney} in her analysis of split Tits buildings.

\begin{theorem}{\bf (Combinatorial Morse Theory).}  \label{Morse} Let $X$ be a simplicial complex and let $Y$ be a full subcomplex of $X$. Let $S$ be the set of  vertices of $X$ that are not in $Y$. Suppose there is no edge between any pair of vertices $s,t  \in S$. Then $$X/Y \simeq  \bigvee_{s \in S} \Sigma \big( \Link_X(s) \big).$$ In particular, if $Y$ is contractible and $  \Lk_X(s) $ is contractible for every $s \in S$, then $X$ is contractible. 

\end{theorem}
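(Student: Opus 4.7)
The plan is to exploit the combinatorial observation that, under the two hypotheses, every simplex of $X$ contains at most one vertex from $S$. Indeed, a simplex all of whose vertices lie in $Y$ is itself a simplex of $Y$ by fullness, while a simplex containing two vertices of $S$ would contain the edge between them, contradicting the second assumption. From this I would deduce two structural facts: for each $s \in S$ every vertex of $\Link_X(s)$ lies in $Y$ (no link vertex can be in $S$), and so by fullness $\Link_X(s) \subseteq Y$; and for distinct $s,t \in S$, any simplex in $\Star_X(s) \cap \Star_X(t)$ contains neither $s$ nor $t$ (otherwise both would appear, forcing the forbidden edge), so this intersection lies in $Y$. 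Thus $X$ is realized as the pushout obtained by gluing the disjoint cones $\bigsqcup_{s \in S} \Star_X(s) = \bigsqcup_{s \in S} s * \Link_X(s)$ onto $Y$ along the disjoint union $\bigsqcup_{s \in S} \Link_X(s)$.

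Next I would pass to the quotient $X/Y$. Collapsing $Y$ to a point simultaneously collapses every $\Link_X(s) \subseteq Y$, and each cone $\Star_X(s) = s * \Link_X(s)$ becomes $\Star_X(s)/\Link_X(s)$, which is the unreduced suspension $\Sigma\Link_X(s)$, with its two suspension points being $s$ and the image of $Y$. Since the stars only meet inside $Y$, in the quotient these suspensions meet only at the image of $Y$, yielding
$$ X/Y \;\cong\; \bigvee_{s \in S} \Sigma \big(\Link_X(s)\big), $$
based at the image of $Y$.

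For the final assertion, the inclusion $Y \hookrightarrow X$ is a cofibration (it is a simplicial pair), so if $Y$ is contractible then the collapse map is a homotopy equivalence $X \simeq X/Y$. Contractibility of every $\Link_X(s)$ makes every $\Sigma\Link_X(s)$ contractible, hence the wedge is contractible, and so is $X$. The main thing to be careful about is the pushout/quotient identification in the middle step; the rest is entirely combinatorial bookkeeping from the two hypotheses, and the fullness assumption is used precisely to promote ``no vertices in $S$'' to ``contained in $Y$''.
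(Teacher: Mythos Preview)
Your proof is correct and matches the paper's own (very brief) explanation: the paper only remarks that $X$ is obtained from $Y$ by, for each $s\in S$, coning off the subcomplex $\Link_X(s)=\Link_X(s)\cap Y$ with cone point $s$, which is exactly the pushout description you spell out. Your argument fills in the details of that sketch faithfully and correctly.
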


\autoref{Morse} allows us to construct the complex $X$ out of the complex $Y$ and the set $S$ of added vertices. For each vertex $s \in S$ we cone off the subcomplex $\Link_X(s) = \Link_X(s)  \cap Y$ in $Y$ and identify $s$ with the cone point. 

We now use \autoref{Morse} to prove \autoref{Morse2}, a more general version of combinatorial Morse theory that allows us to add higher-dimensional simplices as well as vertices.  Concretely, we `add' a set $S$ of new simplices $\sigma$ to a complex $Y$ in the sense that $Y$ is embedded into a complex $X = Y \cup \bigcup_{\sigma \in S} \Star_X(\sigma)$  satisfying $\Star_X(\sigma) \cap Y = \partial \sigma * \Lk_X(\sigma)$ for each $\sigma \in S$. 

\autoref{Morse2} is closely related to the `link arguments' (also called `badness arguments') standard in the field of homological stability; see for example Hatcher--Vogtmann \cite[Proposition 2.1 and Corollary 2.2]{HatcherVogtmann-Tethers}. 

\begin{theorem}{\bf (Generalized Combinatorial Morse Theory).}  \label{Morse2} Let $X$ be a simplicial complex and let $Y$ be a  subcomplex of $X$. Let $S$ be a set of  simplices of $X$ satisfying the following conditions:
\begin{enumerate}[(i)]
\item \label{Item-Morse2i} Suppose $\sigma$ is a simplex in $X$. Then $\sigma$ is a simplex in $Y$ if and only if no face of  $\sigma$ is in $S$.
\item \label{Item-Morse2ii} If $\tau$ and $\sigma$ are distinct simplices in $S$, then the union of their vertices does not form a simplex in $X$. 
\end{enumerate}
Then $$X/Y \simeq  \bigvee_{\sigma \in S} \Sigma^{\dim(\sigma)+1} \big( \Link_X(\sigma) \big).$$ 
\end{theorem}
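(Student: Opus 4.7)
The plan is to reduce the statement to the classical computation of $(\sigma * K)/(\partial \sigma * K)$, after using condition (\ref{Item-Morse2ii}) to ensure that the closed stars of the simplices in $S$ interact only through $Y$. The first step is to extract the following combinatorial consequence of (\ref{Item-Morse2ii}): if $\rho$ is a simplex of $X$, then $\rho$ has \emph{at most one} face lying in $S$. Indeed, if $\sigma,\sigma'\in S$ were distinct faces of $\rho$, then $\sigma\cup\sigma'\subseteq\rho$ would be a simplex of $X$, contradicting (\ref{Item-Morse2ii}). Combined with (\ref{Item-Morse2i}), this shows that every simplex of $X\setminus Y$ contains a unique $\sigma\in S$ as a face, so
\[
X \;=\; Y\;\cup\;\bigcup_{\sigma\in S}\overline{\Star}_X(\sigma),
\]
where $\overline{\Star}_X(\sigma)=\sigma * \Link_X(\sigma)$ denotes the closed star.

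The second step is to identify $\overline{\Star}_X(\sigma)\cap Y$ and to show that the closed stars are disjoint away from $Y$. A simplex $\tau\in\overline{\Star}_X(\sigma)$ lies in $Y$ precisely when $\sigma\not\subseteq\tau$ (since otherwise $\sigma$ itself is a face of $\tau$ in $S$, and no other simplex of $S$ can be a face of such a $\tau$ by the observation above applied inside $\sigma\cup\tau$). Hence
\[
\overline{\Star}_X(\sigma)\cap Y \;=\; \partial\sigma * \Link_X(\sigma).
\]
Moreover, if $\sigma\neq\sigma'$ are both in $S$ and $\tau\in \overline{\Star}_X(\sigma)\cap\overline{\Star}_X(\sigma')$, then $\sigma\cup\tau$ and $\sigma'\cup\tau$ are simplices; the uniqueness observation then forces $\tau\in Y$. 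Therefore the quotient $X/Y$ decomposes as a wedge
\[
X/Y \;\simeq\; \bigvee_{\sigma\in S}\; \overline{\Star}_X(\sigma)\big/\bigl(\partial\sigma * \Link_X(\sigma)\bigr).
\]

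The third step is the homotopical computation of each wedge summand. Setting $n=\dim(\sigma)$ and $K=\Link_X(\sigma)$, one has $\sigma=v*\partial\sigma$ for any vertex $v$ of $\sigma$, so
\[
\sigma * K \;=\; v * (\partial\sigma * K) \;=\; C(\partial\sigma * K),
\]
i.e.\ $\sigma * K$ is the cone on $\partial\sigma * K$. Since $\partial\sigma\cong S^{n-1}$ and $S^{n-1}*K\simeq \Sigma^{n} K$, the cofibration $\partial\sigma * K\hookrightarrow \sigma * K$ gives
\[
\bigl(\sigma * K\bigr)\big/\bigl(\partial\sigma * K\bigr)\;\simeq\; \Sigma\bigl(\partial\sigma * K\bigr)\;\simeq\;\Sigma^{n+1} K \;=\; \Sigma^{\dim(\sigma)+1}\Link_X(\sigma),
\]
and assembling these yields the desired wedge formula.

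I expect no serious obstacle; the only subtlety is making sure that (\ref{Item-Morse2ii}) is strong enough both to force the uniqueness of the face in $S$ contained in a given simplex of $X$ and to force the closed stars of distinct $\sigma\in S$ to meet inside $Y$. Both follow from the same simple observation above, so the argument goes through verbatim for simplices of arbitrary dimension and recovers \autoref{Morse} as the special case where every $\sigma\in S$ is a vertex.
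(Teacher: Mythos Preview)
Your proof is correct. The key observations---that every simplex of $X$ has at most one face in $S$, that $\overline{\Star}_X(\sigma)\cap Y=\partial\sigma*\Link_X(\sigma)$, and that distinct closed stars meet only inside $Y$---are all accurate consequences of (\ref{Item-Morse2i}) and (\ref{Item-Morse2ii}), and the final join computation $(\sigma*K)/(\partial\sigma*K)\simeq\Sigma^{\dim\sigma+1}K$ is standard. One cosmetic point: writing ``$\sigma=v*\partial\sigma$'' is a little imprecise simplicially (since $v$ is a vertex of $\partial\sigma$), but your intended statement---that $|\sigma*K|$ is the cone on $|\partial\sigma*K|$ with cone point any vertex of $\sigma$---is correct and is what the argument actually uses.

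Your route is genuinely different from the paper's. The paper reduces \autoref{Morse2} to the vertex case \autoref{Morse}: it builds a subdivision $X'$ of $X$ by inserting a barycenter $v(s)$ in each positive-dimensional $s\in S$, checks that $Y$ remains a full subcomplex of $X'$ and that the new vertices $\{v(s)\}$ are pairwise non-adjacent, applies \autoref{Morse}, and then identifies $\Link_{X'}(v(s))\cong\partial s*\Link_X(s)$. You instead work directly in $X$, decomposing $X/Y$ as a wedge of the quotients $\overline{\Star}_X(\sigma)/(\partial\sigma*\Link_X(\sigma))$ and computing each one by join identities. Your approach is shorter and avoids the careful bookkeeping needed to verify that the subdivided complex is well-defined and that $Y$ is full in it; the paper's approach has the virtue of making the reduction to \autoref{Morse} explicit and thereby packaging the result as a formal consequence of the vertex case. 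Both arguments terminate in the same identity $\partial\sigma*\Link_X(\sigma)\simeq\Sigma^{\dim\sigma}\Link_X(\sigma)$.
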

Condition (\ref{Item-Morse2ii}) implies that any simplex of $X$ can have at most one face in $S$. By Condition (\ref{Item-Morse2i}), the simplices with no face in $S$ are precisely the simplices in $Y$. The subcomplex $Y$ is determined by the set $S$.  

\begin{proof}[Proof of \autoref{Morse2}] We will reduce this statement to \autoref{Morse}. We will construct a new simplicial complex $X'$ homeomorphic to $X$ by subdividing $X$. Heuristically, we will introduce a new vertex $v(s)$ at the barycenter of each positive-dimensional simplex $s \in S$, and take the minimal necessary subdivision. Condition (\ref{Item-Morse2ii}) will ensure a canonical minimal subdivision exists.  An example of complexes $X$ and $X'$ is shown in \autoref{MorseExample1}. 

\begin{figure}[h!]
\includegraphics[scale=3]{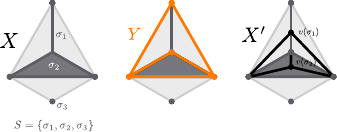} 
\caption{In this example, the set $S$ consists of the edge $\sigma_1$, the 2-simplex $\sigma_2$, and the vertex $\sigma_3$ shown in dark gray in the first figure. The subcomplex $Y \subseteq X$ is highlighted in orange in the second figure. The subdivided complex $X'$ is shown in the third, with the newly added simplices shown in black.} 
\label{MorseExample1}
\end{figure}

Let $\V(X)$ be the set of vertices of $X$. The vertices of $X'$ will be the disjoint union of $\V(X)$ and $S' = \{ v(s)  \mid  s \in S \text{ with } \dim(s)\geq 1\}$.  The following is a complete list of simplices of $X'$: 
\begin{enumerate}[(a)]
\item \label{SimplicesType(a)} A set of vertices $x_0, \dots, x_p \in \V(X)$ spans a simplex in $X'$ if and only if they span a simplex in $X$ that has no positive-dimensional face contained in $S$.
\item \label{SimplicesType(b)} Let $s =[s_0, \dots, s_q]$ be a positive-dimensional simplex in $S$. Then for  $x_0, \dots, x_p \in \V(X)$, the vertices $x_0, \dots, x_p, v(s)$ span a simplex in $X'$ if and only if the vertices in the (not necessarily disjoint) union  $\{s_0, \dots, s_q\} \cup \{x_0, \dots, x_p \}$ form a simplex in $X$ but $\{s_0, \dots, s_q\} \not\subseteq \{x_0, \dots, x_p \}$. 
\end{enumerate} 
We briefly verify that our construction of $X'$ is well-defined, by checking that all faces of simplices in $X'$ are themselves simplices. This is straightforward except for a simplex $[x_0, \dots, x_p, v(s)]$ of type (\ref{SimplicesType(b)}) and a face $\alpha=[x_{i_0}, \dots, x_{i_t}]$ not containing the vertex $v(s)$. By assumption $\alpha$ is a face of the simplex spanned by the vertices $\{s_0, \dots, s_q\} \cup \{x_0, \dots, x_p \}$ but $\alpha$ does not contain $s \in S$. By Condition (\ref{Item-Morse2ii}) $\alpha$ cannot have a face in $S$. Thus $\alpha$ is a simplex of type (\ref{SimplicesType(a)}). 

To argue that $X'$ is homeomorphic to $X$, we interpret $X'$ as a subdivision of $X$. Simplices with no positive-dimensional face in $S$ are not divided; these are simplices of type (\ref{SimplicesType(a)}).  So we consider simplices with faces in $S$. 

 Let $s =[s_0, \dots, s_q]$ be a positive-dimensional simplex of $S$. Let $\sigma$  be a simplex spanned by the {\it disjoint} union of the vertices $\{s_0, \dots, s_q\} \sqcup \{x_{q+1}, \dots, x_r \}$. By Condition (\ref{Item-Morse2ii}), $s$ is the only face of $\sigma$ in $S$.  Item (\ref{SimplicesType(b)})  states that, to build $X'$ from $X$, we replace $\sigma$ with a homeomorphic complex obtained by adding a vertex $v(s)$ to the barycenter of the face $s$ and subdividing $\sigma$ into a union of $(q+1)$ simplices. Specifically, we replace  the simplex $\sigma$ by the union of the simplices spanned by the vertices $x_{q+1}, \dots, x_r, v(s)$ and each proper subset of $\{s_0, \dots s_q\}$.

Observe that (by Condition (\ref{Item-Morse2i})) simplices in $Y \subseteq X$ are not subdivided in $X'$. Thus we may view $Y$ as a subcomplex of $X'$ in a manner compatible with the homeomorphism $X \cong X'$. 
Let $T = S' \cup \{s  \mid   s \in S \text{ with $\dim(s)=0$}\}$ be the union of the new vertices $v(s)$ in $X'$ and the elements of $S$ that are vertices.  Condition (\ref{Item-Morse2i}) states that a vertex of $X$ is in $Y$ if and only if it is not an element of $S$. Hence $T$ is precisely the set of vertices of $X'$ not contained in $Y$. 

The set $T$ is in canonical bijection with $S$. For uniformity we write $v(s)$ for the vertex $s$ when $s$ is an element of $S$, so $T=\{v(s) \mid   s \in S\}$.

We now verify that  there is no edge in $X'$ between any pair of vertices in $T$. From our description of the simplices of $X'$, there are no edges in $X'$ between ``new" vertices $v(s)$ in $S'$. Given a $0$-dimensional simplex $s \in S$, Condition (\ref{Item-Morse2ii})  states that $s$ is not contained in any simplex of $X$ with a face $\sigma$ in $S$ distinct from $s$. This condition implies there are no edges between distinct $0$-simplices of $S$ in $X$, and in our construction of $X'$ no new edges are added between vertices of $X$. Condition (\ref{Item-Morse2ii})  implies moreover, by our description of simplices of type (\ref{SimplicesType(b)}),  that there is no edge between $s$ and $v(\sigma)$ for any positive-dimensional simplex $\sigma \in S$. We deduce that there are no edges between elements of $T$. 

Next we verify that $Y$ is a full subcomplex of $X'$ (specifically, the full subcomplex on vertices not in $T$). Let $x_0, \dots, x_p$ be a set of vertices in $Y$ that span a simplex $\sigma$ in $X'$. Since the vertices $x_i$ are in $Y$ they must 
\begin{itemize}
\item be vertices of $X$, and
\item not include any 0-simplex in $S$. 
\end{itemize} 
The first item implies that $\sigma$ is a simplex of type (\ref{SimplicesType(a)}). Condition (\ref{Item-Morse2i})  and the second item then imply that $\sigma$  is a simplex of $Y$.


We can therefore apply \autoref{Morse} to the complex $X'$, its full subcomplex $Y$, and the vertex set $T$. This implies
$$X/Y \cong X'/Y \simeq  \bigvee_{s \in S} \Sigma\big( \Link_{X'}( v(s))  \big).$$ 
But
$$ \Link_{X'}( v(s))  \cong ( \Link_{X}(s) ) * \partial s \cong  \Sigma^{\dim(\sigma)} \big( \Link_X(\sigma)\big).  $$
The result follows.
\end{proof}

\subsection{Contractible subcomplexes}

In this subsection, we prove that certain subcomplexes of higher Tits buildings are contractible. In the next subsection we will use this result in our proof that the homology of $T^{a,b}(M)$ (\autoref{DefnTab}) depends only on the sum $a+b$ whenever $a \geq 1$. 

\begin{lemma} \label{sigmaClosure}
Let $M$ be a finite-rank free $R$-module with $R$ a PID.  Let $\sigma=\{W_0,\dots,W_k \}$ be a collection of split submodules of $M$. Let $\bar \sigma$ be the closure of $\sigma$ under intersections and sum. Then $T^{a,b}(M,\bar \sigma)= T^{a,b}(M, \sigma)$.
\end{lemma}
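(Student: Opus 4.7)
The plan is to establish mutual containment of the two simplicial complexes. The inclusion $T^{a,b}(M,\bar\sigma) \subseteq T^{a,b}(M,\sigma)$ is essentially tautological: since $\sigma \subseteq \bar\sigma$, any collection of submodules which has the common basis property together with $\bar\sigma$ has it together with $\sigma$, so the defining condition for a simplex to lie in $T^{a,b}(M,\bar\sigma)$ is strictly stronger than the one for $T^{a,b}(M,\sigma)$.

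For the reverse inclusion, the key tool is \autoref{CommonBasisClosure}. Given a simplex $\rho$ of $T^{a,b}(M,\sigma)$, let $\tau$ denote the finite set of all submodules of $M$ appearing among the flags (from the $T(M)$ factors) and the splittings (from the $ST(M)$ factors) that specify $\rho$. By the definition of $T^{a,b}(M,\sigma)$, the union $\tau \cup \sigma$ has the common basis property. Applying \autoref{CommonBasisClosure}(\ref{i}), the closure $\overline{\tau \cup \sigma}$ under iterated sums and intersections also has the common basis property, and hence so does every subcollection. Since $\sigma \subseteq \tau \cup \sigma$ and the closure operation is monotone, we have $\bar\sigma \subseteq \overline{\tau \cup \sigma}$, so $\tau \cup \bar\sigma \subseteq \overline{\tau \cup \sigma}$ and therefore $\tau \cup \bar\sigma$ has the common basis property. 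This shows $\rho$ is a simplex of $T^{a,b}(M,\bar\sigma)$.

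There is no real obstacle here; the lemma is essentially a direct bookkeeping consequence of the fact already recorded in \autoref{CommonBasisClosure} that common-basis collections are closed under passage to sums and intersections. One minor point to verify implicitly is that $\bar\sigma$ is itself a well-defined set of split submodules with the common basis property, which follows from \autoref{CommonBasisClosure}(\ref{i}) applied to $\sigma$ alone, so that $T^{a,b}(M,\bar\sigma)$ is a bona fide relative higher Tits building in the sense of \autoref{DefnTab}.
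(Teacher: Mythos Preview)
Your proof is correct and follows essentially the same approach as the paper: both argue that the equality is a direct consequence of \autoref{CommonBasisClosure}. The paper's proof is the single line ``This lemma is a consequence of \autoref{CommonBasisClosure},'' and you have simply unpacked that reference into the explicit mutual-containment argument.
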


\begin{proof} This lemma is a consequence of \autoref{CommonBasisClosure}. 
\end{proof}

The map 
\begin{align*}
ST(M) & \longrightarrow T(M) \\ 
(P,Q) & \longmapsto P
\end{align*}
 that forgets complements induces maps $$T^{a,b}(M,\sigma) \m T^{a',b'}(M,\sigma)$$ whenever $a+b=a'+b'$ and $a' \geq a$. The goal of this section is to prove this map is a homology equivalence whenever $a \geq 1$. The following proposition is the main technical result that we use to prove this comparison.

\begin{proposition} \label{ContractibleTits}
Let $M$ be a finite-rank free $R$-module with $R$ a PID.  Let $\tau=\{V_0, \dots,V_p\}$ be a nonempty simplex of $T(M)$ with $V_0 \subsetneq V_1 \subsetneq \dots \subsetneq V_p$. Let $\sigma$ be a simplex of ${\CB}(M)$ containing $\tau$. Let $\tau_1,\dots,\tau_r$ be distinct simplices of $ST(M,\sigma)$ in the preimage of $\tau$ under the map $ST(M,\sigma) \m T(M,\sigma)$.  Let $\sigma_i$ be the simplex in $\CB(M)$ obtained by taking the union of $\sigma$ and the sets of submodules corresponding to vertices of $\tau_i$. If $r \geq 2$ and $\rank (M) \geq 2$, then $$T(M,\sigma_1) \cap \dots \cap T(M,\sigma_r) $$ is contractible.
\end{proposition}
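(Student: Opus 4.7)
Write $Y := T(M,\sigma_1) \cap \cdots \cap T(M,\sigma_r)$. The plan is to show that $Y$ deformation retracts onto the closed star of $V_p$ in $Y$, which is the cone $\{V_p\} * \Lk_Y(V_p)$ and hence contractible. Note that $V_p$ is a vertex of $Y$: since $V_p \in \sigma \subseteq \sigma_i$ for each $i$, the module $V_p$ is compatible with every $\sigma_i$.

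The essential tool is \autoref{CommonBasisClosure}: for any vertex $U$ of $Y$, the submodules $U \cap V_p$ and $U + V_p$ are compatible with every $\sigma_i$ (since $V_p \in \sigma_i$), and hence are vertices of $Y$ whenever they are proper and nonzero summands of $M$. When this holds, the flags $U \cap V_p \subsetneq U$, $U \cap V_p \subsetneq V_p$, $U \subsetneq U + V_p$, and $V_p \subsetneq U + V_p$ give edges of $Y$, providing a path of length at most two from $U$ into the closed star of $V_p$. I would organize this as a combinatorial Morse argument (\autoref{Morse2}) that successively removes vertices not comparable with $V_p$ by coning them to $U \cap V_p$ or $U + V_p$. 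The link $\Lk_Y(U)$ admits a join decomposition into vertices properly contained in $U$ and vertices properly containing $U$, with joint compatibility of the two factors ensured by \autoref{BigFlagLemma}; contractibility of the link then reduces to contractibility of one factor, supplied by the canonical cone point $U \cap V_p$ or $U + V_p$.

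The hypothesis $r \geq 2$ becomes essential precisely in the case where $U$ is a complement of $V_p$ in $M$, i.e., $U \cap V_p = 0$ and $U + V_p = M$, so that neither canonical cone point exists. Applying \autoref{SplittingtVsSplitFlag} to each $\tau_i$ yields a splitting $M = M_{0,i} \oplus \cdots \oplus M_{p+1,i}$ with $V_j = M_{0,i} \oplus \cdots \oplus M_{j,i}$. The compatibility $U = \bigoplus_j (U \cap M_{j,i})$, together with $U \cap V_p = 0$ and $\rank(U) = \rank(M_{p+1,i})$, forces $U = M_{p+1,i}$ for each $i$. Hence all the $\tau_i$ share this common top complement; since the $\tau_i$ are distinct simplices, they must then differ in the lower splittings of $V_p$, and the problem reduces to the analogous intersection statement inside $V_p$. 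I would conclude this case by induction on $\rank(M)$, with the base case $\rank(M) = 2$ verified directly (every proper nonzero submodule is a line, and the only line compatible with two distinct splittings $(V_0, Q_1)$ and $(V_0, Q_2)$ is $V_0$ itself).

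The principal obstacle is the combinatorial Morse bookkeeping: distinct non-star vertices of $Y$ can be comparable (for example, a line transverse to $V_p$ can sit inside a plane transverse to $V_p$), so \autoref{Morse} does not directly apply, and one must invoke \autoref{Morse2} or an iterated version with a carefully chosen ordering of the vertices being removed. A secondary obstacle is verifying that the restricted data $\sigma|_{V_p}$ and $\tau_i|_{V_p}$ satisfy the hypotheses of the proposition at smaller rank, so that the inductive step for complements of $V_p$ goes through cleanly.
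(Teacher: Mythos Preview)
Your outline is essentially the \emph{order-dual} of the paper's proof: the paper cones onto $V_0$ rather than $V_p$, but the two arguments are mirror images. Your treatment of the key inductive step is correct and matches the paper's: a complement $U$ of $V_p$ (respectively, of $V_0$ in the paper) must coincide with $C_{p,i}$ for every $i$, so the distinctness of the $\tau_i$ survives after passing to the smaller module $V_p$ (respectively $W=C_{0,i}$), and induction on $\rank(M)$ applies. Your base case $\rank(M)=2$ is also right.

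There is, however, a genuine gap in the intermediate Morse steps. You propose to remove each vertex $W$ not comparable to $V_p$ by using $W\cap V_p$ or $W+V_p$ as a cone point of one join factor of the link. This does work for one of the two stages: if you filter by rank and first add the vertices with $W\cap V_p\neq 0$ in order of decreasing rank, then any $U\subsetneq W$ already present lies in $\Star(V_p)$ and hence below $W\cap V_p$. But in the remaining stage, where $W\cap V_p=0$ but $W$ is not yet a complement of $V_p$, the cone point $W+V_p$ does \emph{not} dominate every $U\supsetneq W$ already added: such a $U$ can come from the first stage (so $U\cap V_p\neq 0$ yet $V_p\not\subseteq U$), and then $W+V_p\not\subseteq U$. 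No reordering by rank fixes this; both orderings fail for the same reason.

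The paper resolves the dual of this obstruction not with a cone point but with a monotone poset self-map: for $W$ with $W+V_0=M$ and $W\cap V_0\neq 0$, it applies $f\colon U\mapsto U+(W\cap V_0)$ to $\Lk^{<}_X(W)$, checks via \autoref{mapfdefined} that $f$ lands in $\Lk^{<}_X(W)$, and observes that the image is coned by $W\cap V_0$. In your dual setup the corresponding map is $U\mapsto U\cap(W+V_p)$ on $\Lk^{>}_Y(W)$. This extra device is the missing ingredient; once you insert it, your argument goes through. Incidentally, \autoref{Morse2} is not needed---the paper gets by with \autoref{Morse} alone, since refining the filtration by rank makes each stratum an antichain.
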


Let us parse the statement of the proposition. We fix a flag $\tau=\{V_0, \dots,V_p\}$ in $M$ and a set $\sigma = \{V_0, \dots, V_p, Z_0, \dots, Z_k\}$ with the common basis property.  Then $$\tau_i = \{(V_0, C_{0,i}), (V_1, C_{1,i}), \dots, (V_p, C_{p,i})\}$$ and $$\sigma_i = \{C_{0,i} , \dots, C_{p,i}\} \cup \sigma = \{ V_0, C_{0,i} , \dots, V_p,  C_{p,i},Z_0, \dots, Z_k\}  $$ are defined by the conditions:
\begin{itemize}
\item $V_{\ell} \oplus C_{{\ell,i}} = M$ for all $\ell$,
\item $M \supsetneq C_{0,i} \supsetneq C_{1,i} \supsetneq \dots \supsetneq C_{p,i} \supsetneq 0$,
\item The collection of submodules $\sigma_i=\{ V_0, C_{0,i} , \dots, V_p, C_{p,i}, Z_0, \dots, Z_k\} $ has a common basis,
\item The flags $C_{0,i} \supsetneq \dots \supsetneq C_{p,i} $ and $C_{0,j} \supsetneq  \dots \supsetneq C_{p,j} $ are distinct for all $i\neq j$. This means there is at least one index $\ell$ with $C_{\ell, i} \neq C_{\ell, j}$. 
\end{itemize} 
A submodule $W$ is a vertex of $T(M,\sigma_1) \cap \dots \cap T(M,\sigma_r) $ precisely when $W$ is a proper nonzero submodule of $M$ compatible with $\sigma_i =  \{C_{0,i} , \dots, C_{p,i}\} \cup \sigma$ for each $i=1, \dots, r$.  This set of submodules $W$ contains  $\sigma$ and is closed under sum and intersection with the submodules in $\sigma$.

\begin{proof}[Proof of \autoref{ContractibleTits}] This proof is inspired by Charney \cite{Charney} and by the proof of the Solomon--Tits Theorem in Bestvina's notes \cite[Theorem 5.1]{Bestvina-MorseTheory}. 

Let $n=\rank(M)$. We proceed by induction on $n$. Let $$X=T(M,\sigma_1) \cap \dots \cap T(M,\sigma_r). $$ 
By \autoref{BigFlagLemma}, $X$ is a full subcomplex of $T(M)$. In other words, given a flag $U_0 \subseteq U_1 \subseteq \dots \subseteq U_p$ in $M$, if each $U_i$ is compatible with each of $\sigma_1, \dots ,\sigma_r$, the lemma ensures that $\{U_0, \dots, U_p\}$ is compatible with each of $\sigma_1, \dots ,\sigma_r$. Thus a set of vertices in $X$ span a simplex if and only if they form a flag. 

We will proceed by using repeated application of combinatorial Morse theory (\autoref{Morse}).  We will partition the vertices of $X$ into sets $$A_0 \sqcup A_1 \sqcup A_2 \sqcup \cdots \sqcup A_{n-2} \sqcup B_1 \sqcup B_2 \sqcup \cdots \sqcup B_{\rank(V_0)}.$$ This partition defines a filtration of $X$, 
$$  F_0 \; \;  \subseteq \;\;   F_1 \; \;  \subseteq \; \;  \cdots \; \;  \subseteq \; \;  F_{n-2} = G_0 \;\;   \subseteq \; \;  G_1 \; \;   \subseteq \;  \;  \cdots \;\;   \subseteq\; \;  G_{\rank(V_0)} =X$$
where for $k\geq 0$ the complex $F_k$ is the full subcomplex of $X$ on the vertices $A_0  \sqcup \cdots \sqcup A_k$, and for $k \geq 1$ the complex $G_k$ is the full subcomplex on the vertices $A_0 \sqcup \cdots \sqcup A_{n-2} \sqcup B_1 \sqcup B_2 \sqcup \cdots \sqcup B_{k}$. We will apply \autoref{Morse} to each successive step in this filtration. 
The partition of the vertices, and the associated filtration of $X$, are defined as follows.  
\begin{itemize}
\item $A_0$ is the set of vertices $W$ in $X$ comparable to $V_0$ under containment. That is, 
\begin{align*} A_0 & = \left\{ W \;  \middle |\; W \subseteq V_0 \text{ or } V_0 \subseteq W \right\}. \\ 
F_0& =\Star_X(V_0), \text{ the full subcomplex of $X$ on $A_0$.}
\end{align*}
\item The sets $A_1, \dots, A_{n-2}$ are vertices $W$ not comparable to $V_0$ with $V_0 + W \neq M$, further partitioned by rank. Observe that such modules $W$ must have rank at most $(n-\rank(V_0)-1) \leq (n-2)$. Specifically, we define, 
\begin{align*} 
 \quad & \quad  \text{for $k \geq 1$, }   \\ 
 A_k  &=  \left\{ W \;  \middle |\; \begin{array}{l}  W \text{ a vertex of $X$}, \, W \not\in A_0, \\  \rank(W)=k,  \,  W + V_0 \neq M \end{array} \right\}. \\ 
F_k & = \text{  the full subcomplex of $X$} \\& \quad \text{obtained from $F_{k-1}$ by adding the vertices $A_k$.} 
 \end{align*}

\item The sets $B_1, \dots, B_{\rank(V_0)}$ are vertices $W$ not comparable to $V_0$ satisfying $V_0 + W =M$, further partitioned by rank. Specifically, 
\begin{align*} 
G_0 & = F_{n-2} \\ 
  &   \text{ and for $k \geq 1$, }   \\ 
 B_k & =   \left\{ W \;  \middle | \; \begin{array}{l}  W \text{ a vertex of $X$},  \\  \rank(W)=n-k,  \,  W + V_0 = M \end{array} \right\}. \\ 
G_k  & = \text{  the full subcomplex of $X$} \\ & \quad \text{obtained from $G_{k-1}$ by adding the  vertices  $B_k$.}  \end{align*}
\end{itemize}

Observe that $F_0=\Star_X(V_0)$  is contractible by construction. 
We first show, by induction on $k$, that the full subcomplex $F_k$ of $X$ is contractible for each $k=0,1, \dots, n-2$. 
For $k \geq 1$, there are no edges in $X$ between vertices of $A_k$ since they have the same rank as $R$-modules.  By \autoref{Morse}, then, to prove $F_k$ is contractible, it suffices to show that $ \Link_X(W) \cap F_{k-1} $ is contractible for all $W \in A_k$. 

So suppose $A_k$ is nonempty and let $W \in A_k$. Define the shorthand 
$$ \Link^>_X(W)  = \text{the full subcomplex of $X$ on vertices } \{ U \in X  \mid   U \supsetneq W\}  $$ $$ \Link^<_X(W)  = \text{the full subcomplex of $X$ on vertices }\{ U \in X \mid   U \subsetneq W\}. $$ Since $X$ is a full subcomplex of $T(M)$, both subcomplexes are contained in  $\Link_X(W) $, and we may express $\Link_X(W) $ as the join
$$\Link_X(W) =  \Link^>_X(W)  * \Link^<_X(W) .$$ 
Since $F_{k-1}$ is a full subcomplex of $X$, 
$$ \Link_X(W) \cap F_{k-1} = \big( \Link^>_X(W) \cap F_{k-1} \big) * \big(\Link^<_X(W) \cap F_{k-1} \big).  $$
To show that $\Link_X(W) \cap F_{k-1}$  is contractible it then suffices to show $\Link^>_X(W) \cap F_{k-1}$ is contractible. 

We will show that the submodule $(V_0 +W)$ is a cone point of $ \Link^>_X(W) \cap F_{k-1} $.  We first check that $(V_0+W)$ is a vertex in  $F_{k-1}$. Since $V_0$,  $W$, and $\sigma_i$ are compatible for any $i$ by definition of $X$, the sum $(V_0+W)$ is split and compatible with $\sigma_i$. 
Moreover,  $V_0 \subseteq (V_0+W) \neq M$, so we see that $(V_0+W) \in F_0 \subseteq F_{k-1}$ as claimed. Next, since $W \subsetneq (V_0+W)$, we deduce  $(V_0 +W) \in \Link^>_X(W) \cap F_{k-1} $. 

We must verify that, for any vertex $U \in \Link^>_X(W) \cap F_{k-1}$, the modules $U$, $W$, and $(V_0+W)$ form a chain under containment. Because $X$ is a full subcomplex of $T(M)$, this will imply that $(V_0+W)$ is a cone point of $ \Link^>_X(W) \cap F_{k-1} $.

Let $U \in \Link^>_X(W) \cap F_{k-1}$. The statement $U \in \Link^>_X(W)$ means there is containment $W \subsetneq U$, which implies $U \in F_0$ since rank$(U)$ is too large for $U$ to be contained in $A_1, A_2, \dots, A_{k-1}$. Thus, $U$ is comparable to $V_0$ under containment. 
We cannot have containment $U \subseteq V_0$, as then the containment $W \subseteq U \subseteq V_0$ would imply $W \in A_0$. 
Hence $V_0 \subseteq U$, and $W \subseteq (V_0+W) \subseteq U$ as desired.  We conclude that $(V_0+W)$ is a cone point of $\Link^>_X(W) \cap F_{k-1}$, hence $\Link_X(W) \cap F_{k-1}$ is contractible.

Now, $$F_{n-2} = \text{the full subcomplex of $X$ on vertices } \{W \mid  V_0+W \neq M\};$$ this includes the vertices $W\subseteq V_0$ and $V_0 \subseteq W$. We have shown this subcomplex is contractible. We begin our second sequence of applications of \autoref{Morse}. We set $G_0=F_{n-2}$. Recall that we defined, for $k \geq 1$, the set $B_k$ to be those vertices $W$ of $X$ of rank $(n-k)$ satisfying $W+V_0=M$. 
%
%
We denoted by $G_k$ the associated subcomplex in the filtration of $X$.
Let $d=\rank(V_0)$, so $X=G_{d}$. Assume by induction that $G_{k-1}$ is contractible.  Since there are no edges between vertices of $B_k$, we may apply \autoref{Morse}, and show $G_k$ is contractible by verifying $ \Link_X(W) \cap G_{k-1} $ is contractible for all $W \in B_k$. 

Again we may decompose this subcomplex as the join, 
$$ \Link_X(W) \cap G_{k-1} = \big( \Link^>_X(W) \cap G_{k-1} \big) * \big(\Link^<_X(W) \cap G_{k-1} \big),  $$
and we will verify that $\Link_X(W) \cap G_{k-1}$  is contractible by showing  $\Link^<_X(W) \cap G_{k-1}$ is contractible. We first assume that $k<d$. 
Let 
\begin{align*} f\colon \Link^<_X(W) \cap G_{k-1} & \longrightarrow \Link^<_X(W) \cap G_{k-1} \\ 
U & \longmapsto U + (W \cap V_0).  \end{align*}
To check this function is well-defined, we first verify that $U + (W \cap V_0)$ is a vertex of $X$ and contained in $\Link^<_X(W)$. 
We note that $\{U, V_0, W\}$ must be compatible, and compatible with $\sigma_i$ for all $i$, by definition of $X$. Thus $U + (W \cap V_0)$ is split in $M$, and compatible with $\sigma_i$ for all $i$.  We next check that $U + (W \cap V_0) \subsetneq W$, which will establish that $U + (W \cap V_0)$ is a vertex of   $\Link^<_X(W)$. By  \autoref{mapfdefined}, it suffices to show that 
 $U+V_0  \neq M$.  Since $U \subsetneq W$, we know $\rank(U)<\rank(W)$. This implies $U$ cannot be contained in the set $B_i$ for $i \leq k-1$, and $U$ must be contained in one of $A_0, A_1, \dots, A_{n-2}$. It follows from the definition of the sets $A_k$ that  $U+V_0  \neq M$ as claimed. 
 
 We further note that the module $U + (W \cap V_0)$ is a vertex of $G_{k-1}$, as $G_{k-1}$ contains all vertices of $X$ that are submodules of rank at least $\rank(U)$. We have established that $U + (W \cap V_0)$ is a vertex of  $\Link^<_X(W) \cap G_{k-1}$. 

Because $f$ is an order-preserving self-map of a poset, it is a homotopy equivalence onto its image (see Quillen \cite[1.3 ``Homotopy Property"]{Quillen-Poset}). Its image is contractible since $W \cap V_0$ is a cone point. Here we are using the assumption $k<d$ to ensure $W \cap V_0 \neq 0$. We have proved by induction that the subcomplex $G_{d-1} \subseteq X$ is contractible.



%

Finally, consider the case $k=d$. It remains to add the vertices $B_d$ of $X$ that are direct complements to $V_0$ in $M$. Recall this whole proof is by induction on $n=\rank(M)$. We will first verify the case $k=d$ in the base case, $\rank(M)=2$, by showing that in this case, $B_d = \varnothing$. In general if $B_d$ is empty, we have already shown $X=G_{d-1}$ is contractible. 

To carry out the $\rank(M)=2$ case, we will make two general claims that we will wish to invoke again later.  Recall that the complex $X$ was defined by the data
\begin{align*}
\tau  = & \, \{V_0, V_1, \dots, V_p\}, \quad 0 \subsetneq V_0 \subsetneq V_1 \subsetneq \dots \subsetneq V_p \subsetneq M \\
\tau  \subseteq & \,  \sigma,  \\ 
\tau_i  = & \,  \{(V_0, C_{0,i}), (V_1, C_{1,i}), \dots, (V_p, C_{p,i})\}, \quad M \supsetneq  C_{0,i} \supsetneq C_{1,i} \supsetneq \dots \supsetneq C_{p,i}  \supsetneq  0,  \\ 
& V_{\ell} \oplus C_{\ell,i} = M \text{ for all $\ell$} \\ 
\sigma_i =&  \,\sigma \cup \{C_{0,i}, \dots C_{p,i}\}
\end{align*} 
We claim: 
\begin{equation} \label{C0=W} \text{ If $W \in B_d$, then $C_{0,i}=W$ for all $i$.}
\end{equation} 
\begin{equation}\label{Claimp>0} \text{If $\tau$ has dimension $p=0$, then $B_d$ is empty.}
\end{equation} 
To verify Claim (\ref{C0=W}), observe that for any $W \in B_d$, the modules $\{W, V_0, C_{0,i}\}$ must have a common basis for each $i$ by definition of $X$. Since $W$ and $C_{0,i}$ are both direct complements to $V_0$, this is possible only if $W=C_{0,i}$ for each $i$, establishing the claim. We now use this result to verify Claim (\ref{Claimp>0}). Suppose $\tau = \{V_0\}$. By hypothesis, there are at least two distinct preimages $\tau_1 = \{(V_0, C_{0,1}) \}$ and $\tau_2 = \{(V_0, C_{0,2}) \}$ of $\tau$ in $ST(M)$. But Claim (\ref{C0=W}) then implies that $C_{0,1} = W = C_{0,2}$ for $W \in B_d$, contradicting the assumption that $\tau_1$ and $\tau_2$ are distinct.  Claim (\ref{Claimp>0}) follows. The claim implies, in particular, that if $\rank(M)=2$, then $B_d= \varnothing$. 

Now suppose $n=\rank(M) > 2$. Assume by induction that the statement of \autoref{ContractibleTits} holds for all $2 \leq n' <n$ or if $B_d$ is empty.  Assume $B_d$ is nonempty and fix $W \in B_d$. 
We let $\hat \tau$, $\hat \sigma$, and  $\hat \tau_i$, respectively, be obtained by intersecting $W$ with all of the submodules corresponding to vertices of $\tau$, $\sigma$, and $\tau_i$, after removing duplicates and removing any submodules equal to $0$ or $W$. We will see that $\hat \tau$, $\hat \sigma$, and  $\hat \tau_i$  are simplices of $T(W)$, ${\CB}(W)$, and $ST(W)$, respectively. Let $\hat \sigma_i = \hat \sigma \cup \hat \tau_i$.  We wish to apply the inductive hypothesis, and so 
we must check that all the hypotheses of \autoref{ContractibleTits} hold for $W$ and the simplices $\hat \tau$, $\hat \sigma$, $\hat \tau_i$, $\hat \sigma_i$. 

To do this, we make a third general claim: 
\begin{equation} \label{TauClaim} 
\begin{array}{r@{}l}
 \cdot \; & V_0 \cap W   =  0 \\ 
\cdot \;  &C_{0,i}  = W \text{ for all $i=1, \dots, r$} \\ 
\cdot \;  & W \supsetneq C_{1,i} \supsetneq C_{2,i} \supsetneq \dots \supsetneq C_{p,i} \supsetneq 0  \text{ for all $i$,}  \\
&  \text{ in particular, }  C_{\ell,i} \cap W = C_{\ell,i} \text{ for all $\ell$ and all $i$,}    \\ 
\cdot \; & (V_{\ell} \cap W) \oplus C_{\ell,i} = W  \text{ for all $i$ and $\ell$}   \\[1em]
   &   \text{Thus, }\\[1em] 
    \hat \tau & \; = \{V_1 \cap W,  \dots, V_p \cap W\} \\
\hat \tau_i & \; = \{(V_1 \cap W, \; C_{1,i}), \dots, (V_p \cap W, \; C_{p,i})\}. \\

\end{array}
\end{equation}

The first three items follow from the definition of $B_d$ and by Claim (\ref{C0=W}). The statement that $(V_{\ell} \cap W) \oplus C_{\ell,i} = W$ then follows from \autoref{CommonBasisIntersectionSum}. 
Claim (\ref{TauClaim}) follows.  

We now observe that the flag $\hat \tau$ in $W$ is nonempty. By Claim (\ref{Claimp>0}), $p>0$. Thus by Claim (\ref{TauClaim}) $$0 \; \;   \subsetneq \; \;   (V_p \cap W)\; \;    \subsetneq \; \;  W = (V_p \cap W) \oplus C_{p,i},$$ and so $V_p \cap W$ is a vertex of $\hat \tau$. We can therefore deduce from Claim (\ref{TauClaim}) that $ \hat \tau_i$ is indeed a simplex of $ST(W)$, and $\hat \tau_i$ projects to the flag $\hat \tau$ in $T(W)$ under the map $ST(W) \to T(W)$. 
Moreover, since $p>0$ by Claim (\ref{Claimp>0}) and $W \supsetneq C_{p,i} \supsetneq 0$ by Claim (\ref{TauClaim}), we further deduce that rank$(W) \geq 2$. 

We note that, since the common basis property is closed under taking intersection (\autoref{CommonBasisClosure}), the modules $\hat \sigma_i$ must have the common basis property. 

We next check that the simplices $\hat \tau_i$ are distinct. The assumption that the simplices $\tau_i$  of $ST(M)$ are distinct means that the chains $C_{0,i} \supsetneq C_{1,i} \supsetneq \dots \supsetneq C_{p,i}$ are pairwise distinct for $i=1, \dots, r$. But by Claim (\ref{C0=W}),  $W=C_{0,i}$ for all $i$, so $C_{1,i} \supsetneq \dots \supsetneq C_{p,i}$ must be pairwise distinct chains, and the simplices $\hat \tau_i$ of $ST(W)$ (as described in Claim (\ref{TauClaim}))  are distinct.  We conclude that all hypotheses of our inductive hypothesis hold for $W$, $\hat \tau$, $\hat \sigma$, $\hat \tau_i$, $\hat \sigma_i$.

To finish the proof, we will show that 
$$\Link^{<}_X(W) \cap G_{d-1} = \Link^{<}_X(W) =  T(W,\hat \sigma_1) \cap \dots \cap T(W,\hat \sigma_k).$$ 
Both $\Link^{<}_X(W)$ and  $T(W,\hat \sigma_1) \cap \dots \cap T(W,\hat \sigma_k)$ are full subcomplexes of $T(W)$, so it suffices to show that they have the same vertex set.  To do this, first let $U \in \Link^{<}_X(W)$, that is, $U$ is a summand of $W$ such that $\{U,W\}\cup \sigma_i$ are compatible for all $i$. Hence $U$ is compatible with $\hat \sigma_i$ for all $i$ since taking closure under intersection preserves compatibility (\autoref{CommonBasisClosure}). It follows that $$\Link^{<}_X(W) \subseteq  T(W,\hat \sigma_1) \cap \dots \cap T(W,\hat \sigma_k).$$
Now suppose $U \in T(W,\hat \sigma_1) \cap \dots \cap T(W,\hat \sigma_k)$, that is, $U$ is a summand of $W$ that is compatible with $\hat \sigma_i$ for all $i$. Fix $i$. We can find a basis of $W$ that is compatible with $\hat \sigma_i$ and $U$. Because $W$ is compatible with $\sigma_i$, we can also find a basis of a complement of $W$ that is compatible with $\sigma_i$. The union of these bases is a common basis of $\{U,W\} \cup \sigma_i$, proving that $U \in \Link^{<}_X(W)$. 


Since $ 2 \leq \rank(W) < \rank(M)=n$, we may assume the complex $$ \Link^{<}_X(W) \cap G_{d-1} =  T(W,\hat \sigma_1) \cap \dots \cap T(W,\hat \sigma_k)$$ is contractible by the inductive hypothesis on $n$. By \autoref{Morse}, this completes the proof. 
\end{proof}

We now prove the analogue of \autoref{ContractibleTits} for higher buildings. 

\begin{proposition} \label{higherContractible}
Let $M$ be a free $R$-module with $R$ a PID.  Let $\tau=\{V_0,\dots,V_p\}$ be a nonempty simplex of $T(M)$ and let $\sigma$ be a simplex of ${\CB}(M)$ containing $\tau$. Let $\tau_1,\dots,\tau_k$ be distinct simplicies of $ST(M,\sigma)$ in the preimage of $\tau$ under the map $ST(M,\sigma) \m T(M,\sigma)$.  Let $\sigma_i$ be the simplex of $\CB(M)$ obtained by adding the vertices of $\tau_i$ to $\sigma$. If $k \geq 2$, $a \geq 1$ and $\rank (M) \geq 2$, then $$T^{a,b}(M,\sigma_1) \cap \dots \cap T^{a,b}(M,\sigma_k) $$ is contractible.

\end{proposition}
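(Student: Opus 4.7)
The plan is to induct on $a+b$, with base case $a=1$, $b=0$ furnished by \autoref{ContractibleTits}. For the inductive step, write $X := T^{a,b}(M,\sigma_1) \cap \dots \cap T^{a,b}(M,\sigma_k)$, and isolate one factor of the ambient join: a $T(M)$ factor when $a \geq 2$, or (when $a=1$ and $b \geq 1$) an $ST(M)$ factor. Let $X' \subseteq X$ denote the subcomplex of simplices whose component in the isolated factor is empty. Then $X' = \bigcap_i T^{a',b'}(M,\sigma_i)$ where $(a',b') = (a-1,b)$ or $(1, b-1)$ respectively; in either case $a' \geq 1$, so the inductive hypothesis supplies contractibility of $X'$.

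I would then build up $X$ from $X'$ by adding the vertices in the isolated factor via repeated application of combinatorial Morse theory (\autoref{Morse}), stratified by rank: in a $T(M)$ factor by $\rank$ of the submodule, and in an $ST(M)$ factor by $\rank(P)$ of the first component of the splitting $(P,Q)$. Within each rank stratum no two vertices span an edge, since split submodules of equal rank cannot be strictly comparable and splittings $(P,Q), (P',Q')$ with $\rank(P)=\rank(P')$ cannot satisfy $P \subsetneq P'$. For each newly added vertex $v$, the link in the current subcomplex decomposes (using $\Link_{T(M)}(V) = T(V) * T(M/V)$, which is contractible whenever either factor is non-trivial) into a join involving a smaller higher-Tits intersection of the form $\bigcap_i T^{a',b'}(M, \sigma_i \cup v)$. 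The inductive hypothesis applies to this link once one verifies that the enlarged data $(\tau, \sigma \cup v, \tau_i, \sigma_i \cup v)$ satisfies the hypotheses of \autoref{higherContractible}: $\tau$ is unchanged and still contained in $\sigma \cup v$, each $\tau_i$ remains a simplex of $ST(M, \sigma \cup v)$ since $v$ is compatible with each $\sigma_i$ (being a vertex of $T^{a,b}(M,\sigma_i)$ for all $i$), and distinctness of the $\tau_i$ is inherited.

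The main obstacle I expect is the Morse-theoretic bookkeeping: one must verify that each intermediate subcomplex is a full subcomplex of the next stage (which should follow from \autoref{BigFlagLemma} applied factor-by-factor, as in \autoref{RelativeTitsFull}) and correctly identify the links of added vertices as smaller instances of the proposition so that the inductive hypothesis can be invoked. A secondary subtlety is that in the $ST(M)$-factor case, adding a vertex $(P,Q)$ introduces two submodules simultaneously, so compatibility with each $\sigma_i$ must be tracked slightly more carefully; however, this requires no new ideas beyond those already present in the proof of \autoref{ContractibleTits}.
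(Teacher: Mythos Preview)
Your overall inductive strategy---induct on $a+b$ with base case \autoref{ContractibleTits}, isolate one join factor, and identify the subcomplex $X'$ with empty isolated factor as a smaller instance---matches the paper exactly. The gap is in the Morse step. Your claim that the link of a newly added vertex $v$ ``decomposes into a join involving $\bigcap_i T^{a',b'}(M,\sigma_i \cup v)$'' is not correct. The definition of $T^{a,b}(M,\sigma)$ imposes the common-basis condition \emph{globally} across all join factors, so the link of a vertex in the isolated factor does not split as a join of (something in that factor) with (a higher-Tits intersection in the remaining factors). Concretely, if $v = V$ lies in the isolated $T(M)$ factor and you have already added all lower-rank vertices there, then the link of $V$ in the current filtration consists of simplices in the remaining factors \emph{together with} submodules $W \subsetneq V$ in the isolated factor, all subject to the single global compatibility condition with each $\sigma_i \cup \{V\}$. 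This hybrid object is not of the form $\bigcap_i T^{a',b'}(M,\sigma_i')$, and the inductive hypothesis does not apply to it. (Incidentally, $\Link_{T(M)}(V) \cong T(V) * T(M/V)$ is generally not contractible: by Solomon--Tits each factor is a wedge of spheres.)

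The paper avoids this by filtering $X$ by the \emph{skeleton} of the isolated factor and invoking the generalized Morse lemma \autoref{Morse2}, which attaches whole simplices rather than vertices. If $F_i(X)$ denotes the subcomplex where the isolated factor has dimension at most $i-1$, and $S_i$ is the set of $(i-1)$-simplices $\rho$ of that factor, then $\Link_{F_i(X)}(\rho)$ is forced to have \emph{empty} isolated-factor component (any additional vertex there would push the dimension to $i$), and hence equals precisely $\bigcap_j T^{a',b'}(M,\sigma_j \cup \rho)$, to which the inductive hypothesis applies directly. So the issue you flagged as ``bookkeeping'' is in fact structural: the rank filtration with \autoref{Morse} does not produce links of the required shape, whereas the skeletal filtration with \autoref{Morse2} does.
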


\begin{proof}
We prove this by induction on $a+b$. The case that $a+b=1$ (so necessarily $a=1, b=0$) is \autoref{ContractibleTits}. Assume $a+b \geq 2$ and the statement holds for all $a'+b'<a+b$ with $a'\geq 1$. Let 
$$X=T^{a,b}(M,\sigma_1) \cap \dots \cap T^{a,b}(M,\sigma_k)$$
 and let $F_i(X)$ be the subcomplex of $X$ consisting of simplices: 
$$ (V_{0,1} < \dots< V_{n_1,1}  ) * \dots * (V_{0,a} < \dots< V_{n_a,a}  ) * (M_{0,1},\dots,M_{m_1,1}  ) * \dots * (M_{0,b},\dots,M_{m_b,b}  ) $$
 with $m_b \leq i$ if $b \geq 1$ or $n_a \leq i-1$ if $b=0$. In other words, we filter 
$$X \subseteq \underbrace{T(M) * \dots * T(M)}_{a\ \text{times}} * \underbrace{ST(M) * \dots * ST(M)}_{b\ \text{times}}$$ 
by the skeletal filtration of the last term in the join (the last $ST(M)$ factor, or the last $T(M)$ factor if $b=0$). The filtered piece $F_i(X)$ has simplices of dimension at most $i-1$ in the $(a+b)^{\mathrm{th}}$ join factor. In particular, when $i=0$, the last join factor must be the empty set, and
$$F_0(X) \cong  \left\{ \begin{array}{l}  T^{a,b-1}(M,\sigma_1) \cap \dots \cap T^{a,b-1}(M,\sigma_k), \text{ if $b> 0$} \\ T^{a-1,0}(M,\sigma_1) \cap \dots \cap T^{a-1,0}(M,\sigma_k), \text{ if $b= 0$}, \end{array}\right. $$ 
so $F_0(X)$ is contractible by induction on $a+b$. 

Fix $i>0$ and assume by induction on $i$ that $F_{i-1}(M)$ is contractible. Let $S_i$ denote the set of $(i-1)$-simplices of the last term of the join, i.e., the $(i-1)$-simplices of
$$   \left\{ \begin{array}{ll}   ST(M,\sigma_1) \cap \dots \cap ST(M,\sigma_k) , &\text{ if $b> 0$} \\ 
T(M,\sigma_1) \cap \dots \cap T(M,\sigma_k) , & \text{ if $b= 0$} . 
 \end{array}\right. $$ 
Apply \autoref{Morse2} to the complex $F_i(X)$, subcomplex $F_{i-1}(X)$, and set $S_i$. Then \begin{align*}F_i(X) &\simeq F_i(X)/F_{i-1}(X)   \qquad \qquad \text{since $F_{i-1}(X) $ is a contractible subcomplex,} 
\\& \simeq  \bigvee_{\rho \in S_i} \Sigma^{\dim(\rho)+1} \big( \Link_{F_i(X)}(\rho) \big)
\\& \simeq  \left\{ \begin{array}{ll}   \bigvee_{\rho \in S_i} \Sigma^{i}  \Big( T^{a,b-1}(M,\sigma_1 \cup \rho) \cap \dots \cap T^{a,b-1}(M,\sigma_k \cup \rho)\Big) &\text{ if $b> 0$} \\[1em] 
 \bigvee_{\rho \in S_i} \Sigma^{i} \Big( T^{a-1,0}(M,\sigma_1 \cup \rho) \cap \dots \cap T^{a-1,0}(M,\sigma_k \cup \rho) \Big)
& \text{ if $b= 0$} . 
 \end{array}\right. \end{align*} 
Here $\sigma_j \cup \rho$ denotes the simplex obtained by taking the union of the vertices of $\sigma_j$ and $\rho$. By induction on $a+b$, the complexes 
$$ T^{a,b-1}(M,\sigma_1 \cup \rho) \cap \dots \cap T^{a,b-1}(M,\sigma_k \cup \rho) \text{ and } T^{a-1,0}(M,\sigma_1 \cup \rho) \cap \dots \cap T^{a-1,0}(M,\sigma_k \cup \rho) $$ 
are contractible. Thus, by induction on $i$, the complex $F_i(X)$ is contractible  for all $i$. Since $F_{\rank(M)-1}(X)=X$, we conclude $X$ is contractible as claimed.  
\end{proof}

\subsection{Proof of the comparison theorem}

In this section, we prove that the homology of $T^{a,b}(M)$ only depends on the quantity $a+b$ if $a \geq 1$. We begin observing the following elementary topological lemma, which follows from the Mayer--Vietoris spectral sequence. 

\begin{lemma} \label{LemmaWedge}
Let $X$ be a $CW$ complex with $X=\cup_{\alpha \in S} X_\alpha$ for some subcomplexes $\{X_{\alpha}\}_{\alpha \in S}$.  Assume $\bigcap_{\alpha \in S} X_{\alpha}$ is not empty and let $p \in \cap_{\alpha \in S} X_{\alpha}$ be a 0-cell. View $X_\alpha$ as a based space with $p$ as the basepoint.  Assume all intersections of the form $$X_{\alpha_1} \cap \dots \cap X_{\alpha_k}$$ are acyclic provided $k \geq 2$ and each $\alpha_i$ is distinct. Then the map $$\bigvee_{\alpha \in S} X_\alpha \m X $$ is a homology equivalence.
\end{lemma}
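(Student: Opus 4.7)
The plan is to compare the Čech (Mayer--Vietoris) spectral sequences for $X$ and $Y := \bigvee_{\alpha \in S} X_\alpha$ via the natural map $f\colon Y \to X$. Since each $X_\alpha$ is a subcomplex of the CW complex $X$, we have a first-quadrant spectral sequence
\[
E^1_{p,q} = \bigoplus_{\alpha_0 < \dots < \alpha_p} H_q(X_{\alpha_0} \cap \dots \cap X_{\alpha_p}) \;\Longrightarrow\; H_{p+q}(X),
\]
and the analogous spectral sequence associated to the cover of $Y$ by its summands abuts to $H_*(Y)$. The inclusions $X_\alpha \hookrightarrow X$ assemble into $f$ and induce a morphism of these spectral sequences.

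Next, I would compute the $E^1$ pages. Since $\bigcap_{\alpha \in S} X_\alpha$ contains $p$, every intersection appearing in either spectral sequence is nonempty. For the spectral sequence of $Y$, every intersection of two or more distinct summands equals $\{p\}$, so $E^1_{p,0} = \bigoplus_{\alpha_0 < \dots < \alpha_p} \mathbb{Z}$ for $p \geq 1$ and $E^1_{p,q} = 0$ for $p \geq 1$, $q \geq 1$. For the spectral sequence of $X$, the hypothesis gives $H_q(X_{\alpha_0} \cap \dots \cap X_{\alpha_p}) = 0$ for $q \geq 1$ and $= \mathbb{Z}$ for $q = 0$ whenever $p \geq 1$, so the two $E^1$ pages agree for $p \geq 1$. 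In the column $p = 0$ both spectral sequences have $\bigoplus_\alpha H_q(X_\alpha)$. The map induced by $f$ is the identity in the $p = 0$ column; for $p \geq 1$, $q = 0$ it is the inclusion $H_0(\{p\}) \to H_0(X_{\alpha_0} \cap \dots \cap X_{\alpha_p})$, which is an isomorphism because the target is acyclic, hence path-connected, and contains $p$. By the comparison theorem for spectral sequences, $f_*\colon H_*(Y) \to H_*(X)$ is an isomorphism.

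The main potential obstacle is the setup of the Mayer--Vietoris spectral sequence when $S$ is infinite. Because $X$ is a CW complex whose cells lie in finitely many $X_\alpha$ each (being a subcomplex is a local condition, and each compact cell meets only finitely many cells of a CW structure), we can reduce to finite subsets $T \subseteq S$: each homology class of $X$ or $Y$ is supported on a finite subcomplex and hence on $\bigcup_{\alpha \in T} X_\alpha$ or $\bigvee_{\alpha \in T} X_\alpha$ for some finite $T$. The hypotheses pass to any finite $T$, and for finite covers the spectral sequence is obtained by iterating the usual pair Mayer--Vietoris sequence (or from the Čech double complex of subcomplexes). Passing to the filtered colimit over finite $T \subseteq S$ then yields the claim in general.
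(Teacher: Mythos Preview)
Your proof is correct and follows exactly the approach the paper indicates: the paper simply asserts that the lemma ``follows from the Mayer--Vietoris spectral sequence'' without giving details, and you have supplied those details by comparing the \v{C}ech/Mayer--Vietoris spectral sequences for the two covers and invoking the comparison theorem. One small quibble: your parenthetical claim that each cell lies in only finitely many $X_\alpha$ is not true in general (e.g.\ if many $X_\alpha$ share a common subcomplex), but this is irrelevant---the reduction to finite $T \subseteq S$ that you actually use only needs that every cycle is supported on finitely many cells, hence on some finite union $\bigcup_{\alpha \in T} X_\alpha$, which is correct.
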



We now prove the main result of this section which can be viewed as an unstable version of Waldhausen's additivity theorem.

\begin{theorem} \label{SplitvsNotSplit}
Let $R$ be a PID and $M$ a finite-rank free $R$-module. Let $\sigma$ be a (possibly empty) simplex of ${\CB}(M)$. Let $f\colon T^{a,b}(M, \sigma) \m T^{a+b,0}(M, \sigma)$ denote the map induced by forgetting complements. 
For $a\geq 1$, the map $f$ is a homology equivalence. 
\end{theorem}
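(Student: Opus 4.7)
My plan is to prove the theorem by induction on $b \geq 0$. The base case $b = 0$ is immediate, since $f$ is the identity. For the inductive step, I observe that $f$ factors as a composition of single-complement-forgetting maps of the form $T^{a',b'}(M,\sigma) \to T^{a'+1,b'-1}(M,\sigma)$, each with $a' \geq 1$. Since composites of homology equivalences are homology equivalences, it suffices to prove that each such single forgetful map is a homology equivalence. The case $\rank(M) \leq 1$ is vacuous (both complexes are empty), so henceforth I assume $\rank(M) \geq 2$.

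Fix $a \geq 1$, $b \geq 1$ and consider the forgetful map $g \colon T^{a,b}(M,\sigma) \to T^{a+1,b-1}(M,\sigma)$ applied in the $(a+1)$-st join factor, which is $ST(M)$ on the source side and $T(M)$ on the target side. I would filter both sides by the skeleton of this factor: let $F_i$ be the subcomplex of simplices whose $(a+1)$-st factor has dimension at most $i$. The map $g$ preserves this filtration, and the $i$-th associated graded of the target decomposes as a wedge $\bigvee_{\tau}\Sigma^{i}\, T^{a,b-1}(M,\sigma\cup\tau)_{+}$ indexed by $i$-simplices $\tau$ of $T(M,\sigma)$, while the $i$-th associated graded of the source decomposes as $\bigvee_{\tilde\tau}\Sigma^{i}\, T^{a,b-1}(M,\sigma\cup\tilde\tau)_{+}$ indexed by $i$-simplices $\tilde\tau$ of $ST(M,\sigma)$. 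Grouping the source summands by the image $\tau=p(\tilde\tau)$ under the projection $p\colon ST(M,\sigma)\to T(M,\sigma)$, it suffices to show that for each $\tau$ the natural map
\[
\bigvee_{\tilde\tau\,\mapsto\,\tau} T^{a,b-1}(M,\sigma\cup\tilde\tau)\;\longrightarrow\; T^{a,b-1}(M,\sigma\cup\tau)
\]
induced by subcomplex inclusions is a homology equivalence.

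This is where \autoref{LemmaWedge} enters. The subcomplexes $T^{a,b-1}(M,\sigma\cup\tilde\tau)$ genuinely cover $T^{a,b-1}(M,\sigma\cup\tau)$: any simplex compatible with $\sigma\cup\tau$ admits a common basis together with $\sigma\cup\tau$, and reading off complements of the $V_j$'s from this basis produces a lift $\tilde\tau$ with which the simplex is also compatible. The multi-intersection $T^{a,b-1}(M,\sigma\cup\tilde\tau_1)\cap\dots\cap T^{a,b-1}(M,\sigma\cup\tilde\tau_k)$ is, by construction, the subcomplex of simplices compatible with each $\sigma\cup\tilde\tau_i$ individually, and this is exactly the complex shown contractible in \autoref{higherContractible} whenever $k\geq 2$ (the hypotheses $a\geq 1$ and $\rank(M)\geq 2$ are met). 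Thus \autoref{LemmaWedge} delivers the homology equivalence on each associated graded piece, and a comparison of spectral sequences of the two filtrations concludes the proof.

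The main obstacle will be the combinatorial bookkeeping required to identify the associated graded pieces cleanly as the claimed wedges of pointed complexes, together with verifying the basepoint hypothesis in \autoref{LemmaWedge} (which can be handled either by passing to reduced homology or by a direct Mayer–Vietoris argument when a common 0-cell in the intersection fails to exist). The contractibility result \autoref{higherContractible} does all of the geometric work; everything else is organizing the filtration and ensuring that the cover/intersection structure on the $\tilde\tau$-summands matches the hypotheses of the two wedge-comparison lemmas.
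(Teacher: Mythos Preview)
Your proposal is correct and follows essentially the same strategy as the paper: filter by the skeleton of a single join factor, identify the associated graded via \autoref{Morse2}, and then use \autoref{LemmaWedge} together with \autoref{higherContractible} to compare the wedges. The paper carries out the argument by inducting on $a+b$ and peeling off the \emph{last} join factor, which leaves $T^{a,b-1}$ on the source but $T^{a+b-1,0}$ on the target; it then invokes the inductive hypothesis to replace each source summand $T^{a,b-1}(M,\rho\cup\sigma)$ by $T^{a+b-1,0}(M,\rho\cup\sigma)$ before applying \autoref{LemmaWedge}. Your organization---factoring $f$ into single-complement-forgetting maps and filtering by the transitioning $(a+1)$st factor---has the pleasant feature that both source and target associated graded pieces are already of the form $T^{a,b-1}(M,\,\cdot\,)$, so you can apply \autoref{LemmaWedge} and \autoref{higherContractible} directly without the extra inductive replacement step. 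Two small remarks: your ``induction on $b$'' is really just the composition argument (the single-step proof uses no inductive hypothesis), and your worry about the basepoint in \autoref{LemmaWedge} is unnecessary---since $a\geq 1$, the flag $\tau$ itself, placed in the first $T(M)$ factor, lies in every $T^{a,b-1}(M,\sigma\cup\tilde\tau)$ and hence in the total intersection, exactly as the paper observes.
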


The map $f$ is likely a homotopy equivalence, but  we only need and prove this weaker statement. 


\begin{proof}[Proof of \autoref{SplitvsNotSplit}]
We will prove this theorem by induction on $a+b$. The statement is immediate if $b=0$ (in particular if $a+b \leq 1$) so assume otherwise.  Let $F_i(T^{a,b}(M,\sigma) )$ and $F_i(T^{a+b,0}(M,\sigma) )$, respectively, be the filtrations induced by the skeletal filtration on the final join factor $ST(M)$ and $T(M)$, respectively, as in the proof of \autoref{higherContractible}. Explicitly, $F_i(T^{a,b}(M,\sigma) )$ is the subcomplex of $T^{a,b}(M,\sigma)$ consisting of simplices
$$ (V_{0,1} < \dots< V_{n_1,1}  ) * \dots * (V_{0,a} < \dots< V_{n_a,a}  ) * (M_{0,1},\dots,M_{m_1,1}  ) * \dots * (M_{0,b},\dots,M_{m_b,b}  ) $$
with $m_b \leq i$. The filtration $F_i(T^{a+b,0}(M,\sigma) )$ is defined analogously with $n_{a+b} \leq i-1$. 

Observe that $F_0(T^{a,b}(M,\sigma) )$ is isomorphic to $T^{a, b-1}(M,\sigma)$ if $b\geq 1$ and $T^{a-1, b}(M,\sigma) $ if $b=0$. Hence $F_0(T^{a,b}(M,\sigma) ) \to F_0(T^{a+b,0}(M,\sigma) )$ is a homology equivalence by induction on $a+b$.

Suppose $i>0$. Let $S_i^{a,b}$ denote the set of $(i-1)$-simplices of $ST(M,\sigma)$ and let $S_i^{a+b,0}$ denote the set of $(i-1)$-simplices of $T(M,\sigma)$. By \autoref{Morse2} and as in the previous proof,
$$F_i( T^{a,b}(M,\sigma)  )/ F_{i-1}( T^{a,b}(M,\sigma)  ) \simeq \bigvee_{\rho \in S_i^{a,b}} \Sigma^{i} \Big( T^{a,b-1}(M,\rho \cup \sigma) \Big)  \text{ and}$$
$$ F_i( T^{a+b,0}(M,\sigma)  )/ F_{i-1}( T^{a+b,0}(M,\sigma) \simeq \bigvee_{\rho \in S_i^{a+b,0}} \Sigma^{i} \Big( T^{a+b-1,0}(M,\rho \cup \sigma)\Big).$$ Note the distinct index sets of the wedges. 

We observe that $f$ preserves the filtrations above. We will prove that $f\colon T^{a,b}(M, \sigma) \m T^{a+b,0}(M, \sigma)$ is a homology equivalence by checking that $f$ induces a homology equivalence on the associated graded of this filtration.  To show $$ F_i( T^{a,b}(M,\sigma)  )/ F_{i-1}( T^{a,b}(M,\sigma)  ) \m F_i( T^{a+b,0}(M,\sigma)  )/ F_{i-1}( T^{a+b,0}(M,\sigma))$$ is a homology equivalence, it suffices to show  $$ f_{\tau}\colon  \bigvee_{\rho \in f^{-1}(\tau)} T^{a,b-1}(M,\rho \cup \sigma) \m T^{a+b-1,0}(M,\tau \cup \sigma) $$ is a homology equivalence for all $\tau \in S_i^{a+b,0}$. By our induction hypothesis, it suffices to show $$ \hat{f}_{\tau}\colon  \bigvee_{\rho \in f^{-1}(\tau)} T^{a+b-1,0}(M,\rho \cup \sigma) \m T^{a+b-1,0}(M,\tau \cup \sigma) $$ is a homology equivalence. We will show that $\hat{f}_{\tau}$ is an equivalence using \autoref{LemmaWedge} applied to a cover of $T^{a+b-1,0}(M,\tau \cup \sigma)$. We cover $T^{a+b-1,0}(M,\tau \cup \sigma)$ by subspaces of the form $T^{a+b-1,0}(M,\rho \cup \sigma) $ for $\rho \in f^{-1}(\tau)$. This is a cover because whenever a collection of submodules are compatible with a flag $\tau$, then the collection is compatible with some splitting $\rho \in f^{-1}(\tau)$. In other words, we can choose complements to the terms in the flag $\tau$ compatible with the collection of submodules.

We wish to apply \autoref{higherContractible}; observe that we may assume that rank$(M) \geq 2$ otherwise the map $f\colon T^{a,b}(M, \sigma) \m T^{a+b,0}(M, \sigma)$  has empty domain and codomain and hence is a homology equivalence. \autoref{higherContractible} implies that finite intersections of two or more subcomplexes in the cover are contractible. To see that $$ \bigcap_{\rho \in f^{-1}(\tau)}  T^{a+b-1,0}(M,\rho \cup \sigma)   $$ is not empty, observe that $\tau$ is a simplex of the intersection. \autoref{LemmaWedge} now implies that $\hat{f}_{\tau}$ and hence $f$ is a homology equivalence. 
\end{proof}

\section{Algebraic properties of higher Tits buildings} \label{Section-AlgebraicPropertiesHigherTitsBuildings}

In this section, we describe spaces $D^{a,b}(M)$ which are homotopy equivalent to iterated suspensions of the complexes $T^{a,b}(M)$ studied in the previous section. The reason for considering two different models of higher Tits buildings is that the complexes $T^{a,b}(M)$ are smaller and hence more convenient for combinatorial Morse theory while the complexes $D^{a,b}(M)$ are larger and have better algebraic properties. We begin with categorical preliminaries.

\subsection{Categorical framework} \label{CategoricalSubsection}
 
In this subsection, we recall basic facts about Day convolution and bar constructions. Throughout, we will fix a ring $R$, which we sometimes suppress from the notation. We remark that no constructions or results in \autoref{CategoricalSubsection} require $R$ to be a PID. 

\begin{definition}

Let $\VB$ denote the groupoid with objects finite-rank free $R$-modules and morphisms all $R$-linear isomorphisms.  A \emph{$\VB$-module} is a functor from $\VB$ to the category $\Ab$ of $\Z$-modules. Let $\Mod_{\VB}$ denote the category of $\VB$-modules. Similarly define $\VB$-sets, $\VB$-based sets, $\VB$-chain complexes, and $\VB$-based spaces.
\end{definition}

The notation $\VB$ is used in the field of representation stability to connote ``vector spaces" and ``bijective maps" \cite{PutmanSam}. We sometimes use the notation $\VB(R)$ to stress the dependence of the groupoid on $R$.  The groupoid $\VB(R)$ is equivalent to the groupoid $\GL(R)$ defined in the introduction, but this framework will conceptually simplify some arguments and clarify definitions like \autoref{Day}. 

Given a functor $V$ from $\VB$ to some category and a finite-rank free $R$-module $M$, let $V(M)$ denote the value of $V$ on $M$. Let $V_n$ denote $V(R^n)$. Let $\cC=\Ab$, ${\Set_*}$, $\Ch$, or ${\Top_*}$.  The category of functors $\Fun(\VB,\cC)$ has a symmetric monoidal product. It is an instance of a construction called \emph{Day convolution} or induction product.

\begin{definition} \label{Day}
 Let $\cC= \Ab$, ${\Set_*}$, $\Ch$, or ${\Top_*}$ and let $V,W \in \Fun(\VB,\cC)$. Let $\otimes_{\cC}$ denote tensor product if $\cC$ is $\Ab$ or $\Ch$, and smash product if $\cC$  is ${\Set_*}$ or ${\Top_*}$.   Let $V \otimes_{\VB} W \in \Fun(\VB,\cC)$ be defined by  
 $$(V \otimes_{\VB} W)(M):= \coprod_{A \oplus B=M} V(A) \otimes_{\cC} W(B)$$ on objects. An $R$-linear isomorphism $f \colon M \to M'$ induces the map 
 \begin{align*}
 \coprod_{A \oplus B=M} V(A) \otimes_{\cC} W(B) \longrightarrow \coprod_{A' \oplus B'=M'} V(A') \otimes_{\cC} W(B')
 \end{align*}
 that maps the factor indexed by the decomposition $A \oplus B=M$ to the factor indexed by $f(A) \oplus f(B) = M'$ and maps $V(A) \otimes_{\cC} W(B)$ to $V(f(A)) \otimes_{\cC} W(f(B))$ via functoriality.  
\end{definition}
In the indexing set of the coproduct, $A, B$ are submodules of $M$, and the sum $A \oplus B$ is their internal direct sum.  The symmetry $(V \otimes_{\VB} W)(M) \to (W \otimes_{\VB} V)(M)$ is defined as the map 
$$ \coprod_{A \oplus B=M} V(A) \otimes_{\cC} W(B) \longrightarrow \coprod_{A \oplus B=M} W(A) \otimes_{\cC} V(B)$$ 
 induced by the symmetry  $V(A) \otimes_{\cC} W(B) \to W(B) \otimes_{\cC} V(A)$ from the summand $A \oplus B = M$ to the summand $B \oplus A = M$. Associativity is also induced by associativity of $\otimes_{\cC}$. 

In the case of $\VB$-modules,  $$(V \otimes_{\VB} W)_n \cong \bigoplus_{a +b=n} \Ind^{\GL_n(R)}_{\GL_a(R) \times \GL_b(R)}V_a \otimes_{\cC} W_b.$$ The product $\otimes_{\VB}$ gives a symmetric monodial structure on $\Fun(\VB,\cC)$ with unit the object defined in \autoref{NotationUnit} (see, for example, \cite[Section 2.1]{MNP}). This product is an instance of Day convolution, which in general endows a symmetric monoidal structure on a functor category from a (essentially) small symmetric monoidal category to a symmetric monoidal category; see (for example) \cite[Section 2]{MayZhangZou}.

\begin{notation} \label{NotationUnit} Let $\cC= \Ab$, ${\Set_*}$, $\Ch$, or ${\Top_*}$. Given an object $X$ of $\mathcal{C}$, we may view $X$ as a functor in $\Fun(\VB, \mathcal{C})$ that takes the value $X$ on modules of rank $0$ and takes the zero object of $\mathcal{C}$ elsewhere. With this convention, the unit of $(\mathcal{C}, \otimes_{\mathcal{C}})$ defines a unit in $(\Fun(\VB,\cC), \otimes_{\VB})$. 
 
 We denote this unit generically by ${\bf 1}$. We may write $\Z$ for the unit when $\cC= \Ab$ or $\cC=\Ch$. We may write $S^0$ for the unit when $\cC={\Set_*}$ or $\cC={\Top_*}$. 
 \end{notation}


 

This symmetric monoidal structure  $(\Fun(\VB,\cC), \otimes_{\VB}, {\bf 1})$  allows us to make sense of (unital) monoid objects, right/left-modules over monoids, etc. Note that $\mathbf 1$ admits a unique structure as a monoid object. If $\cC$ is $\Ab$ or $\Ch$, we often call monoid objects \emph{$\VB$-rings}.

\begin{definition}
Let $A$ be a monoid in $\Mod_{\VB}$ with $M$ a right $A$-module and $N$ a left $A$-module. Let $M \otimes_A N$ be the coequalizer of the two natural maps $$ M \otimes_{\VB} A \otimes_{\VB} N \rightrightarrows M \otimes_{\VB} N.$$
\end{definition}

The functor  $\cdot \otimes_A N$ is right exact, and the category of $A$-modules has enough projectives (see, for example, \cite[Proof of Theorem 5.9]{MNP}). We can therefore define its left-derived functors. 

\begin{definition}
Let $A$ be a monoid in $\Mod_{\VB}$ with $M$ a right $A$-module and $N$ a left $A$-module.
 Let $\Tor^i_A(\;\cdot\;,N)$ denote the $i$th left-derived functor of $\cdot \otimes_A N$.
\end{definition}

These $\Tor$ groups can be used to formulate the notion of Koszul rings.

\begin{definition} \label{DefnAugmented}

Let $\cC= \Ab$, ${\Set_*}$, $\Ch$, or ${\Top_*}$. We say a monoid $A$ in $\Fun(\VB, \cC)$ is \emph{augmented} if it is equipped with a map of monoids $A \m {\bf 1}$.  An augmented monoid $A$ is \emph{connected} if this map is an isomorphism in $\VB$-degree $0$. 
\end{definition}

\begin{definition} \label{DefnKoszul}
An augmented $\VB$-ring $A$ in $\Mod_{\VB}$ is called \emph{Koszul} if $$\Tor^A_i(\Z,\Z)_n \cong 0 \qquad \text{ for $n \neq i$.} $$   In this case, we define the \emph{Koszul dual} of $A$ to be the $\VB$-module with value $\Tor^A_n(\Z,\Z)_M$ on a rank-$n$ free $R$-module $M$. 
\end{definition}

As in classical algebra, $\Tor$ groups can be computed via bar constructions.

\begin{definition}
Let $\cC= \Ab$, ${\Set_*}$, $\Ch$, or ${\Top_*}$. Let $A$ be a monoid in $\Fun(\VB,\cC)$, $M$ a right $A$-module and $N$ a left $A$-module. Let $B_\bullet(M,A,N)$ be the simplicial object in $\Fun(\VB,\cC)$ with $p$-simplices $$B_p(M,A,N):=M \otimes_{\VB} A^{\otimes_{\VB}p} \otimes_{\VB} N,$$ face maps induced by the monoid and module multiplication maps \begin{align*} M \otimes_{\VB} A &\m M, \\  A \otimes_{\VB} A& \m A,\\ A \otimes_{\VB} N &\m N, \end{align*} and degeneracies induced by the unit of $A$. 

For $\cC=\Ch$, let $B(M,A,N) \in \Fun(\VB,\Ch)$ denote the total complex of the double complex of normalized chains associated to the simplicial object  $B_\bullet(M,A,N)$. For $\cC=\Ab$, we make the same definition by viewing the abelian groups as chain complexes concentrated in homological degree 0. When $\cC$ is ${\Set_*}$  or ${\Top_*}$, let $B(M,A,N) \in \Fun(\VB,\Top_*)$ denote the thin geometric realization of $B_\bullet(M,A,N)$. We define the (thin or thick) geometric realization of a based object as the realization of the associated unbased object. The thin geometric realization will be based, since the inclusion of the basepoints induces a map from the constant simplicial set on a point; its thin realization is a point. 


\end{definition}

The following result appears in Miller--Patzt--Petersen \cite[Proposition 2.33]{MillerPatztPetersen}. 

\begin{proposition} \label{PropositionTorViaBarConstruction} Let $\cC=\Ab$. Then $$H_i(B(M,A,N)) \cong \Tor_i^A(M,N)$$ if either $M_n$ is a free abelian group for all $n$ or $N_n$ is a free abelian group for all $n$. 
\end{proposition}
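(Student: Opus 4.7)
The plan is to follow the standard argument that bar constructions compute $\Tor$, adapted to the Day convolution setting on $\Fun(\VB, \Ab)$. Assume without loss of generality that $N_n$ is $\Z$-free for all $n$; the case where $M_n$ is free is symmetric upon replacing $B(A,A,N) \to N$ by $B(M,A,A) \to M$. I would show that $B(A,A,N) \to N$ is a flat resolution of $N$ as a left $A$-module, and then identify $M \otimes_A B(A,A,N)$ with $B(M,A,N)$ to conclude.

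First I would construct the augmentation $\epsilon\colon B_\bullet(A,A,N) \to N$ induced by the left $A$-action on $N$. The extra degeneracy $s_{-1}\colon B_p(A,A,N) \to B_{p+1}(A,A,N)$ obtained by inserting the unit $\mathbf{1} \to A$ into the leftmost slot provides a simplicial contraction of the augmented object. This yields a chain homotopy equivalence on the associated unnormalized Moore complexes, and the Dold--Kan correspondence then produces a quasi-isomorphism on normalized chain complexes. Hence $B(A,A,N) \to N$ is a quasi-isomorphism in $\Fun(\VB,\Ch)$.

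Second I would verify flatness. Each $B_p(A,A,N)$ decomposes as $A \otimes_{\VB} X_p$ with $X_p := A^{\otimes p} \otimes_{\VB} N$, where the left $A$-action uses only the outermost factor. A direct unpacking of the coequalizer defining $\otimes_A$ gives the extended-module identity $Y \otimes_A (A \otimes_{\VB} X_p) \cong Y \otimes_{\VB} X_p$, natural in the right $A$-module $Y$. Therefore $(-)\otimes_A B_p(A,A,N)$ is exact provided $X_p$ is degreewise $\Z$-flat. By the explicit description of Day convolution in \autoref{Day}, each $(X_p)_n$ is a direct sum of tensor products in which $N_b$ appears as a factor; since $N_b$ is free abelian, each summand is $\Z$-flat, hence so is $(X_p)_n$. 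Thus $B(A,A,N) \to N$ is a flat resolution.

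Combining the two steps, $\Tor^A_i(M,N) \cong H_i(M \otimes_A B(A,A,N))$, and applying the extended-module identity degreewise gives $M \otimes_A B_p(A,A,N) \cong M \otimes_{\VB} X_p = B_p(M,A,N)$, compatibly with face and degeneracy maps. Thus $M \otimes_A B(A,A,N) \cong B(M,A,N)$, completing the argument. The main subtlety will be the bookkeeping in verifying that Day convolution commutes appropriately with the coequalizer defining $\otimes_A$; once this formal identity is established, the remaining reductions are routine, and the freeness hypothesis enters only to guarantee degreewise $\Z$-flatness of $X_p$.
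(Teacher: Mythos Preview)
The paper does not give its own proof of this proposition; it simply cites Miller--Patzt--Petersen \cite[Proposition~2.33]{MillerPatztPetersen}. So there is no argument in the paper to compare against, only your proposal to evaluate.

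Your outline is the standard one, and most of it is fine: the extra degeneracy does give a $\VB$-linear contraction of $B_\bullet(A,A,N)\to N$, the extended-module identity $Y\otimes_A(A\otimes_{\VB}X)\cong Y\otimes_{\VB}X$ is correct, and the identification $M\otimes_A B_\bullet(A,A,N)\cong B_\bullet(M,A,N)$ goes through. The gap is in the flatness step. You write that each summand $A_{a_1}\otimes_\Z\cdots\otimes_\Z A_{a_p}\otimes_\Z N_b$ of $(X_p)_n$ is $\Z$-flat ``since $N_b$ is free abelian.'' That inference is false: freeness of one tensor factor does not make a tensor product flat. If some $A_{a_i}$ has torsion, then $A_{a_1}\otimes\cdots\otimes A_{a_p}\otimes N_b$ is a direct sum of copies of a torsion group and is not $\Z$-flat. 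So $B_p(A,A,N)$ need not be a flat $A$-module, and your ``flat resolution'' claim is unjustified as stated.

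Two ways to repair this. First, the argument you wrote becomes correct if one also assumes each $A_n$ is $\Z$-flat; this hypothesis holds in the paper's only application, since $\St_n(R)$ is free abelian by Solomon--Tits. Second, without hypotheses on $A$ one can instead run a bicomplex argument: take a projective resolution $P_\bullet\to M$ with $P_q=V_q\otimes_{\VB}A$ for $V_q$ a projective $\VB$-module, form $D_{p,q}=B_p(P_q,A,N)$, and compare the two spectral sequences. One collapses to $\Tor^A_*(M,N)$ because $V_q\otimes_{\VB}(-)$ preserves the contracting homotopy of $B_\bullet(A,A,N)\to N$. For the other, the relevant exactness input is that the functor $(-)\otimes_{\VB}N$ is exact whenever $N$ is degreewise $\Z$-free, since in each $\VB$-degree it is a direct sum of functors $Y\mapsto Y_a\otimes_\Z N_b\cong Y_a^{(\mathrm{rk}\,N_b)}$. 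This is where the hypothesis on $N$ actually enters, and it does not require anything of $A$.
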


\begin{definition}

 Let $\cC=\Ab$, ${\Set_*}$,  $\Ch$, or ${\Top_*}$.  Let $A$ be an augmented monoid in $\Fun(\VB,\cC)$. Let $BA_{\bullet} :=B_{\bullet}({\bf 1},A,{\bf 1})$ and $BA :=B({\bf 1},A,{\bf 1})$.
\end{definition} 

The following is May  \cite[Definition 11.2]{MayGeometryIteratedLoopSpaces}.

\begin{definition}
Let $X_\bullet$ be a simplicial space. The \emph{$p$th latching object} $sX_p \subseteq X_p$ is the union of the images of the degeneracy maps $s_i:X_{p-1} \to X_p$. The simplicial space $X_\bullet$ is called \emph{proper} if $(X_p,sX_p)$ is a strong NDR-pair.

\end{definition}

\begin{lemma} \label{Bconn}
Let $A$ be a connected augmented monoid object in  $\VB$-based spaces. Let $\beta \geq -2$. If $A(R^n)$  is $(\alpha n+\beta)$-connected for all $n$ and $BA_{\bullet}(R^n)$ is a  proper simplicial space for all $n$  then $BA(R^n)$ is $(\alpha n+\beta+1)$--connected for all $n$.
\end{lemma}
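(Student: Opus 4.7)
The plan is to analyze the skeletal filtration of the thin geometric realization $BA(R^n) = |BA_\bullet(R^n)|$. Since $BA_\bullet(R^n)$ is proper by hypothesis, the skeletal filtration $F_0 \subseteq F_1 \subseteq F_2 \subseteq \cdots$ fits into cofiber sequences
\[
F_{p-1}(R^n) \hooklongrightarrow F_p(R^n) \lra \Sigma^p\bigl(BA_p(R^n)/sBA_p(R^n)\bigr),
\]
so it suffices to bound the connectivity of each successive quotient $\Sigma^p\bigl(BA_p(R^n)/sBA_p(R^n)\bigr)$ for $p \geq 1$ and then induct on $p$, starting from $F_0(R^n) = {\bf 1}(R^n)$ which is the basepoint (hence infinitely connected) for $n \geq 1$.

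First I would identify the latching object $sBA_p$. By definition $BA_p = A^{\otimes_{\VB} p}$, and the degeneracy maps are induced by the unit ${\bf 1} \to A$. Because $A$ is connected in the sense of \autoref{DefnAugmented}, this unit is an isomorphism in $\VB$-degree zero, so inserting the unit at the $i$th position corresponds precisely to Day-convolution summands whose $i$th slot is a rank-zero submodule. Unpacking \autoref{Day} at level $R^n$, this identifies
\[
BA_p(R^n)/sBA_p(R^n) \;\cong\; \bigvee_{\substack{R^n = A_1 \oplus \dots \oplus A_p \\ \rank(A_i) \geq 1}} A(A_1) \wedge \dots \wedge A(A_p),
\]
where the wedge ranges over ordered $p$-tuples of nonzero submodules of $R^n$ whose internal direct sum is $R^n$.

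Next, each factor $A(A_i)$ is $(\alpha n_i + \beta)$-connected for $n_i = \rank(A_i) \geq 1$. Using the standard fact that the smash product of $p$ well-based spaces of connectivities $c_1, \dots, c_p$ is $\bigl(\sum_{i=1}^p c_i + (p-1)\bigr)$-connected, each wedge summand, and hence the wedge itself, is at least
\[
\sum_{i=1}^p (\alpha n_i + \beta) + (p-1) \;=\; \alpha n + p(\beta+1) - 1
\]
connected. Suspending $p$ times shows that $\Sigma^p\bigl(BA_p(R^n)/sBA_p(R^n)\bigr)$ is at least $\alpha n + p(\beta+2) - 1$ connected. For every $p \geq 1$ and $\beta \geq -2$ we have $p(\beta+2) \geq \beta+2$, so each filtration quotient is at least $(\alpha n + \beta + 1)$-connected.

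Induction on $p$ via the cofiber sequence then yields that $F_p(R^n)$ is $(\alpha n + \beta + 1)$-connected for all $p$, and passing to the sequential colimit $BA(R^n) = \colim_p F_p(R^n)$ preserves this connectivity. The one bookkeeping item to watch is the inequality $p(\beta+2) \geq \beta+2$: this fails precisely when $\beta < -2$, which is why the hypothesis $\beta \geq -2$ is exactly what makes the edge case deliver an honest increase in connectivity by one. I do not expect any serious obstacle beyond correctly identifying the latching object with ``at least one rank-zero slot'' (which uses connectedness of $A$) and tracking the constant $p(\beta+2) - 1$ through the suspension.
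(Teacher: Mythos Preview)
Your proposal is correct and follows essentially the same strategy as the paper, just unpacked a level further. The paper bounds the connectivity of the entire space of $p$-simplices $BA_p(R^n)$ (not the latching quotient), observes $(\alpha n + p\beta + p - 1) \geq (\alpha n + \beta + 1) - p$ for $p\geq 1$ and $\beta\geq -2$, invokes properness to identify thick and thin realizations, and then cites the semi-simplicial connectivity lemma of Ebert--Randal-Williams \cite[Lemma~2.4]{EbertRW} (equivalently May \cite[Theorem~11.12]{MayGeometryIteratedLoopSpaces}) as a black box. Your argument is exactly the proof of that cited lemma, carried out in situ: you pass to $BA_p/sBA_p$, identify it explicitly as the wedge over decompositions with all ranks~$\geq 1$ (using connectedness of $A$ to pin down the image of the degeneracies), and run the cofiber-sequence induction by hand. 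The numerical inequality you isolate, $p(\beta+2)\geq \beta+2$, is the same inequality $(p-1)(\beta+2)\geq 0$ the paper uses. So there is no genuine difference in method, only in how much is quoted versus verified directly; your version has the small advantage that by working with the nondegenerate quotient you avoid the degenerate summands (those with some $n_i=0$) altogether, which makes the connectivity bookkeeping slightly cleaner.
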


\begin{proof} Observe $BA_{p}(R^n)$ is 
$$ \bigvee_{n_1 + n_2+ \dots + n_p=n} \; \; \bigwedge_{i=1, \dots, p} A(R^{n_i}).$$
When $p=0$, this is contractible, and hence $(\alpha n+\beta+1)$-connected. In general, $BA_{p}(R^n)$  is  $(\alpha n+p\beta+p-1)$-connected, and $(\alpha n+p\beta+p-1) \geq (\alpha n+\beta+1-p)$ for $p>0$.

Since $BA_{\bullet}(R^n)$ is assumed to be proper, its thick and thin realizations are homotopy equivalent (see, for example, tom Dieck \cite[Proposition 1]{Dieck}). The result now follows from Ebert--Randal-Williams \cite[Lemma 2.4]{EbertRW}. See also May \cite[Theorem 11.12]{MayGeometryIteratedLoopSpaces}.
\end{proof}

If $A$ is a commutative monoid object, then so is $BA$. Here it is essential that we use thin geometric realizations instead of thick geometric realizations; otherwise, these spaces will not have units. This result is a generalization of the classical fact that the bar construction of an abelian monoid is again a monoid; see for example May \cite[Corollary 11.7]{MayGeometryIteratedLoopSpaces}, 
\cite[Remarks 8.10]{MayClassifyingSpaces}.  Thus, we can iterate the bar construction.

\subsection{Simplicial models of higher buildings}

In this section, we introduce spaces $D^{a,b}(M)$, and prove they are homotopy equivalent to iterated suspensions of the complexes $T^{a,b}(M)$.
\begin{definition}
Let $L_\bullet \colon \VB \to \mathrm{Fun}(\Delta^{op}, \mathrm{Set})$ be the $\VB$-simplicial-set whose value on a finite-rank free $R$-module $M$ is the simplicial set whose $p$-simplices $L_p(M)$ is the set of flags $V_0 \subseteq \dots \subseteq V_p $ of (not necessarily distinct, not necessarily proper, not necessarily nonzero) summands of $M$. The degeneracy map $s_i\colon L_p(M) \m L_{p+1}(M)$ is the map that duplicates the $i$th summand. The face map $d_i\colon L_p(M) \m L_{p-1}(M)$ is the map that forgets the $i$th summand.  An $R$-linear isomorphism acts in the obvious way. Let $\mathring{L}_\bullet$ denote the $\VB$-sub-simplicial-set where $\mathring{L}_p(M)$ is the set of flags $V_0 \subseteq \dots \subseteq V_p $ with $V_0 \neq 0$ or $V_p \neq M$.  
\end{definition}

It is straightforward to verify that the face and degeneracy maps commute with the maps induced by morphisms in $\VB$. 

We use the notation $L_\bullet$ to connote ``lattices"  (in the sense of Rognes \cite{Rog1}), and $\mathring{L}_\bullet$  are referred to as ``not full lattices". We now define their split versions. 

\begin{definition}
Let $SL_\bullet\colon \VB \to \mathrm{Fun}(\Delta^{op}, \mathrm{Set})$ be the $\VB$-simplicial-set, which, when evaluated on a free $R$-module $M$, has $p$-simplices $SL_p(M)$ given by splittings $(M_0 , \dots , M_{p+1})$ of $M$ into (not necessarily proper or nonzero) summands. Define the degeneracy map $s_i\colon SL_p(M) \m SL_{p+1}(M)$ to be the map that inserts the zero summand between $M_i$ and $M_{i+1}$. Define the face map $d_i\colon SL_p(M) \m SL_{p-1}(M)$ to be the map that replaces $M_i$ and $M_{i+1}$ with $M_i \oplus M_{i+1}$. Let $S\mathring{L}_\bullet$ denote the $\VB$-sub-simplicial-set where $S\mathring{L}_p(M)$ is the set of splittings $(M_0, \dots,  M_{p+1} )$ with $M_0 \neq 0$ or $M_{p+1} \neq 0$.

An $R$-linear isomorphism $f\colon M \to M'$ induces a map $SL_p(M) \to SL_p(M')$ mapping 
$$ (M_0 , \dots , M_{p+1}) \longmapsto (f(M_0) , \dots , f(M_{p+1})).$$ 
\end{definition}

\begin{definition}
Let $L^{a,b}_{\bullet,\dots,\bullet}$ be the $(a+b)$-fold $\VB$-simplicial-set whose value on a free $R$-module $M$ and a tuple $(p_1,\dots,p_{a+b})$ is the subset of 
$$L_{p_1}(M) \times \dots \times L_{p_a}(M) \times SL_{p_{a+1}}(M) \times \dots \times SL_{p_{a+b}}(M)$$ of summands that together have a common basis. The $\VB$-multi-simplicial structure is induced from the  $\VB$-simplicial structures on $L_\bullet$ and $SL_{\bullet}$. 
Let $\mathring{L}^{a,b}_{\bullet,\dots,\bullet}$ be the $(a+b)$-fold $\VB$-subsimplicial-set of $L^{a,b}_{\bullet,\dots,\bullet}$ whose $(n_1, \dots, n_{a+b})$-simplices are contained in the $(n_1, \dots, n_{a+b})$-simplices of 
$$L_{n_1}(M) \times \dots \times L_{n_{i-1}}(M)  \times \mathring{L}_{n_i}(M)  \times  L_{n_{i+1}}(M)  \times  \dots   \times L_{n_a}(M)  \times SL_{n_{a+1}}(M)  \times \dots \times SL_{n_{a+b}}(M) $$ for some $1 \leq i \leq a$ or  $$L_{n_1}(M)  \times \dots \times   L_{n_a}(M)  \times SL_{n_{a+1}}(M)  \times \dots  \times SL_{n_{i-1}}(M)  \times S\mathring{L}_{n_i}(M)  \times  SL_{n_{i+1}}(M)  \times \dots \times SL_{n_{a+b}}(M) $$ for some $(a+1) \leq i \leq (a+b)$. 
\end{definition}

\begin{definition}
Let $D^{a,b}_{\bullet,\dots,\bullet}$ be the $(a+b)$-fold $\VB$-simplicial-based-set defined by the quotients

$$ D^{a,b}_{p_1, \ldots, p_{a+b}}(M) = L^{a,b}_{p_1, \ldots, p_{a+b}}(M) / \mathring{L}^{a,b}_{p_1, \ldots, p_{a+b}}(M)$$ 
with basepoint given by the image of $\mathring{L}^{a,b}_{p_1, \ldots, p_{a+b}}(M)$ and $\VB$-multisimplicial-structure induced by $L^{a,b}_{\bullet, \dots, \bullet}$. 
\end{definition}

Given a (based) multi-simplicial space $X_{\bullet,\dots,\bullet}$, we let $X_\bullet$ denote the diagonal and let $X$ denote the thin geometric realization. 

\begin{definition}  \label{Dab}
Let $$\mu_{n_1,\dots,n_{a+b}}\colon  D^{a,b}_{n_1,\dots,n_{a+b}}(M) \wedge D^{a,b}_{n_1,\dots,n_{a+b}}(N) \longrightarrow D^{a,b}_{n_1,\dots,n_{a+b}}(M \oplus N)$$ be the map defined factorwise  by the formula 
$$(V_0 \subseteq \dots \subseteq V_p) \times (W_0 \subseteq \dots \subseteq W_p) \longmapsto (V_0 \oplus W_0 \subseteq \dots \subseteq V_p \oplus W_p)$$ on $L_\bullet$ factors and defined by $$(M_0, \dots, M_p) \times (N_0, \dots, N_p) \longmapsto (M_0 \oplus N_0, \dots, M_p \oplus N_p)$$ on $SL_\bullet$ factors.
\end{definition}

Then $D^{a,0}_{\bullet}$ (we referred to its realization as $D^a$ in the introduction) is Rognes' higher building \cite[Definition 3.9]{Rog1}, and  $D^{0,b}_{\bullet, \dots, \bullet}$ is Galatius--Kupers--Randal-Williams' split version \cite[Definition 5.9]{e2cellsIV}. The fact that these constructions agree with those of Rognes and Galatius--Kupers--Randal-Williams is not immediate but follows quickly from Galatius--Kupers--Randal-Williams \cite[Lemma 5.6 and Lemma 5.7]{e2cellsIV}. In \cite[Remark 5.2]{e2cellsIV}, Galatius--Kupers--Randal-Williams explain that their notions of $k$-dimensional buildings and stable buildings are isomorphic to Rognes’; compare \cite[Definition 5.4]{e2cellsIV} with \cite[Definition 3.9]{Rog1} and  \cite[Definition 5.5]{e2cellsIV} with \cite[Definition 10.8]{Rog1}.

\begin{lemma}
The maps $\mu_{n_1,\dots,n_{a+b}} $ assemble to form a simplicial map:  $$\mu_{\bullet,\dots,\bullet}\colon  D^{a,b}_{\bullet,\dots,\bullet}(M) \wedge D^{a,b}_{\bullet,\dots,\bullet}(N) \m D^{a,b}_{\bullet,\dots,\bullet}(M \oplus N).$$
\end{lemma}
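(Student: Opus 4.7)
The plan is to verify this lemma by checking three things in turn: (a) that each map $\mu_{n_1, \dots, n_{a+b}}$ is well-defined as a map of based sets between the quotients $D^{a,b}_{n_1, \dots, n_{a+b}}$; (b) that the collection of maps commutes with the face and degeneracy operators in each of the $(a+b)$ simplicial directions; and (c) that these two facts together give a morphism of $(a+b)$-fold multisimplicial based sets on the diagonal.

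For step (a), first I would check that the factorwise formula indeed produces a simplex of $L^{a,b}_{n_1,\dots,n_{a+b}}(M \oplus N)$: each $V_i \oplus W_i$ is a summand of $M \oplus N$ since $V_i \subseteq M$ and $W_i \subseteq N$ are summands, and an analogous statement holds for the splitting factors. The common basis property for the image follows by concatenating a common basis for the $V$'s in $M$ with a common basis for the $W$'s in $N$, which yields a basis of $M \oplus N$ with the required property. Second, I would check that $\mu$ descends to the quotient, i.e.\ that $\mu$ sends $\mathring L^{a,b} \wedge L^{a,b} \cup L^{a,b} \wedge \mathring L^{a,b}$ into $\mathring L^{a,b}$. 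In the flag case, if $V_0 \neq 0$ then $V_0 \oplus W_0 \neq 0$; if $V_{n_j} \neq M$ then projection to $M$ shows $V_{n_j} \oplus W_{n_j} \subsetneq M \oplus N$. The analogous statement for splittings is immediate from the formula $M_0 \oplus N_0$, $M_{p+1} \oplus N_{p+1}$, and symmetrically in the $W$ variable. This shows the basepoint is preserved and $\mu$ defines a based map of the quotients.

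For step (b), I would verify the simplicial identities directionwise. On an $L_\bullet$-factor, the degeneracy $s_i$ duplicates the $i$th term of the flag and the face $d_i$ deletes it; since the formula $(V_\bullet, W_\bullet) \mapsto V_\bullet \oplus W_\bullet$ acts entrywise on flags, both operations commute with $\mu$ essentially by inspection (using $V_i \oplus W_i = V_i \oplus W_i$ for degeneracies and the fact that removing a term on both sides matches removing the corresponding term in the sum). On an $SL_\bullet$-factor, $s_i$ inserts a zero summand and $d_i$ merges two adjacent summands; the formula $(M_\bullet, N_\bullet) \mapsto (M_j \oplus N_j)_j$ is compatible with both operations because $0 \oplus 0 = 0$ and $(M_i \oplus N_i) \oplus (M_{i+1} \oplus N_{i+1}) = (M_i \oplus M_{i+1}) \oplus (N_i \oplus N_{i+1})$. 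Finally, since different simplicial directions act on independent factors, naturality across the multisimplicial structure follows from the factorwise nature of the definition; passing to the diagonal gives the required simplicial map.

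No step here is a serious obstacle; the only non-trivial point worth emphasizing is the well-definedness on the quotient, which relies on the elementary but essential fact that a strict containment $V_p \subsetneq M$ forces $V_p \oplus W_p \subsetneq M \oplus N$. Everything else is a direct unwinding of definitions and a routine check of simplicial identities.
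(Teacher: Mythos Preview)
Your proposal is correct. The paper does not actually give a proof of this lemma; it is stated without justification, and the surrounding text only remarks that verifying the \emph{subsequent} two lemmas is ``elementary but involved.'' Your outline supplies exactly the routine checks that the paper omits: that the factorwise direct-sum formula lands in $L^{a,b}$ with the common basis property preserved, that it carries $\mathring{L}^{a,b}$ into $\mathring{L}^{a,b}$ (hence descends to the quotient $D^{a,b}$), and that it commutes with the face and degeneracy maps on each $L_\bullet$ and $SL_\bullet$ factor. One small remark: your final sentence about ``passing to the diagonal'' is unnecessary---the lemma asserts a map of $(a+b)$-fold multisimplicial based sets, and you have already established this in step (b) by checking compatibility with the simplicial operators in every direction separately.
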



Observe that $D^{a,b}_{0,\dots,0}(M)$ is  the basepoint for all $M\neq 0$. By convention, $D^{a,b}_{0,\dots,0}(0)$ is $S^0$ since $L^{a,b}_{0,\dots,0}(0)$ is a point and $\mathring{L}^{a,b}_{0,\dots,0}(0)$ is empty; similarly for the split versions of these complexes.
In particular, there is a natural isomorphism $\iota \colon  {\bf 1} \m  D^{a,b}_{0,\dots,0}$. 

\begin{definition}
Let ${\bf 1}_{\bullet,\dots,\bullet} $ be the constant $(a+b)$-fold simplicial  $\VB$-based set on ${\bf 1}$.  Let $\iota_{\bullet,\dots,\bullet} \colon {\bf 1}_{\bullet,\dots,\bullet}  \m  D^{a,b}_{\bullet,\dots,\bullet}$ be the unique extension of the natural map $\iota \colon  {\bf 1} \m  D^{a,b}_{0,\dots,0}$. 
\end{definition}

Verifying the following two lemmas is elementary but involved. Compare \autoref{GKRW6.6} to Galatius--Kupers--Randal-Williams \cite[Lemma 6.6]{e2cellsIV}. \autoref{GKRW6.6} follows from associativity and commutativity of direct sums. \autoref{Bsplit} follows from the fact that sum and intersection operation are well-behaved for modules with the common basis property; see (for example) \autoref{CommonBasisIntersectionSum}. 

\begin{lemma} \label{GKRW6.6}
The maps $\mu_{\bullet,\dots,\bullet}$ and $\iota_{\bullet,\dots,\bullet} $ give $D^{a,b}_{\bullet,\dots,\bullet}$ the structure of a commutative monoid object in the category of $(a+b)$-fold simplicial $\VB$-based sets.
\end{lemma}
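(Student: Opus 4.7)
The plan is to directly verify the three monoid axioms—well-definedness, associativity, commutativity, and unitality—by reducing each one to the corresponding structural property of direct sum on the groupoid $\VB$.

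First I would check that $\mu_{\bullet,\dots,\bullet}$ descends to a well-defined map out of the smash product. There are two things to verify at each multidegree. (a) The output has the common basis property: if $\{V_{i,j}\}$ is a compatible family of summands of $M$ (with a common basis $B_M$) and $\{W_{i,j}\}$ is a compatible family of summands of $N$ (with a common basis $B_N$), then $B_M \cup B_N$ is a common basis for $\{V_{i,j}\oplus W_{i,j}\}$ in $M\oplus N$, so $\mu$ lands in $L^{a,b}$. (b) The basepoint subcomplex $\mathring{L}^{a,b}$ is swallowed: in an $L$-factor, if $V_0\ne 0$ then $V_0\oplus W_0\ne 0$, and if $V_p\ne M$ then $V_p\oplus W_p\ne M\oplus N$; the analogous statement holds for $SL$-factors using the extremal summands. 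Hence the restriction of $\mu$ to $\mathring{L}^{a,b}\times L^{a,b}$ and to $L^{a,b}\times\mathring{L}^{a,b}$ lands in $\mathring{L}^{a,b}$ in $M\oplus N$, which is exactly what is needed for the smash product to be well-defined.

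Next I would check commutativity. The symmetry in the Day-convolution tensor $D^{a,b}\otimes_{\VB}D^{a,b}$ at the object $M\oplus N$ reindexes a summand $A\oplus B=M\oplus N$ to $B\oplus A=N\oplus M$ via the swap isomorphism $\tau_{M,N}\colon M\oplus N\xrightarrow{\sim}N\oplus M$. Under $\tau_{M,N}$, the flag $V_0\oplus W_0\subseteq\cdots\subseteq V_p\oplus W_p$ is carried precisely to $W_0\oplus V_0\subseteq\cdots\subseteq W_p\oplus V_p$; the analogous statement holds factorwise for splittings. This is exactly the commutativity square for $\mu$. Associativity is verified in the same way using the canonical associator $(M\oplus N)\oplus P\cong M\oplus(N\oplus P)$ and the identity $(V_i\oplus W_i)\oplus U_i\mapsto V_i\oplus(W_i\oplus U_i)$, applied factorwise.

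For unitality, recall that $L^{a,b}_{0,\dots,0}(0)$ is a singleton while $\mathring{L}^{a,b}_{0,\dots,0}(0)$ is empty, so $D^{a,b}_{0,\dots,0}(0)=S^0$, and $\iota$ sends the non-basepoint of $\mathbf{1}$ to the unique flag/splitting at the zero module. Because $M\oplus 0=M$ in $\VB$ via the canonical unitor, the map $\mu$ applied to an element $(V_0\subseteq\dots\subseteq V_p)$ and to the unit becomes $(V_0\oplus 0\subseteq\dots\subseteq V_p\oplus 0)=(V_0\subseteq\dots\subseteq V_p)$, so $\iota$ is a two-sided unit (and the extension $\iota_{\bullet,\dots,\bullet}$ is forced since $\mathbf{1}_{\bullet,\dots,\bullet}$ is the constant simplicial object).

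The main obstacle is purely bookkeeping: one has to keep careful track of the wedge-of-smash structure of the multisimplicial set, of the indexing over $A\oplus B=M$ implicit in Day convolution, and of the many factors (some of $L$-type, some of $SL$-type). But once the direct-sum operation $\{V_i\},\{W_i\}\mapsto\{V_i\oplus W_i\}$ is shown to respect the common-basis property and the basepoint subcomplex, all three monoid axioms are inherited from the symmetric monoidal structure on $(\VB,\oplus,0)$ by strict factorwise verification.
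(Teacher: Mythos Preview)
Your proposal is correct and follows the same approach as the paper. The paper does not give a detailed proof of this lemma: it simply remarks that the verification is ``elementary but involved'' and that the result ``follows from associativity and commutativity of direct sums,'' referring to Galatius--Kupers--Randal-Williams \cite[Lemma~6.6]{e2cellsIV} for comparison. Your argument is exactly the intended expansion of that sentence---checking that $\mu$ descends to the smash product (preserving the common basis property and the basepoint subcomplex), and then reducing associativity, commutativity, and unitality factorwise to the symmetric monoidal structure $(\VB,\oplus,0)$. One small clarification: in the commutativity check, the symmetry of the Day convolution at a fixed object $M$ simply swaps the smash factors and reindexes the summand $A\oplus B=M$ to $B\oplus A=M$ without invoking an external isomorphism $\tau_{M,N}$; the equality $\mu(x,y)=\mu(y,x)$ then holds because internal sums $V_i+W_i$ in $M$ are literally commutative.
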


\begin{lemma} \label{Bsplit}
There is a natural isomorphism of $(a+b+1)$-fold simplicial $\VB$-based sets $B_\bullet D^{a,b}_{\bullet,\dots,\bullet} \cong D^{a,b+1}_{\bullet,\dots,\bullet}$.
\end{lemma}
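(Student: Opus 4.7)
The plan is to construct an explicit bijection of the $(a+b+1)$-fold multisimplicial $\VB$-based sets and then check compatibility with all the face, degeneracy, and $\VB$-morphism maps. At multidegree $(n_1,\dots,n_{a+b},p)$ and on a finite-rank free $R$-module $M$, unwinding \autoref{Day} and the definitions gives
\[
B_p D^{a,b}_{n_1,\dots,n_{a+b}}(M) \;=\; \bigvee_{M = M_1 \oplus \cdots \oplus M_p} D^{a,b}_{n_1,\dots,n_{a+b}}(M_1)\wedge \cdots \wedge D^{a,b}_{n_1,\dots,n_{a+b}}(M_p),
\]
where the wedge runs over ordered internal direct-sum decompositions of $M$ (and by convention $B_0 D^{a,b}_{n_1,\dots,n_{a+b}}(M) = \mathbf{1}(M)$). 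On the other side, a non-basepoint element of $D^{a,b+1}_{n_1,\dots,n_{a+b},p}(M)$ is a compatible tuple whose final $SL$-factor is a splitting $(M_0,M_1,\dots,M_p,M_{p+1})$ of $M$ of size $p+2$; the quotient by $S\mathring{L}$ in that factor forces $M_0=M_{p+1}=0$, leaving a splitting $(0,N_1,\dots,N_p,0)$ corresponding to a decomposition $M=N_1\oplus\cdots\oplus N_p$.

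The bijection at multidegree $(n_1,\dots,n_{a+b},p)$ is then the following. From the right, given a compatible tuple with last factor $(0,N_1,\dots,N_p,0)$, intersect each flag $V_{0,i}\subseteq\cdots\subseteq V_{n_i,i}$ and each splitting $(0,M_{0,j},\dots,M_{n_{a+j},j},0)$ with each $N_k$, producing data in $N_k$. The common basis property for the full tuple restricts to a common basis in each $N_k$ (the subset of basis vectors lying in $N_k$), yielding an element of $D^{a,b}_{n_1,\dots,n_{a+b}}(N_k)$ for each $k$, hence an element of the indicated wedge summand. In the reverse direction, given a decomposition $M=N_1\oplus\cdots\oplus N_p$ and a non-basepoint element in each $D^{a,b}_{n_1,\dots,n_{a+b}}(N_k)$, form flags and splittings in $M$ by taking factorwise direct sums, and append the splitting $(0,N_1,\dots,N_p,0)$ as the $(b{+}1)$st $SL$-factor. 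The union of the common bases in the $N_k$ is a common basis for the whole tuple in $M$.

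The main technical checks are that both maps land in the correct subcomplexes, and that they are mutually inverse. This is where the algebraic results of \autoref{SectionAlgebraicFoundations} enter: compatibility of a collection with a splitting $M=N_1\oplus\cdots\oplus N_p$ is equivalent (by \autoref{CommonBasisIntersection}, \autoref{CommonBasisIntersectionSum}, and \autoref{CommonBasisClosure}) to the assertion that each flag and splitting decomposes as the internal direct sum of its intersections with the $N_k$; so the sum-and-restrict operations are well-defined mutual inverses on underlying sets. One must also check that ``fullness'' (i.e.\ avoidance of $\mathring L^{a,b}$) is preserved in both directions: if each flag in $M$ begins at $0$ and ends at $M$, then its restriction to $N_k$ begins at $0$ and ends at $N_k$, and conversely factorwise direct sums of full flags (resp.\ splittings with nonzero endpoints) in the $N_k$ are full in $M$. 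Hence basepoints correspond to basepoints, and the bijection descends to the quotients by $\mathring L^{a,b}$ and $\mathring L^{a,b+1}$.

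What remains is to verify compatibility with the $(a+b+1)$ simplicial structures and with the $\VB$-action. Naturality in $\VB$ is immediate because both operations (restriction by intersection and factorwise direct sum) are preserved by $R$-linear isomorphisms. In the first $a+b$ simplicial directions the maps act solely on the flag or splitting data in the other factors, and on both sides the induced face and degeneracy maps are given by deletion and duplication of terms, which commute with restriction to, and assembly from, the $N_k$. The heart of the verification is the identification of the bar simplicial direction with the $(b{+}1)$st $SL$-direction: the interior face $d_i$ on $B_p$ is the multiplication map $\mu$ of \autoref{Dab}, which in coordinates takes the decomposition $(N_1,\dots,N_i,N_{i+1},\dots,N_p)$ to $(N_1,\dots,N_i\oplus N_{i+1},\dots,N_p)$ and combines the corresponding data by direct sum; this is exactly the effect of the $SL$-face $d_i$ on $(0,N_1,\dots,N_p,0)$. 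The boundary faces $d_0$ and $d_p$ of $B_\bullet$, which use the augmentation $D^{a,b}\to\mathbf 1$ and hence vanish unless the first or last tensor factor is at rank zero, correspond exactly to the $SL$-face maps $d_0$ and $d_p$, which merge the boundary zero summand with $N_1$ (resp.\ $N_p$) and send the result to the basepoint unless that summand is zero. Finally, the bar degeneracy inserting the unit $\mathbf 1\to D^{a,b}$ matches the $SL$-degeneracy inserting a zero summand. The anticipated main obstacle is the bookkeeping in this last paragraph—carefully matching the bar face/degeneracy formulas with the $SL$-face/degeneracy formulas and verifying that the compatibility (common basis) conditions are preserved under all of these operations—but given the algebraic machinery of \autoref{SectionAlgebraicFoundations}, each step is a direct unravelling of definitions.
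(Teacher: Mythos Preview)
Your plan is correct and is exactly the approach the paper has in mind: the paper does not actually write out a proof of this lemma, but only remarks that it ``follows from the fact that sum and intersection operation are well-behaved for modules with the common basis property; see (for example) \autoref{CommonBasisIntersectionSum}.'' Your explicit bijection via restriction-by-intersection and reassembly-by-direct-sum, together with the bookkeeping matching the bar face/degeneracy maps with the $SL$-face/degeneracy maps, is precisely the elementary-but-involved verification the paper alludes to. One trivial indexing slip: a non-basepoint element of the $j$th $SL$-factor in multidegree $n_{a+j}$ has the form $(0,M_{1,j},\dots,M_{n_{a+j},j},0)$ with $n_{a+j}$ interior terms, not $n_{a+j}+1$, but this does not affect the argument.
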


%


\subsection{Comparing models of higher buildings}

We now show that $D^{a,b}(M)$ is homotopy equivalent to an iterated suspension of $T^{a,b}(M)$. This fact appeared in Galatius--Kupers--Randal-Williams \cite[Lemma 6.1]{e2cellsIV} in the case $(a,b)$ is $(1,0)$, and the cases when $a=0$ or $b=0$ are implicit in their work.

\begin{lemma} \label{suspend}
$D^{a,b}(M) \simeq \Sigma^{a+b+1} T^{a,b}(M) $.
\end{lemma}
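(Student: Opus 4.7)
My plan is to compare the realization of the $(a{+}b)$-fold multisimplicial based set $D^{a,b}_{\bullet,\dots,\bullet}(M)$ to an explicit $(a{+}b{+}1)$-fold suspension of $T^{a,b}(M)$ by a direct analysis of non-degenerate, non-basepoint multisimplices, first handling the base cases and then extending to the general case, most likely by induction using \autoref{Bsplit}.

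First I would handle the base cases $(a,b)=(1,0)$ and $(a,b)=(0,1)$, which assert $D^{1,0}(M) \simeq \Sigma^2 T(M)$ and $D^{0,1}(M) \simeq \Sigma^2 ST(M)$. These are instances of Galatius--Kupers--Randal-Williams \cite[Lemma 6.1]{e2cellsIV}. The argument identifies the non-basepoint, non-degenerate $p$-simplices of $L_\bullet(M)/\mathring L_\bullet(M)$ as strict flags $0 \subsetneq V_1 \subsetneq \cdots \subsetneq V_{p-1} \subsetneq M$, which biject with $(p{-}2)$-simplices of $T(M)$, plus the single $p=1$ flag $0 \subsetneq M$. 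Analyzing face maps shows that $d_0$ and $d_p$ land in the basepoint and the remaining faces induce the face maps of $T(M)$; thus the realization exhibits a double suspension. The split version is analogous.

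For general $(a,b)$, I would identify a non-degenerate, non-basepoint $(p_1,\ldots,p_{a+b})$-multisimplex of $D^{a,b}(M)$ as an $a$-tuple of strict flags $0 \subsetneq V_{1,i} \subsetneq \cdots \subsetneq V_{p_i-1,i} \subsetneq M$ together with a $b$-tuple of nontrivial splittings $M = M_{1,i} \oplus \cdots \oplus M_{p_i,i}$, all sharing a common basis. These are in explicit bijection with the simplices of $T^{a,b}(M) \subseteq T(M)^{*a} * ST(M)^{*b}$ of dimension $\sum_i (p_i-2) + (a+b-1) = \sum_i p_i - (a+b+1)$, exhibiting the claimed dimension shift of $a{+}b{+}1$. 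To upgrade this cellular bijection to a homotopy equivalence, I would induct on $b$. The inductive step uses \autoref{Bsplit} to write $|D^{a,b+1}(M)| \simeq B|D^{a,b}(M)|$, combined with the fact that the (thin) bar construction on a connected commutative monoid in pointed spaces whose underlying space is (up to equivalence) a suspension $\Sigma X$ yields $\Sigma^2 X$, together with the identification of the appropriate bar construction on $T^{a,b}$ with $T^{a,b+1}$ (compatible with the underlying homology equivalence of \autoref{SplitvsNotSplit}). The base case $b=0$ is handled analogously to the $a=1$ case above, iteratively realizing the $a$ multisimplicial directions: each contributes a double suspension individually, but the common basepoint in each factor collapses these contributions to yield a total suspension of degree $a+1$ over $T^a(M)$, not $2a$.

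The main obstacle I expect is tracking the interplay between the common basis condition---which entangles the multisimplicial factors and forbids a naive product/smash argument---and the collapsing to basepoints that accounts for the suspension coordinates. The delicate point is ensuring that the final suspension degree is exactly $a{+}b{+}1$ and not $2(a{+}b)$; a careful CW-theoretic argument, or a clean inductive formulation via \autoref{Bsplit}, is needed to make this rigorous.
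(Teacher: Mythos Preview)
Your base-case analysis is fine and matches \cite[Lemma 6.1]{e2cellsIV}, but the inductive step via \autoref{Bsplit} has a genuine gap. The assertion that ``the bar construction on a connected commutative monoid whose underlying space is $\Sigma X$ yields $\Sigma^2 X$'' is false: the homotopy type of $BA$ depends on the monoid structure, not just the underlying space. Even in the $\VB$-Day-convolution setting, knowing $D^{a,b}(M)\simeq \Sigma^{a+b+1}T^{a,b}(M)$ gives no direct way to compute $BD^{a,b}(M)$ up to homotopy, because the monoid structure on $D^{a,b}$ does not descend to any evident structure on $T^{a,b}$. Your proposed ``identification of the appropriate bar construction on $T^{a,b}$ with $T^{a,b+1}$'' is exactly the content of the lemma you are trying to prove, so this is circular. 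You have correctly identified the crux---why the answer is $a+b+1$ and not $2(a+b)$---but the bar-construction route does not resolve it.

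The paper's proof is a direct, non-inductive argument that avoids this issue entirely. First, the diagonal $L^{a,b}_\bullet(M)$ has an extra degeneracy (insert $0$ at the bottom of every flag and every splitting simultaneously), so $L^{a,b}(M)$ is contractible and $D^{a,b}(M)\simeq \Sigma\,\mathring L^{a,b}(M)$. Then one filters $\mathring L^{a,b}(M)$ by subcomplexes $F_k$ in which the first $k$ join factors are required to lie in $T(M)$ or $ST(M)$ rather than in $\mathring L(M)$ or $S\mathring L(M)$. For each $k$ one writes $F_{k-1}=N_k\cup S_k$ where $N_k$ is the subcomplex whose $k$th factor satisfies ``$V_p\neq M$'' (resp.\ ``$M_p\neq 0$'') and $S_k$ the subcomplex whose $k$th factor satisfies ``$V_0\neq 0$'' (resp.\ ``$M_0\neq 0$''). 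Both hemispheres are cones (with cone points given by $V_\bullet\equiv 0$ or $V_\bullet\equiv M$, respectively) and their intersection is $F_k$, so $F_{k-1}\simeq \Sigma F_k$. Since $F_0=\mathring L^{a,b}(M)$ and $F_{a+b}=T^{a,b}(M)$, each factor contributes exactly one suspension, for a total of $a+b$, and the extra one from the first step gives $a+b+1$. This hemisphere decomposition is what replaces your missing inductive mechanism and makes the suspension count transparent.
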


\begin{proof}
Note that the diagonal $L^{a,b}_{\bullet}(M)$ admits two extra degeneracies. These are induced by the extra degeneracies of $L_\bullet(M)$ given by $$(V_0 \subseteq \dots \subseteq V_p ) \mapsto (0 \subseteq V_0 \subseteq \dots \subseteq V_p )   $$ and $$(V_0 \subseteq \dots \subseteq V_p ) \mapsto ( V_0 \subseteq \dots\subseteq V_p \subseteq M)   $$  and the extra degeneracies of $SL_\bullet(M)$ given by $$(M_0  , \dots , M_p ) \mapsto (0 , M_0 , \dots , M_p )   $$ and $$(M_0  , \dots , M_p ) \mapsto ( M_0 , \dots , M_p,0 ).   $$ Since $L^{a,b}(M)$ has an extra degeneracy, it is contractible (see e.g. Goerss--Jardine \cite[Lemma 5.1]{GoerssJardine}). Since $\mathring{L}^{a,b}(M) \m L^{a,b}(M)$ is a cofibration and $D^{a,b}(M)= L^{a,b}(M)/\mathring{L}^{a,b}(M)$, we conclude that $D^{a,b}(M) \simeq \Sigma (\mathring{L}^{a,b}(M))$.


There are inclusions of spaces $T(M) \hookrightarrow \mathring{L}(M)$ and $ST(M) \hookrightarrow S\mathring{L}(M)$. 
For $0 \leq k \leq (a+b)$, let $F_k$ be the subcomplex of $\mathring{L}^{a,b}(M)$ where the first $k$ $L(M)$ and $SL(M)$ factors are in $T(M)$ or $ST(M)$, i.e., 
for each $i\leq k$ the $i$th factor has the form 
$$(V_0 \subseteq \dots \subseteq V_p)  \text{ with }  V_0 \neq 0 \text{ and } V_p \neq M \text{ (but inclusions may not be strict)}$$
or 
$$(M_0,\dots,M_p) \text{ with }  M_0, M_p \neq 0  \text{ (but other terms $M_i$ may be 0)}.$$

 We will show that $F_{k-1}$ is homotopy equivalent to the suspension of $F_k$ by defining contractible subspaces $N_k$ and $S_k$ of $F_{k-1}$ that we view as the `northern' and `southern' hemispheres, which intersect in the `equator' $F_k$. Let $N_k$ be subcomplex of $F_{k-1}$ where the $k$th factor is of the form $$(V_0 \subseteq \dots \subseteq V_p) \text{ with } V_p \neq M $$ with $k \leq a$, and is of the form $$(M_0,\dots,M_p) \text{ with } M_p \neq 0$$ if $k>a$.  Let $S_k$ be subcomplex of $F_{k-1}$ where the $k$th factor is of the form $$(V_0 \subseteq \dots \subseteq V_p)  \text{ with } V_0 \neq 0 $$ if $k \leq a$, and is of the form $$(M_0,\dots,M_p) \text{ with } M_0 \neq 0$$ if $k>a$. Then $F_{a+b} \cong T^{a,b}(M)$, $F_0 \cong \mathring{L}^{a,b}(M)$,   $S_k \cap N_k \cong F_k$, and $S_k \cup N_k \cong F_{k-1}$. Note that $S_k$ and $N_k$ are contractible since they have cone points. Thus $F_k \simeq \Sigma F_{k+1}$. This implies $\mathring{L}^{a,b}(M) \simeq \Sigma^k T^{a,b}(M)$. Since $D^{a,b}(M) \simeq \Sigma (\mathring{L}^{a,b}(M))$, the claim follows. 
\end{proof}

Since the suspension of a homology equivalence gives a homotopy equivalence, \autoref{SplitvsNotSplit} and \autoref{suspend} imply the following. 

\begin{corollary} \label{SplitvsNotSplitD}
Let $a \geq 1$. Then the `forget the complement' map $D^{a,b}(M) \m D^{a+b,0}(M)$ is a homotopy equivalence. 
\end{corollary}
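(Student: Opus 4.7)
The plan is to deduce this corollary by combining \autoref{SplitvsNotSplit} (with empty simplex $\sigma = \varnothing$), \autoref{suspend}, and the Whitehead theorem. First I would verify, by inspecting the construction in the proof of \autoref{suspend}, that the forget-complement map $D^{a,b}(M) \to D^{a+b,0}(M)$ corresponds, under the equivalences $D^{a,b}(M) \simeq \Sigma^{a+b+1} T^{a,b}(M)$ and $D^{a+b,0}(M) \simeq \Sigma^{a+b+1} T^{a+b,0}(M)$ produced there, to the $(a+b+1)$-fold suspension of the forget-complement map $T^{a,b}(M) \to T^{a+b,0}(M)$ on the underlying higher Tits buildings.

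Next, I would apply \autoref{SplitvsNotSplit} with $\sigma = \varnothing$ to conclude that the map $T^{a,b}(M) \to T^{a+b,0}(M)$ is a homology equivalence; this is the only place the hypothesis $a \geq 1$ enters. Since both source and target are CW complexes, the $(a+b+1)$-fold suspension is again a homology equivalence of CW complexes, and both suspensions are simply connected (indeed, $(a+b)$-connected) because $a+b+1 \geq 2$. The Whitehead theorem then upgrades this homology equivalence to a weak equivalence, which, as everything is a CW complex, is an honest homotopy equivalence.

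The only step that requires care is the compatibility check in the first paragraph. However, this is essentially formal: both sides of the equivalence in \autoref{suspend} are obtained by the same sequence of hemisphere collapses $F_{k-1} = N_k \cup S_k$ applied to the join factors, and the forget-complement map commutes with these collapses because forgetting the complement on an $SL$-factor is compatible with restricting to subsimplicial sets where the first or last term is nonzero. Thus the main obstacle is really just bookkeeping, and no new geometric input beyond \autoref{SplitvsNotSplit} and \autoref{suspend} is needed.
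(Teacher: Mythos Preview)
Your proposal is correct and follows essentially the same route as the paper: the paper's proof is the one-liner ``since the suspension of a homology equivalence gives a homotopy equivalence, \autoref{SplitvsNotSplit} and \autoref{suspend} imply the following,'' which is exactly the argument you outline. You are more careful in that you explicitly flag the compatibility of the equivalences of \autoref{suspend} with the forget-complement maps, a point the paper leaves implicit; your observation that this is formal bookkeeping is correct.
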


We now prove \autoref{mainLemma} which states that $B D^{k,0}(M) \simeq D^{k+1,0}(M)$.

\begin{proof}[Proof of \autoref{mainLemma}]  By \autoref{Bsplit}, there is a natural isomorphism of $(k+1)$-fold simplicial $\VB$-based sets $B_\bullet D^{k,0}_{\bullet,\dots,\bullet} \cong D^{k,1}_{\bullet,\dots,\bullet}$. \autoref{mainLemma} then follows from \autoref{SplitvsNotSplitD}. 
\end{proof}

\section{Rognes' connectivity conjecture} \label{SectionRognesConjecture}

In this section, we prove Rognes' connectivity conjecture in the case of fields, \autoref{ConnectivityThm}.

\subsection{Comparing $T^{k,0}_n(R)$ and ${\CB}_n(R)$}

In this subsection, we describe maps between $T^{k,0}_n(R)$ for different values of $k$ and also a map to ${\CB}_n(R)$.

\begin{definition} \label{DefnInducedMap}
Given an injection $f\colon \{1,\dots,k\} \m \{1,\dots,j\}$, let $f \colon  T_n^{k,0}(R) \m T_n^{j,0}(R)$ denote the simplicial embedding defined on vertices by $$\underbrace{\emptyset * \dots * \emptyset}_{i-1} * V * \emptyset* \dots * \emptyset \longmapsto \underbrace{\emptyset * \dots * \emptyset}_{f(i)-1} * V * \emptyset* \dots * \emptyset.$$
That is, $f$ maps a vertex in the $i$th join factor to a vertex in the $f(i)$th join factor.  Let $\iota \colon T_n^{k,0}(R) \m T_n^{k+1,0}(R) $ be the map induced by the standard inclusion $\{1,\dots,k\} \hookrightarrow \{1, \dots, k, k+1\}$. 
\end{definition}

Consider the colimit $ \colim_k T_n^{k,0}(R)$ defined by the simplicial embeddings $\iota$. Concretely, $\colim_k T_n^{k,0}(R)$ is isomorphic to the simplicial complex $\bigcup_k T_n^{k,0}(R)$. This is the subsimplicial complex of the infinite join 
$$T_n(R) * T_n(R) * \dots$$
where a simplex is the join of (finitely many) flags whose union has the common basis property. Since the maps $\iota$ are inclusions of simplicial complexes, this colimit agrees with the homotopy colimit.

\begin{definition}
Let $\pi \colon  T_n^{k,0}(R) \m {\CB}_n(R)$ be the simplicial map defined on vertices by 
 $$ \emptyset * \dots * \emptyset * V * \emptyset* \dots * \emptyset \longmapsto V$$   
 and defined on simplices by 
$$*_i \sigma_i  \; \longmapsto \;  \bigcup_i \sigma_i.$$   
\end{definition}

Note that $\pi$ commutes with $\iota$ so we obtain a map $$ \pi\colon    \colim_k T_n^{k,0}(R) \m {\CB}_n(R).$$ 


The following is a desuspension of Rognes' result \cite[Lemma 14.6]{Rog1}.

\begin{proposition} \label{colimTD}
The map $$ \pi\colon  \colim_k T_n^{k,0}(R) \m {\CB}_n(R) $$ is a homotopy equivalence.

\end{proposition}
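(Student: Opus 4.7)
I will prove the proposition by applying Quillen's Theorem A to the order-preserving map induced by $\pi$ on posets of nonempty simplices. Let $P$ and $Q$ denote the posets of nonempty simplices of $\colim_k T_n^{k,0}(R)$ and ${\CB}_n(R)$ respectively, each ordered by inclusion; their nerves are naturally homotopy equivalent to $\colim_k T_n^{k,0}(R)$ and ${\CB}_n(R)$. By Quillen's Theorem A, it suffices to show that for every simplex $\tau=\{V_0,\ldots,V_p\}$ of ${\CB}_n(R)$, the subposet $\pi_*^{-1}(Q_{\leq \tau})$ has contractible nerve. Concretely, this is the poset of nonempty simplices of the preimage $\pi^{-1}(\overline{\tau})$, the subcomplex of $\colim_k T_n^{k,0}(R)$ whose simplices map into the closed simplex $\overline{\tau}$ spanned by $\tau$.

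The heart of the argument is to identify $\pi^{-1}(\overline{\tau})$ with an infinite join. A simplex of $\pi^{-1}(\overline{\tau})\cap T_n^{k,0}(R)$ is a join of flags $\tau_1*\cdots*\tau_k$ in $T_n(R)$ with $\bigcup_i\tau_i\subseteq\tau$ and the union having the common basis property. Since $\tau$ already has the common basis property, every subset of $\tau$ inherits it by \autoref{CommonBasisClosure}, so the only surviving constraint is that each $\tau_i$ be a chain in the poset $\tau$ (under containment). Writing $\Lambda(\tau)$ for the order complex of $\tau$ --- the simplicial complex whose simplices are the chains in $\tau$ --- this identifies $\pi^{-1}(\overline{\tau})\cap T_n^{k,0}(R)$ with the $k$-fold join $\Lambda(\tau)^{*k}$. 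The map $\iota$ of \autoref{DefnInducedMap} corresponds under this identification to appending an empty join factor, so
\[
\pi^{-1}(\overline{\tau}) \;\cong\; \colim_k \Lambda(\tau)^{*k} \;=\; \Lambda(\tau)^{*\infty}.
\]

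Because $\tau$ is nonempty, so is $\Lambda(\tau)$, and standard connectivity estimates give that $\Lambda(\tau)^{*k}$ is $(k-2)$-connected. Passing to the sequential colimit along the cofibrations $\Lambda(\tau)^{*k}\hookrightarrow\Lambda(\tau)^{*(k+1)}$, we conclude that $\Lambda(\tau)^{*\infty}$ is weakly contractible, hence contractible as a CW complex. Quillen's Theorem A then yields that $\pi$ is a weak homotopy equivalence, and therefore a homotopy equivalence of CW complexes.

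The main step requiring care is the identification of the preimage with an infinite join. The crucial observation is that inside a single closed simplex $\overline{\tau}$ of ${\CB}_n(R)$, the common basis condition becomes automatic, so $\pi^{-1}(\overline{\tau})$ depends only on the internal poset structure of $\tau$. Once this combinatorial simplification is in place, the contractibility follows purely from the connectivity of iterated joins, and the rest is a formal application of Quillen's Theorem A.
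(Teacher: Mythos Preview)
Your proof is correct and follows essentially the same strategy as the paper: both invoke Quillen's Theorem~A and establish that the preimage of each closed simplex of $\CB_n(R)$ is contractible. The difference is only in how contractibility is shown. You identify $\pi^{-1}(\overline{\tau})$ structurally as the infinite join $\Lambda(\tau)^{*\infty}$ and appeal to the connectivity of iterated joins; the paper instead gives a direct compactness-and-cone-point argument---any map $S^d\to\pi^{-1}(\sigma)$ factors through some $T_n^{m,0}(R)$, and therefore lands in the star of the vertex $\emptyset*\cdots*\emptyset*V_0$ placed in the $(m{+}1)$st join factor. Both arguments are really the same observation, namely that vertices living in later join factors cone off anything supported in earlier factors, so the infinite join is weakly contractible. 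Your version has the mild advantage of making the combinatorial structure of the preimage explicit; the paper's version is shorter.
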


\begin{proof}
Let  $\sigma=[V_0,\dots,V_p]$ in ${\CB}_n(R)$ be a simplex of ${\CB}_n(R)$. It suffices to show $\pi^{-1}(\sigma)$ is weakly contractible; see Quillen \cite[Theorem A and the Example that follows]{QuillenKTheory1}. Let $g\colon S^d \m \pi^{-1}(\sigma)$ be a simplicial map from some simplicial structure on the $d$-sphere $S^d$. Since $S^d$ is compact, the image of $g$ must be contained in the image of $\iota\colon  T^{m,0}_n(R) \m \colim_k T_n^{k,0}(R)$ for some $m$. The map $g$ has image in the star in $\pi^{-1}(\sigma)$ of $$\underbrace{\emptyset * \dots * \emptyset}_{m} * V_0 * \emptyset* \dots $$ and hence $g$ is nullhomotopic.
\end{proof}

\subsection{The fundamental group of $T^{k,0}_n(F)$ and ${\CB}_n(F)$}

Let $F$ be a field. Our goal is to show $T^{k,0}_n(F)$ and ${\CB}_n(F)$ are simply connected if $k \geq 2$ and $n \geq 3$. The following is implicit in Galatius--Kupers--Randal-Williams \cite{e2cellsIV}.

\begin{lemma} \label{lemJoin}
For $F$ a field, the inclusion $T^{2,0}_n(F) \m T_n(F) * T_n(F)$ is an isomorphism. 
\end{lemma}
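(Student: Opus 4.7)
The plan is to show the inclusion is surjective on simplices; injectivity is automatic since $T^{2,0}_n(F)$ is defined as a subcomplex of the join in \autoref{DefnTab}. A simplex of $T_n(F) * T_n(F)$ has the form $\sigma_1 * \sigma_2$ where $\sigma_1 = (V_0 \subsetneq \cdots \subsetneq V_p)$ and $\sigma_2 = (W_0 \subsetneq \cdots \subsetneq W_q)$ are (possibly empty) flags of proper nonzero subspaces of $F^n$. By \autoref{DefnTab} this is a simplex of $T^{2,0}_n(F)$ precisely when the collection $\{V_0, \ldots, V_p, W_0, \ldots, W_q\}$ has the common basis property, so that is what I need to verify.

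First I would record the elementary fact that over a field any two proper nonzero subspaces $U, W \subseteq F^n$ admit a common basis: pick a basis of $U \cap W$, extend it separately to bases of $U$ and of $W$, observe by the rank formula that the union has size $\dim(U + W)$ and hence is a basis of $U + W$, and then extend to a basis of $F^n$.

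Next I would invoke \autoref{BigFlagLemma} twice. Applied to the singleton $\{W_j\}$ together with the flag $V_0 \subsetneq \cdots \subsetneq V_p$, the pair statement above verifies the hypothesis that $\{W_j, V_i\}$ is compatible for every $i$, and the lemma concludes that $\{W_j, V_0, \ldots, V_p\}$ has the common basis property for each $j$. A second application, this time to the collection $\{V_0, \ldots, V_p\}$ and the flag $W_0 \subsetneq \cdots \subsetneq W_q$, upgrades this to the common basis property of the full union $\{V_0, \ldots, V_p, W_0, \ldots, W_q\}$, which is what was required.

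There is essentially no obstacle here; the lemma is just the crystallization of the classical building-theoretic statement that any two chambers in the Tits building of $F^n$ are contained in a common apartment. The crucial step specific to fields is the compatibility of any pair of subspaces, which fails over general PIDs as illustrated by \autoref{ExampleIncompatibleLines}, and this is why \autoref{lemJoin} (and consequently \autoref{ConnectivityThm}) is stated for fields rather than arbitrary PIDs.
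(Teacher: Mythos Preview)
Your proof is correct and follows precisely the route the paper mentions as an alternative: the paper's own proof simply cites the classical building-theoretic fact (Abramenko--Brown) that any two flags lie in a common apartment, but explicitly remarks that the result ``could also be proved inductively using our \autoref{BigFlagLemma}.'' Your two-step application of \autoref{BigFlagLemma}, bootstrapped from the elementary compatibility of any pair of subspaces over a field, is exactly that inductive argument carried out in full.
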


\begin{proof}
This is the classical result that the union of any pair of flags admits a common basis. See (for example) Abramenko--Brown \cite[Section 4.3]{AbramenkoBrown}, where this result is proved in order to establish that the Tits building is a combinatorial building.  The result could also be proved inductively using our \autoref{BigFlagLemma}. 
\end{proof}

We note that the map of \autoref{lemJoin} is not an isomorphism for general PIDs; consider (for example) the pair of lines in $\Z^2$ from \autoref{ExampleIncompatibleLines}. This lemma is the one point in our paper we must assume that we are working over a field. 

We will combine  \autoref{lemJoin} with the celebrated Solomon--Tits theorem, to bound the connectivity of $T^{2,0}_n(F)$.  We note that the Solomon--Tits theorem holds for general PIDs $R$. In fact, when $R$ is a PID and $F=\mathrm{Frac}(R)$ is its field of fractions,  there are inverse isomorphisms
\begin{align*}  T_n(R) & \overset{\cong}{\longrightarrow} T_n(F) \\ 
 M & \longmapsto \mathrm{span}_F(M) \\ 
 V \cap R^n &  \longmapsfrom V 
\end{align*}

\begin{theorem}[Solomon--Tits Theorem] \label{SolomonTits} Let $R$ be a PID. Then $T_n(R)$ is $(n-3)$-connected. 
\end{theorem}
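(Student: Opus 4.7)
The plan is a two-step argument: reduce to the case of a field, then invoke the classical Solomon--Tits sphericity theorem, which I would prove via a Morse-theoretic apartment argument.

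The reduction to a field goes as follows. Let $F = \mathrm{Frac}(R)$. The assignments $M \mapsto FM$ and $V \mapsto V \cap R^n$ are mutually inverse order-preserving bijections between proper nonzero summands of $R^n$ and proper nonzero $F$-subspaces of $F^n$: the inverse lands in summands because $R^n/(V \cap R^n)$ embeds in the torsion-free $R$-module $F^n/V$, so \autoref{LemmaSplit} applies. This gives a simplicial isomorphism $T_n(R) \cong T_n(F)$, reducing the theorem to the case of a field.

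For $F$ a field, I would fix a basis $e_1, \ldots, e_n$ of $F^n$ and let $\Sigma \subset T_n(F)$ denote the associated \emph{apartment}: the full subcomplex on the coordinate subspaces $\langle e_i : i \in S\rangle$ for $\emptyset \neq S \subsetneq [n]$. This is a triangulation of $S^{n-2}$---namely the order complex of the boolean lattice $2^{[n]}$ with top and bottom removed---and is therefore $(n-3)$-connected. The goal becomes to show that the inclusion $\Sigma \hookrightarrow T_n(F)$ is $(n-3)$-connected. Equivalently, Solomon--Tits asserts $T_n(F) \simeq \bigvee S^{n-2}$, with one wedge summand for $\Sigma$ and one for each chamber of $T_n(F)$ outside $\Sigma$.

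To carry this out, I would enumerate the chambers (maximal flags) $C$ of $T_n(F)$ not contained in $\Sigma$ in order of gallery distance from a fixed base chamber $C_0 \in \Sigma$, and attach them one at a time using \autoref{Morse}. The key combinatorial fact (``foldability'' of the building) is that for each new chamber $C$, a nonempty portion of $\partial C$ already lies in the previously constructed subcomplex and forms a disk of dimension $n-3$; attaching $C$ then either fills this disk in (leaving the homotopy type unchanged) or caps off a new $(n-2)$-sphere. In either case, the connectivity never drops below $(n-3)$. The main obstacle is setting up the Morse function precisely---formalized via the classical retraction $\rho\colon T_n(F) \to \Sigma$ centered at $C_0$---and verifying via the flag-compatibility results of \autoref{SectionAlgebraicFoundations}, in particular \autoref{BigFlagLemma}, that the descending links are full subcomplexes at every stage. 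Given the length of such a setup and the fact that this is a very classical result, in practice I would simply cite Solomon--Tits (or Abramenko--Brown) for the field case rather than reprove it.
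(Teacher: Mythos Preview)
Your proposal is correct and matches the paper's treatment: the paper does not prove this theorem either, but states it as the classical Solomon--Tits theorem, noting (in the paragraph immediately preceding the statement) exactly the PID-to-field reduction via $M \mapsto \mathrm{span}_F(M)$ and $V \mapsto V \cap R^n$ that you give. Your additional sketch of the apartment/Morse argument is reasonable but unnecessary here, and your invocation of \autoref{BigFlagLemma} is slightly off---that lemma is about the common basis property relative to a fixed collection of submodules, not about the structure of descending links in the ordinary building---but since you (and the paper) ultimately defer to a citation, this does not affect correctness.
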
 

 \autoref{lemJoin} and the Solomon--Tits Theorem (\autoref{SolomonTits})  imply the following. 

\begin{corollary}\label{CorT2Conn}
For $F$ a field, $T^{2,0}_n(F)$ is $(2n-4)$-connected.
\end{corollary}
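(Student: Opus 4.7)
The plan is to combine the two cited results, \autoref{lemJoin} and the Solomon--Tits Theorem (\autoref{SolomonTits}), with the standard connectivity formula for joins.

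First, I would invoke \autoref{lemJoin} to replace $T^{2,0}_n(F)$ with the full join $T_n(F) * T_n(F)$; this identification is precisely where the hypothesis that $F$ is a field enters (it fails for general PIDs by \autoref{ExampleIncompatibleLines}). Next, I would invoke the Solomon--Tits Theorem (\autoref{SolomonTits}) to conclude that each factor $T_n(F)$ is $(n-3)$-connected. Finally, I would apply the standard fact that if $X$ is $p$-connected and $Y$ is $q$-connected, then the join $X * Y$ is $(p+q+2)$-connected. With $p = q = n-3$, this gives connectivity $2(n-3)+2 = 2n-4$, as desired.

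There is essentially no obstacle here; every ingredient is already in place. The only tiny subtlety is keeping track of the base case $n=2$: the Solomon--Tits bound gives $T_2(F)$ is $(-1)$-connected (nonempty), and the join of two nonempty spaces is $0$-connected, matching $2n-4 = 0$. For $n=1$ the complex $T_1(F)$ is empty, which matches the vacuous bound ``$(-2)$-connected''. So the formula is correct in all cases.
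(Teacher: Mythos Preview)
Your proposal is correct and matches the paper's approach exactly: the paper simply states that \autoref{lemJoin} and the Solomon--Tits Theorem imply the corollary, and the only unstated ingredient is the join connectivity formula you supply. Your handling of the small cases $n=1,2$ is also correct.
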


\begin{proposition} \label{badness} 
Let $R$ be a PID such that  $T^{2,0}_n(R)$ is $(2n-4)$-connected. For $k \geq 2$ and $n \geq 3$, $T^{k,0}_n(R)$ and ${\CB}_n(R)$ are $1$-connected.  
\end{proposition}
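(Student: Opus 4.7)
My plan is to prove the stronger statement that $T^{k,0}_n(R)$ is in fact $(2n-4)$-connected for every $k\ge 2$, by induction on $k$, using the bar construction machinery developed earlier in the paper. For $n\ge 3$ one has $2n-4\ge 2$, which gives the claimed $1$-connectedness; the conclusion for ${\CB}_n(R)$ will then follow from the colimit description of \autoref{colimTD}.

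For the induction, I take the base case $k=2$ directly from the hypothesis. For the inductive step, suppose $T^{k,0}_n(R)$ is $(2n-4)$-connected for all $n$. By \autoref{suspend}, the $\VB$-based space $D^k=D^{k,0}$ satisfies $D^k(R^n)\simeq\Sigma^{k+1}T^{k,0}_n(R)$, so $D^k(R^n)$ is $(2n+k-3)$-connected. Writing this as $\alpha n+\beta$ with $\alpha=2$ and $\beta=k-3\ge-1\ge-2$, I want to apply \autoref{Bconn}. By \autoref{GKRW6.6}, $D^k$ is a commutative monoid in $\VB$-based spaces; it is augmented and connected in the sense of \autoref{DefnAugmented} because $D^k(0)=S^0$ is the unit in $\VB$-degree $0$. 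Granting properness of $BD^k_\bullet$, \autoref{Bconn} then yields that $BD^k(R^n)$ is $(2n+k-2)$-connected. But by \autoref{mainLemma}, $BD^k(R^n)\simeq D^{k+1}(R^n)$, and desuspending $k+2$ times via \autoref{suspend} gives that $T^{k+1,0}_n(R)$ is $(2n-4)$-connected, completing the induction.

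For ${\CB}_n(R)$, I invoke \autoref{colimTD} to identify ${\CB}_n(R)\simeq\colim_k T^{k,0}_n(R)$, where the colimit is along the cellular inclusions $\iota$ of \autoref{DefnInducedMap}. Since any continuous map from $S^1$ or $D^2$ into the colimit has compact image and hence factors through some $T^{k,0}_n(R)$, and since this factor is simply connected by what was shown above, ${\CB}_n(R)$ is also $1$-connected.

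The main technical point to check is the properness hypothesis in \autoref{Bconn}, namely that $(BD^k_p(R^n),\,sBD^k_p(R^n))$ is a strong NDR-pair for each $p$ and $n$. This should follow from the cellular structure of the multi-simplicial bar construction: level-wise, $BD^k_p(R^n)$ is a wedge of smash products of realizations of the multi-simplicial $\VB$-based sets $D^{k,0}_{\bullet,\dots,\bullet}$, and the degeneracy maps (which insert the unit $S^0$) identify the latching object with a CW subcomplex; the pair is then an NDR-pair by standard arguments for such bar constructions, as in May's treatment of iterated loop spaces.
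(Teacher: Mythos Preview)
Your argument has a genuine gap at the ``desuspending'' step. From $D^{k+1}(R^n)\simeq\Sigma^{k+2}T^{k+1,0}_n(R)$ being $(2n+k-2)$-connected you conclude that $T^{k+1,0}_n(R)$ is $(2n-4)$-connected, but connectivity does not desuspend: if $X$ is any acyclic space with nontrivial fundamental group, then $\Sigma X$ is simply connected and acyclic, hence contractible, yet $X$ is not $1$-connected. All you can extract from the equivalence with a suspension is a reduced-homology vanishing statement $\widetilde H_i(T^{k+1,0}_n(R))=0$ for $i\le 2n-4$. To promote this to actual $(2n-4)$-connectivity via Hurewicz you would first need to know that $T^{k+1,0}_n(R)$ is simply connected, which is precisely the content of the proposition you are trying to prove. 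So the induction is circular.

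This is exactly why the paper separates the two ingredients: \autoref{DRognes} carries out your bar-construction induction at the level of the spaces $D^{k,0}_n(R)$ (where suspension causes no trouble), obtaining the homology vanishing for $T^{k,0}_n(R)$; the present proposition supplies the missing $\pi_1$ input by a direct combinatorial ``badness'' argument, homotoping an arbitrary loop in $T^{k,0}_n(R)$ into the image of a single $T^{2,0}_n(R)$ factor, where it dies by hypothesis. Only after both pieces are in hand does \autoref{maingeneral} invoke Hurewicz to conclude the full connectivity. Your proposal conflates these two steps; the part that actually addresses the proposition, namely establishing simple connectivity by hand, is absent.
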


By \autoref{CorT2Conn}, the proposition holds whenever $R$ is a field. 

\begin{proof}[Proof of \autoref{badness}]
By \autoref{colimTD}, it suffices to prove the claim for  $T^{k,0}_n(R)$.  Fix $k \geq 2$ and $n \geq 3$. By hypothesis, $T^{2,0}_n(R)$ has connectivity at least $(2n-4) \geq 2$.  Given any two vertices in $T^{k,0}_n(R)$, these two vertices are contained in a subspace isomorphic to $T^{2,0}_n(R)$, hence are connected by a path. Our goal is to show that $T^{k,0}_n(R)$ is simply connected. 

Consider a simplicial map $$g \colon  X \m T^{k,0}_n(R)$$ with $X$ some simplicial complex structure on $S^1$. If we can show that $g$ is homotopic to a map that factors through $T^{2,0}_n(R)$, then by our assumption on $T^{2,0}_n(R)$ we can conclude that $g$ is nullhomotopic and hence that every path component of $T^{k,0}_n(R)$ is $1$-connected. We will prove this  by showing that $g$ is homotopic to a map that factors through $T^{1,0}_n(R)$.

Call a simplex of $X$ \emph{bad} if  none of its vertices map to the image of $T^{1,0}_n(R)$. We are done if we can homotope $f$ to have no bad simplices. Suppose $\sigma=[x_0,x_1]$ is a bad edge. Let $\iota_j$ be the map induced by the injection from $\{1\}$ to $\N$ with image $j$, in the sense of \autoref{DefnInducedMap}. Let $V_0, V_1$ be submodules of $R^n$ and $n_0, n_1$ be numbers with $$g(x_0)=\iota_{n_0}(V_0) \qquad \text{and} \qquad g(x_1)=\iota_{n_1}(V_1) .$$ Since $\sigma$ is bad, $n_0$ and $n_1$ are larger than $1$. Observe that $g(\sigma)$ is contained in $\Link_{T^{k,0}_n(R)}(\iota_1(V_0))$. Let $X'$ be the subdivision of $X$ with a new vertex $t$ in the middle of $\sigma$. Let $g'\colon X' \m T^{k,0}_n(R)$ be the map that sends $t$ to $\iota_1(V_0)$ and that agrees with $g$ elsewhere. Since $[\iota_1(V_0),g(x_0),g(x_1)]$ is a simplex of $T^{k,0}_n(R)$, $g$ is homotopic to $g'$. By iterating this procedure, we can find a homotopic map $$g''\colon X'' \m T^{k,0}_n(R)$$ with $X''$ a new simplicial structure on $S^1$ and such that $g''$ has no bad $1$-simplices.

Finally, we will describe a procedure for removing isolated bad vertices. Let $y_1 \in X''$ be a bad vertex and let $g''(y_1)=\iota_j(V_1)$. Note that $j \geq 2$. Let $y_0$ and $y_2$ be the vertices adjacent to $y_1$ in $X''$. There are submodules $V_0$ and $V_2$ with  $g''(y_0)=\iota_1(V_0)$ and $g''(y_2)=\iota_1(V_2)$. Since $n \geq 3$, the complex $T_n(R)$ is connected. Thus there is a simplicial structure $Y$ on $[0,1]$ and a simplicial map $h\colon Y \m T_n(R)$ with $h(0)=V_0$ and $h(1)=V_2$. Let $X'''$ be the new simplicial structure on $S^1$ obtained from $X''$ by replacing $[y_0,y_1] \cup [y_1,y_2]$ with $Y$.  Define $g'''\colon X''' \m T^{k,0}_n(R)$  to agree with $g''$ on vertices of $X''$ excluding $y_1$,  and let $g''$ equal $\iota_1 \circ h$ on $Y$. Consider the loop obtained by concatenating the paths $g''\big|_{[y_0, y_1]\cup[y_1, y_2]}$ and $\iota_1 \circ h$. Its image is contained in a copy of $T^{2,0}_n(R)$, hence it is nullhomotopic by hypothesis. We infer that $g''$ and $g'''$ are homotopic.   We have removed one bad vertex. Iterating this procedure produces the desired homotopy. \end{proof}

\subsection{High connectivity of ${\CB}_n(F)$}

We first prove $D_n^{k,0}(R)$ is highly connected whenever $T^2_n(R)$ is.  The result for fields follows from \autoref{CorT2Conn}.

\begin{proposition} \label{DRognes}
Let $R$ be a PID such that $T^2_n(R)$ is $(2n-4)$-connected.  For $k \geq 2$, $D_n^{k,0}(R)$ is $(2n+k-3)$-connected.  In particular, when $F$ is a field, $D_n^{k,0}(F)$ is $(2n+k-3)$-connected for all $k \geq 2$. 
\end{proposition}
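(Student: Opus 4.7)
The plan is to prove this by induction on $k$, using the suspension identification from \autoref{suspend} as the base case and the iterated bar construction identification from \autoref{mainLemma} for the inductive step.

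For the base case $k=2$, \autoref{suspend} gives a homotopy equivalence
\[
D^{2,0}(R^n) \simeq \Sigma^{3} T^{2,0}_n(R).
\]
Since $T^{2,0}_n(R)$ is $(2n-4)$-connected by hypothesis, taking a triple suspension raises connectivity by $3$, yielding that $D^{2,0}_n(R)$ is $(2n-1)$-connected. This is exactly the claimed bound $(2n+k-3)$ for $k=2$.

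For the inductive step, suppose the result holds for some $k \geq 2$, that is, $D^{k,0}_n(R)$ is $(2n+k-3)$-connected for every $n$. By \autoref{mainLemma} we have an equivalence $D^{k+1,0}(R) \simeq B D^{k,0}(R)$, so it suffices to bound the connectivity of the bar construction. I will apply \autoref{Bconn} with $\alpha = 2$ and $\beta = k-3$; note that $\beta \geq -1 \geq -2$ since $k \geq 2$, so the hypothesis on $\beta$ is satisfied. The conclusion of \autoref{Bconn} is that $B D^{k,0}(R^n)$ is $(\alpha n + \beta + 1) = (2n + k - 2) = (2n + (k+1) - 3)$-connected, which is what we want.

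To invoke \autoref{Bconn} I need to verify its two standing hypotheses: that $D^{k,0}$ is a connected augmented monoid object in $\VB$-based spaces, and that the simplicial space $BD^{k,0}_{\bullet}(R^n)$ is proper. The first follows from \autoref{GKRW6.6}, which gives $D^{k,0}_{\bullet,\dots,\bullet}$ the structure of a commutative monoid in $(a{+}b)$-fold simplicial $\VB$-based sets (hence the realization is a commutative monoid object in $\VB$-based spaces), together with the observation in the excerpt that $D^{k,0}_{0,\dots,0}(0) = S^0$ and $D^{k,0}_{\bullet,\dots,\bullet}(M)$ is basepoint-only for $M \neq 0$ at multisimplicial degree zero, so the canonical augmentation $D^{k,0} \to {\bf 1}$ is an isomorphism in $\VB$-degree $0$. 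Properness of the bar simplicial space is the main technical nuisance; the key point is that $D^{k,0}(R^n)$ is the realization of a $\VB$-based \emph{simplicial set}, so the basepoint inclusion is automatically a cofibration and the latching maps of $BD^{k,0}_{\bullet}(R^n)$ are cofibrations of CW pairs. I expect this verification to be the only nontrivial step, but it is routine given the simplicial-set model we have for $D^{k,0}$. Once properness is in hand, the induction closes and the proposition follows; the final statement for fields $F$ is immediate from \autoref{CorT2Conn}, which supplies the hypothesis that $T^{2}_n(F) = T^{2,0}_n(F)$ is $(2n-4)$-connected.
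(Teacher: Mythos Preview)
Your proof is correct and follows essentially the same route as the paper's: induction on $k$, base case via \autoref{suspend}, inductive step via \autoref{Bconn}, with properness checked using the CW structure. The only cosmetic difference is that you invoke \autoref{mainLemma} directly for $D^{k+1,0}\simeq BD^{k,0}$, whereas the paper unpacks this into its two ingredients \autoref{Bsplit} and \autoref{SplitvsNotSplitD}.
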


\begin{proof}
We will prove the claim by induction on $k$. By \autoref{suspend} and the assumption on $T^2_n(R)$,  $$D_n^{2,0}(F) \simeq \Sigma^3 (T^2_n(R))$$   is $(2n-1)$-connected. This proves the base case. Assume we have proven that $D_n^{k-1,0}(R)$ is $(2n+k-4)$-connected. 

We will apply \autoref{Bconn} to show $BD_n^{k-1,0}(R)$ is $(2n+k-3)$-connected. To do this, we must verify that $BD_n^{k-1,0}(R)_{\bullet}$ is a proper simplicial space. This holds because the latching objects are CW subcomplexes hence form a strong NDR-pair. By \autoref{Bsplit}, $BD_n^{k-1,0}(R) \cong D_n^{k-1,1}(R)$. By \autoref{SplitvsNotSplitD},  $D_n^{k-1,1}(R) \m D_n^{k,0}(R)$ is a homotopy equivalence and so $D_n^{k,0}(R)$ is $(2n+k-3)$-connected. The claim follows by induction.
\end{proof}

We now prove the following which includes the statement of \autoref{ConnectivityThm}. The statement for fields follows from \autoref{CorT2Conn}. 

\begin{theorem}\label{maingeneral} Let $R$ be a PID.  Let $k \geq 2$. If $T^2_n(R)$ is $(2n-4)$-connected, then
the complexes $T_n^k(R)$ and ${\CB}_n(R)$ are $(2n-4)$-connected.
In particular, when $F$ is a field, $T_n^k(F)$ and ${\CB}_n(F)$ are $(2n-4)$-connected for $k \geq 2$. 
\end{theorem}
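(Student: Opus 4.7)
The plan is to chain together the results built up in the previous sections: \autoref{DRognes}, \autoref{suspend}, \autoref{badness}, and \autoref{colimTD}. Note that $T_n^k(R) = T_n^{k,0}(R)$ in the notation of \autoref{DefnTab}. By \autoref{DRognes}, the hypothesis that $T_n^2(R)$ is $(2n-4)$-connected implies that $D_n^{k,0}(R)$ is $(2n+k-3)$-connected for every $k \geq 2$. Combining this with the homotopy equivalence $D_n^{k,0}(R) \simeq \Sigma^{k+1} T_n^{k,0}(R)$ from \autoref{suspend} and the suspension isomorphism in reduced homology, we deduce that $T_n^{k,0}(R)$ is $(2n-4)$-acyclic.

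To upgrade acyclicity to connectivity, I would invoke \autoref{badness}: for $n \geq 3$ and $k \geq 2$, $T_n^{k,0}(R)$ is simply connected, so the Hurewicz theorem gives that $T_n^{k,0}(R)$ is $(2n-4)$-connected. The low-rank cases are handled directly: for $n=2$ the conclusion reduces to path-connectedness, which follows from $\tilde{H}_0(T_2^{k,0}(R))=0$ together with the non-emptiness of $T_2^{k,0}(R)$; for $n=1$ the module $R$ admits no proper nonzero summands, so $T_1^{k,0}(R)$ and ${\CB}_1(R)$ are empty and the conclusion $(-2)$-connected holds by convention.

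For ${\CB}_n(R)$, \autoref{colimTD} provides a homotopy equivalence ${\CB}_n(R) \simeq \colim_k T_n^{k,0}(R)$ along the simplicial inclusions $\iota$, so the uniform $(2n-4)$-connectivity of each $T_n^{k,0}(R)$ passes to the colimit. The main conceptual obstacle in this argument is the upgrade from homology-connectivity to homotopy-connectivity: the iterated bar-construction proof of \autoref{DRognes} only detects homology past $\pi_1$, and it is \autoref{badness} (itself relying on the fullness result \autoref{BigFlagLemma}) that supplies the simple connectivity needed to close the gap.
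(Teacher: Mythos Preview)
Your proof is correct and follows essentially the same approach as the paper: use \autoref{DRognes} and \autoref{suspend} for the homology vanishing, invoke \autoref{badness} and Hurewicz for simple connectivity when $n\geq 3$, handle small $n$ directly, and pass to the colimit via \autoref{colimTD}. One minor inaccuracy in your closing commentary: \autoref{badness} does not rely on \autoref{BigFlagLemma}; its proof is a direct bad-simplex argument using only the connectivity hypothesis on $T_n^{2,0}(R)$.
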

\begin{proof}
Note that the claim is vacuous for $n=1$ and $n=0$. Since $D_n^{k,0}(R)$ is $(2n+k-3)$-connected by \autoref{DRognes}   and $\Sigma^{k+1} T_n^{k,0}(R) \simeq D_n^{k,0}(R)$ by \autoref{suspend},  $$\widetilde H_i(T_n^{k,0}(R)) \cong 0 \qquad \text{ for $i \leq 2n-4$.}$$ This implies $T_2^k(R)$ is $0$-connected since reduced homology detects $0$-connectivity. Now consider the case $n \geq 3$. By  \autoref{badness}  the space $T_n^{k,0}(R)$ is simply-connected for $n \geq 3$ and $k \geq 2$ under the assumption that $T^2_n(R)$ is $(2n-4)$-connected. Hence the Hurewicz theorem implies $T_n^k(R)$ is $(2n-4)$-connected. Since ${\CB}_n(R)$ is the homotopy colimit over $k$ of the spaces $T_n^k(R)$, we deduce that ${\CB}_n(R)$ is also $(2n-4)$-connected.

By \autoref{CorT2Conn}, the hypothesis that $T^2_n(F)$ is $(2n-4)$-connected holds whenever $F$ is a field. 
\end{proof}

\begin{remark}
Using Galatius--Kupers--Randal-Williams \cite[Theorem 7.1 (ii)]{e2cellsIV} instead of \autoref{CorT2Conn}, it is possible to prove a version of \autoref{maingeneral} for fields replaced with semi-local PIDs with infinite residue fields (e.g. power series rings of infinite fields in one variable).

\end{remark}

\section{The Koszul dual of the Steinberg monoid} \label{SectionKoszulDual}

We now review the definition of the Steinberg monoid and compute its Koszul dual.

\begin{definition}
For $R$ a PID, let $\St(R)$ be the $\VB(R)$-ring with $\St(R)(M)=\widetilde H_{\rank(M)}(D^{1,0}(M))$ and with ring structure induced by the monoid structure on $D^{1,0}$. 
\end{definition} 

The ring structure on $\St(R)$ was originally introduced by Miller--Nagpal--Patzt \cite[Section 2.3]{MNP}. The definition given here is due to Galatius--Kupers--Randal-Williams \cite[Lemma 6.6]{e2cellsIV} and agrees with that of Miller--Nagpal--Patzt  by Galatius--Kupers--Randal-Williams \cite[Remark 6.7]{e2cellsIV}.

The monoid $\St(R)$ is augmented and connected in the sense of \autoref{DefnAugmented} since $\St(R)_0 \cong \Z$. Recall from \autoref{DefnKoszul} that, given an augmented $\VB$-ring $A$, we say $A$ is \emph{Koszul} if $\Tor_i^{A}(\Z,\Z)_n \cong 0$ for $i \neq n$, and its Koszul dual is the  $\VB$-module $$M \longmapsto \Tor_{\rank M}^{A}(\Z,\Z)_M.$$
It was shown by Miller--Nagpal--Patzt \cite[Theorem 1.4]{MNP} that $\St(F)$ is Koszul if $F$ is a field. We give a new proof of this and compute its Koszul dual. To do this, we first give a description of $\Tor_i^{\St(R)}(\Z,\Z)_n$ for any PID using the Solomon--Tits theorem. 
\begin{lemma} \label{TorD}
For $R$ a PID, there is a natural isomorphism $\Tor_i^{\St(R)}(\Z,\Z)_n \cong \widetilde H_{i+n}(D^{2,0}_n(R))$.    
\end{lemma}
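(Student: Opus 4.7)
The strategy is to identify both sides by means of a chain-level model of $\St(R)$ built from the reduced chains of $D^{1,0}$. By \autoref{PropositionTorViaBarConstruction} we have $\Tor^{\St(R)}_i(\Z,\Z)_n \cong H_i(B\St(R))_n$, so the plan is to identify $H_i(B\St(R))_n$ with $\widetilde{H}_{i+n}(D^{2,0}_n(R))$ by running a spectral sequence on a double complex whose total homology computes both.

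First, I would observe that $\widetilde{C}_*(D^{1,0})$ is a $\VB$-ring in chain complexes, with multiplication induced from the monoid structure on $D^{1,0}$ via Eilenberg--Zilber. By \autoref{suspend} we have $D^{1,0}_n(R) \simeq \Sigma^2 T_n(R)$, and \autoref{SolomonTits} combined with the fact that $T_n(R)$ has simplicial dimension $n-2$ implies that $\widetilde{C}_*(D^{1,0}_n(R))$ is a complex of free abelian groups whose reduced homology is concentrated in degree $n$, where it equals $\St_n(R)$. The induced ring structure on homology is the given one on $\St(R)$.

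Next, form the two-sided bar construction $B(\widetilde{C}_*(D^{1,0}))$ as a $\VB$-double complex and pass to its total complex. On the one hand, a standard Eilenberg--Zilber comparison, compatible with Day convolution (which unfolds as a direct sum of tensor products), identifies this total complex with the reduced chains of the realization of the $\VB$-simplicial space $B_\bullet D^{1,0}$; by \autoref{Bsplit} this realization is $D^{1,1}$, and by \autoref{SplitvsNotSplitD} the forgetful map $D^{1,1}_n(R) \to D^{2,0}_n(R)$ is a homotopy equivalence, so the total homology equals $\widetilde{H}_{*}(D^{2,0}_n(R))$. On the other hand, filtering the double complex by simplicial degree yields a spectral sequence whose $E^1$-page is computed by K\"unneth on each $(\widetilde{C}_*(D^{1,0})^{\otimes_{\VB} p})_n$; by freeness and concentration of homology, the only nonzero row is $E^1_{p,n} = (\St(R)^{\otimes_{\VB} p})_n$, and $d^1$ matches the bar differential of $\St(R)$ in internal grading $n$ (since the multiplication on $\widetilde{C}_*(D^{1,0})$ descends to the multiplication on $\St(R)$ by construction). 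The spectral sequence therefore degenerates at $E^2$ and gives $H_{i+n}(\mathrm{Tot}) \cong H_i(B\St(R))_n \cong \Tor^{\St(R)}_i(\Z,\Z)_n$.

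Chaining the two computations together gives the desired natural isomorphism $\Tor^{\St(R)}_i(\Z,\Z)_n \cong \widetilde{H}_{i+n}(D^{2,0}_n(R))$. The main technical obstacle I anticipate is making the Eilenberg--Zilber identification of the total complex of $B(\widetilde{C}_*(D^{1,0}))$ with the reduced chains of $|B_\bullet D^{1,0}|$ rigorous in the $\VB$-equivariant setting, and checking that the multiplication on $\widetilde{C}_*(D^{1,0})$ induces the same ring structure on $\St(R)$ that appears in the statement; both steps, however, are variations on standard machinery, so the creative content of the proof lies mainly in arranging the spectral sequence and invoking \autoref{Bsplit} and \autoref{SplitvsNotSplitD}.
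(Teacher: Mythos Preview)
Your proposal is correct and follows essentially the same approach as the paper's proof: both compute the total homology of the double complex $B(\widetilde{C}_*(D^{1,0}))$ two ways, using Solomon--Tits and \autoref{suspend} to collapse the spectral sequence (the paper calls it a hypertor spectral sequence and states it from $E^2$, you filter by simplicial degree and state it from $E^1$, but these are the same spectral sequence), and using \autoref{Bsplit}, \autoref{SplitvsNotSplitD}, and Eilenberg--Zilber to identify the abutment with $\widetilde{H}_*(D^{2,0}_n(R))$. The technical caveats you flag---the equivariant Eilenberg--Zilber comparison and compatibility of ring structures---are exactly the points the paper also passes over quickly.
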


\begin{proof}
Recall from \autoref{Bsplit} and \autoref{SplitvsNotSplitD}, $$B D^{1,0}_n(R) \cong D^{1,1}_n(R) \simeq D^{2,0}_n(R).$$  Since $D^{1,0}_n(R) \simeq \Sigma^2 T^{1,0}_n(R)$ by \autoref{suspend}, the Solomon--Tits theorem (\autoref{SolomonTits}) implies that $\widetilde H_i ( D^{1,0}(R) ) =0$ for $i<n$ and by definition $\widetilde H_i ( D^{1,0}_n(R) ) = \St(R)_n$ for $i=n$. 

Consider the hypertor spectral sequence where $(E^2_{p,q})_n$ is the homological degree-$q$ part of $\Tor_p^{\widetilde{H}_*(D^{1,0}_n(R))}(\Z, \Z) $. The $d_r$ differential is a map $$d_r \colon \left(E^r_{p,q}\right)_n \to \left(E^r_{p-r, q-1+r}\right)_n.$$  The spectral sequence converges to $\Tor_{p+q}^{\widetilde{C}_*(D^{1,0}_n(R))}(\Z, \Z)$.  Since the homology of $\widetilde H_i ( D^{1,0}(R) )$ is concentrated in degree $i=n$ (where it is $\St(R)_n$), this spectral sequence collapses.  We see that $$\Tor_i^{\St(R)}(\Z,\Z)_n \cong \Tor_{n+i}^{\widetilde C_*(D^{1,0}_n(R)) }(\Z,\Z)_n.$$ It now suffices to show that  
$$\Tor_{j}^{\widetilde C_*( A) }(\Z,\Z)_n \cong {H}_j( B(\widetilde{C}_*(A)) ) \cong \widetilde H_j(B A)$$ 
for $A$ a connected augmented monoid object in simplicial $\VB$-based spaces. 
The first isomorphism is a chain-level version of \autoref{PropositionTorViaBarConstruction}. The second isomorphism is a version of the Eilenberg--Zilber theorem. 
\end{proof}

The following theorem implies the Koszulness result of Miller--Nagpal--Patzt \cite[Theorem 1.4]{MNP}  and our \autoref{KD}, which states that for $F$ a field, $\Tor_n^{\St(F)}(\Z,\Z)_n \cong \St_n(F) \otimes \St_n(F)$.

\begin{theorem} Let $R$ be a PID. Let $n, i \geq 0$. There are isomorphisms of $\GL_n(R)$-representations
$$\Tor_i^{\St(R)}(\Z,\Z)_n \cong \widetilde{H}_{i+n-3}(T^{2}(R))$$ 
Thus, if $R$ is a PID such that $T^{2}(R)$ is $(2n-4)$-connected, then $\St(R)$ is Koszul. 
In particular, when $F$ is a field,  then $\St(F)$ is Koszul and there are isomorphisms of $\GL_n(F)$-representations
$$\Tor_i^{\St(F)}(\Z,\Z)_n \cong \left\{ \begin{array}{ll}  \St_n(F) \otimes \St_n(F), &  i=n \\ 0, & i \neq n. \end{array} \right. $$ 
\end{theorem}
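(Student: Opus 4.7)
The plan is to chain together Lemma~\ref{TorD} with the suspension equivalence from Lemma~\ref{suspend}, and then extract both the Koszul vanishing and the explicit identification for fields from the connectivity and dimensionality of $T^2_n(R)$.

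First I would invoke Lemma~\ref{TorD}, which gives the natural isomorphism $\Tor_i^{\St(R)}(\Z,\Z)_n \cong \widetilde H_{i+n}(D^{2,0}_n(R))$, and then apply Lemma~\ref{suspend} with $(a,b)=(2,0)$ to rewrite $D^{2,0}_n(R)\simeq \Sigma^{3}T^{2,0}_n(R)=\Sigma^{3}T^{2}_n(R)$. Composing these and shifting homological degree by three produces the first isomorphism of the theorem, $\Tor_i^{\St(R)}(\Z,\Z)_n\cong \widetilde H_{i+n-3}(T^2_n(R))$. All the maps in sight are $\GL_n(R)$-equivariant, since $\St(R)$, the $D^{a,b}$ construction, and the suspension identification are all functorial in $\VB(R)$.

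Next I would derive Koszulness from this isomorphism. Under the hypothesis that $T^2_n(R)$ is $(2n-4)$-connected, the reduced homology $\widetilde H_{i+n-3}(T^2_n(R))$ vanishes whenever $i+n-3\le 2n-4$, i.e.\ for $i<n$. For the upper bound, observe that $T^2_n(R)$ is by construction a subcomplex of $T_n(R)*T_n(R)$, which has dimension $2(n-2)+1=2n-3$; consequently $\widetilde H_{i+n-3}(T^2_n(R))=0$ whenever $i+n-3>2n-3$, i.e.\ for $i>n$. These two vanishings together give $\Tor_i^{\St(R)}(\Z,\Z)_n=0$ for $i\ne n$, which is exactly Koszulness.

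For $F$ a field, Corollary~\ref{CorT2Conn} supplies the required connectivity hypothesis, so $\St(F)$ is Koszul. To pin down the nonzero $\Tor$, I would use Lemma~\ref{lemJoin} to replace $T^2_n(F)$ by the full join $T_n(F)*T_n(F)$, and then invoke the Solomon--Tits theorem (\autoref{SolomonTits}), which says $\widetilde H_*(T_n(F))$ is concentrated in degree $n-2$ with value $\St_n(F)$. Since the join satisfies $\widetilde H_*(X*Y)\cong \widetilde H_{*-1}(X\wedge Y)$ and the Künneth formula applies (the Steinberg module is a free abelian group, being the top homology of a wedge of spheres), I obtain $\widetilde H_{2n-3}(T_n(F)*T_n(F))\cong \St_n(F)\otimes \St_n(F)$ as $\GL_n(F)$-representations under the diagonal action. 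This yields the explicit description $\Tor_n^{\St(F)}(\Z,\Z)_n\cong \St_n(F)\otimes \St_n(F)$.

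There is no real obstacle remaining at this stage: the heavy machinery (the hypertor spectral sequence collapse in Lemma~\ref{TorD}, the contractibility arguments underlying the suspension model in Lemma~\ref{suspend}, and the field-specific join identification of Lemma~\ref{lemJoin}) has already been assembled earlier in the paper. The only point that warrants care is ensuring that each isomorphism in the chain is $\GL_n$-equivariant, which follows because every construction involved is defined in the category $\Fun(\VB(R),\mathcal C)$.
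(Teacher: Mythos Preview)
Your proposal is correct and follows essentially the same route as the paper: invoke \autoref{TorD}, then \autoref{suspend}, then \autoref{lemJoin} for the field case. In fact you supply slightly more than the paper's proof does: the paper's argument jumps directly to the field computation and does not explicitly justify the intermediate Koszulness claim for a general PID satisfying the connectivity hypothesis, whereas you correctly fill this in with the dimension bound $\dim T^2_n(R)\le 2n-3$ to handle $i>n$.
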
 

\begin{proof}
By \autoref{TorD}, $$\Tor_i^{\St(R)}(\Z,\Z)_n \cong \widetilde H_{i+n}(D^{2,0}_n(R)).$$ By \autoref{suspend}, $$\widetilde H_{i+n}(D^{2,0}_n(R)) \cong \widetilde H_{i+n-3}(T^{2,0}(R)).$$  By \autoref{lemJoin}, when $F$ is a field,  $$\widetilde H_{i+n-3}(T^{2,0}(F))  \cong \left\{ \begin{array}{ll}  \St_n(F) \otimes \St_n(F), &  i=n \\ 0, & i \neq n. \end{array} \right.$$ 
\end{proof}


\bibliographystyle{amsalpha}
\bibliography{refs}

\providecommand{\bysame}{\leavevmode\hbox to3em{\hrulefill}\thinspace}
\providecommand{\MR}{\relax\ifhmode\unskip\space\fi MR }
\providecommand{\MRhref}[2]{%
  \href{http://www.ams.org/mathscinet-getitem?mr=#1}{#2}
}
\providecommand{\href}[2]{#2}
\begin{thebibliography}{GKRWb}

\bibitem[AB08]{AbramenkoBrown}
Peter Abramenko and Kenneth~S Brown, \emph{Buildings: theory and applications},
  vol. 248, Springer Science \& Business Media, 2008.

\bibitem[Bes08]{Bestvina-MorseTheory}
Mladen Bestvina, \emph{{PL} {M}orse theory}, Mathematical Communications
  \textbf{13} (2008), no.~2, 149--162.

\bibitem[Cha80]{Charney}
R.M. Charney, \emph{Homology stability for {${\rm GL}\sb{n}$} of a {D}edekind
  domain}, Invent. Math. \textbf{56} (1980), no.~1, 1--17. \MR{557579}

\bibitem[ERW19]{EbertRW}
Johannes Ebert and Oscar Randal-Williams, \emph{Semisimplicial spaces},
  Algebraic \& Geometric Topology \textbf{19} (2019), no.~4, 2099--2150.

\bibitem[GJ09]{GoerssJardine}
Paul~G. Goerss and John~F. Jardine, \emph{Simplicial homotopy theory}, Modern
  Birkh\"{a}user Classics, Birkh\"{a}user Verlag, Basel, 2009, Reprint of the
  1999 edition. \MR{2840650}

\bibitem[GKRWa]{e2cellsI}
S.~Galatius, A.~Kupers, and O.~Randal-Williams, \emph{Cellular
  ${E}_k$-algebras}, To appear in Astérisque,
  \href{https://arxiv.org/abs/1805.07184}{arXiv:1805.07184}.

\bibitem[GKRWb]{e2cellsIII}
\bysame, \emph{${E}_\infty$-cells and general linear groups of finite fields},
  To appear in Ann. Sci. Éc. Norm. Supér.,
  \href{https://arxiv.org/abs/1810.11931}{arXiv:1810.11931}.

\bibitem[GKRWc]{e2cellsIV}
\bysame, \emph{${E}_\infty$-cells and general linear groups of infinite
  fields}, Preprint, \href{https://arxiv.org/abs/2005.05620}{arXiv:2005.05620}.

\bibitem[Hep20]{Hepworth-Edge}
Richard Hepworth, \emph{On the edge of the stable range}, Mathematische Annalen
  \textbf{377} (2020), no.~1, 123--181.

\bibitem[HV17]{HatcherVogtmann-Tethers}
Allen Hatcher and Karen Vogtmann, \emph{Tethers and homology stability for
  surfaces}, Algebraic \& Geometric Topology \textbf{17} (2017), no.~3,
  1871--1916.

\bibitem[Kap54]{Kaplansky}
Irving Kaplansky, \emph{Infinite abelian groups}, no.~2, University of Michigan
  Press Ann Arbor, 1954.

\bibitem[KRY09]{KungRotaYan}
Joseph~PS Kung, Gian-Carlo Rota, and Catherine~H Yan, \emph{Combinatorics: the
  {R}ota way}, Cambridge University Press, 2009.

\bibitem[May72]{MayGeometryIteratedLoopSpaces}
J.~P. May, \emph{The geometry of iterated loop spaces}, Lecture Notes in
  Mathematics, vol. Vol. 271, Springer-Verlag, Berlin-New York, 1972.
  \MR{420610}

\bibitem[May75]{MayClassifyingSpaces}
J.~Peter May, \emph{Classifying spaces and fibrations}, Mem. Amer. Math. Soc.
  \textbf{1} (1975), xiii+98. \MR{370579}

\bibitem[MNP20]{MNP}
J.~Miller, R.~Nagpal, and P.~Patzt, \emph{Stability in the high-dimensional
  cohomology of congruence subgroups}, Compos. Math. \textbf{156} (2020),
  no.~4, 822--861. \MR{4079629}

\bibitem[MPP]{MillerPatztPetersen}
J.~Miller, P.~Patzt, and D.~Petersen, \emph{Representation stability, secondary
  stability, and polynomial functors}, Preprint,
  \href{https://arxiv.org/abs/1910.05574v1}{arXiv:1910.05574}.

\bibitem[MZZ]{MayZhangZou}
J~Peter May, Ruoqi Zhang, and Foling Zou, \emph{Operads, monoids, monads, and
  bar constructions}, Preprint,
  \href{https://arxiv.org/abs/2003.10934}{arXiv:2003.10934}.

\bibitem[PS17]{PutmanSam}
Andrew Putman and Steven~V. Sam, \emph{Representation stability and finite
  linear groups}, Duke Math. J. \textbf{166} (2017), no.~13, 2521--2598.
  \MR{3703435}

\bibitem[Qui78]{Quillen-Poset}
D.~Quillen, \emph{Homotopy properties of the poset of nontrivial p-subgroups of
  a group}, Advances in Mathematics \textbf{28} (1978), no.~2, 101--128.

\bibitem[Qui06]{QuillenKTheory1}
Daniel Quillen, \emph{Higher algebraic k-theory: I}, Higher K-Theories:
  Proceedings of the Conference held at the Seattle Research Center of the
  Battelle Memorial Institute, from August 28 to September 8, 1972, Springer,
  2006, pp.~85--147.

\bibitem[Rog]{Rognes96}
John Rognes, \emph{The weight and rank filtrations}, Preprint,
  \href{https://arxiv.org/abs/2110.12264}{arXiv:2110.12264}.

\bibitem[Rog92]{Rog1}
\bysame, \emph{A spectrum level rank filtration in algebraic {$K$}-theory},
  Topology \textbf{31} (1992), no.~4, 813--845. \MR{1191383}

\bibitem[Rog00]{RognesK4}
J.~Rognes, \emph{{$K_4({\bf Z})$} is the trivial group}, Topology \textbf{39}
  (2000), no.~2, 267--281. \MR{1722028}

\bibitem[Rot64]{Rota}
Gian-Carlo Rota, \emph{{On the foundations of combinatorial theory I. Theory of
  M{\"o}bius functions}}, Zeitschrift f{\"u}r Wahrscheinlichkeitstheorie und
  verwandte Gebiete \textbf{2} (1964), no.~4, 340--368.

\bibitem[tD74]{Dieck}
Tammo tom Dieck, \emph{On the homotopy type of classifying spaces}, Manuscripta
  Math. \textbf{11} (1974), 41--49. \MR{350730}

\bibitem[Wal85]{WaldhausenSource}
Friedhelm Waldhausen, \emph{{Algebraic K-theory of spaces}}, Algebraic and
  geometric topology, Springer, 1985, pp.~318--419.

\end{thebibliography}

\vspace{.5cm}

\end{document}